\newtheorem{thm}{Theorem}[section]
\newtheorem{lemma}[thm]{Lemma}
\newtheorem{cor}[thm]{Corollary}
\newtheorem{prop}[thm]{Proposition}
\theoremstyle{definition}
\newtheorem{remk}[thm]{Remark}
\newtheorem{exam}[thm]{Example}
\newtheorem{conj}{Conjecture}
\newtheorem*{thm*}{Theorem}
\newtheorem*{prop*}{Proposition}
\newcommand{\hide}[1]{}
\newcommand{\cyc}[1]{\mathbb{Z}/{#1}}
\newcommand{\Un}[1]{\mathbf{1}_{#1}}
\newcommand{\set}[1]{\left\{#1\right\}}
\newcommand{\diag}[1]{\mathrm{diag}\left(#1\right)}
\newcommand{\norm}[1]{\left\Vert#1\right\Vert}
\newcommand{\cA}{\mathcal{A}}
\newcommand{\cB}{{\mathcal{B}}}
\newcommand{\cBp}{{\widehat{\cB}^+}}
\newcommand{\cK}{{\mathcal{K}}}
\newcommand{\cD}{{\mathcal{D}}}
\newcommand{\cC}{{\mathcal{C}}}
\newcommand{\cN}{{\mathcal{N}}}
\newcommand{\cM}{{\mathcal{M}}}
\newcommand{\cP}{{\mathcal{P}}}
\newcommand{\cS}{{\mathcal{S}}}
\newcommand{\cQ}{{\mathcal{Q}}}
\newcommand{\cU}{{\mathcal{U}}}
\newcommand{\cV}{{\mathcal{V}}}
\newcommand{\bR}{{\mathbf{R}}}
\newcommand{\bC}{{\mathbf{C}}}
\newcommand{\bK}{{\mathbf{K}}}
\newcommand{\dC}{{\mathbb{C}}}
\newcommand{\dN}{{\mathbb{N}}}
\newcommand{\dQ}{{\mathbb{Q}}}
\newcommand{\dR}{{\mathbb{R}}}
\newcommand{\dT}{{\mathbb{T}}}
\newcommand{\dZ}{{\mathbb{Z}}}
\newcommand{\gog}{\mathfrak{g}}
\newcommand{\goh}{\mathfrak{h}}
\newcommand{\gp}{\mathfrak{p}}
\newcommand{\gP}{\mathfrak{P}}
\newcommand{\gA}{\mathfrak{A}}
\newcommand{\gX}{\mathfrak{X}}
\newcommand{\ov}{\overline}
\newcommand{\Her}{\operatorname{Hom}}
\newcommand{\Id}{\operatorname{Id}}
\newcommand{\Gal}{\operatorname{Gal}}
\newcommand{\Lin}{{\operatorname{Lin}}}
\newcommand{\Orb}{{\operatorname{Orb}}}
\newcommand{\Res}{{\operatorname{Res}}}
\newcommand{\supp}{\operatorname{supp}}
\newcommand{\tr}{\mathfrak{tr}}
\newcommand{\id}{\operatorname{id}}
\newcommand{\img}{\mathbf{i}}
\newcommand{\eps}{\varepsilon}
\def\RR{\mathbb{R}}
\def\QQ{\mathbb{Q}}
\def\ZZ{\mathbb{Z}}
\def\NN{\mathbb{N}}
\def\eps{\varepsilon}
\def\id{\mathrm{id}}
\def\A{\mathcal{A}}
\def\cB{\mathcal{B}}
\DeclareMathOperator\res{Res}
\DeclareMathOperator\ind{Ind}
\DeclareMathOperator\Ind{Ind}
\DeclareMathOperator\lc{lc}
\newtheorem{comment}{Comment}
\begin{document}

\author{Jaka Cimpri\v c}
\address{University of Ljubljana, Faculty of Mathematics and Physics, Department of Mathematics, Jadranska 21, SI-1000 Ljubljana, Slovenija}
\email{Jaka.Cimpric@fmf.uni-lj.si}

\author{Yurii Savchuk}
\address{FAU Erlangen-N\"urnberg, Department Mathematik, Cauerstr. 11, 91058 Erlangen, Germany}
\email{savchuk@math.fau.de, savchuk@math.uni-leipzig.de}

\subjclass[2000]{14P99, 14A22, 47Lxx, 13J25, 06F25, 12J15}

\keywords{Quadratic module, involution, induced representation, Weyl algebra, central simple algebra, real field, Galois extension}

\title{Induced quadratic modules}

\date{\today}

\begin{abstract}
Positivity in $\ast$-algebras can be defined either algebraically, by quadratic modules, or analytically, by $\ast$-re\-pre\-sen\-ta\-ti\-ons. 
By the induction procedure for $\ast$-representations we can lift the analytical notion of positivity from a $\ast$-subalgebra to the entire $\ast$-algebra.
The aim of this paper is to define and study the induction procedure for quadratic modules. The main question is when a given quadratic module on the $\ast$-algebra 
is induced from its intersection with the $\ast$-subalgebra. This question is very hard even for the smallest quadratic module 
(i.e. the set of all sums of hermitian squares) and will be answered only in very special cases.
\end{abstract}

\maketitle

\section{Introduction}

Non-commutative real algebraic geometry studies real and complex associative algebras with involution from a geometric perspective.
Hermitian elements are considered as non-commutative real polynomials and ``well-behaved'' (not necessarily bounded) 
$\ast$-representatons as non-commutative real points, see \cite{sch2}. For every finite set $S$ of hermitian elements one tries 
to understand the set  of all hermitian elements that are positive (semi-)definite in every well-behaved  $\ast$-representation
in which all elements from $S$ are positive semi-definite. An algebraic description of this set is called a Positivstellensatz for $S$. 
They are well-understood in the commutative case, see \cite{mar} for a survey, but in the non-commutative case they are very difficult to find. 
Most algebras with involution for which Positivstellens\" atze are known (e.g. Weyl algebras, quantum polynomials, matrix polynomials,
central simple algebras) have the property that their well-behaved $\ast$-representations are induced 
from one-dimensional $\ast$-representations on an appropriate commutative subalgebra.
However, the quest for general ``Induced Positivstellens\" atze'' is still on.

This paper addresses only a small part of the big problem. We define and study the induction procedure for various order-theoretic structures 
(i.e. quadratic modules, preorderings, orderings) related to Positivstellens\" atze. 

In Section \ref{secprelim} we discuss the relationship between $\ast$-representations and quadratic modules whose special case is 
the relationship between bounded $\ast$-representations and archimedean preorderings. We recall the definition of a bimodule projection 
from a $\ast$-algebra to its $\ast$-sub\-al\-geb\-ra and the corresponding induction procedure for $\ast$-representa\-ti\-ons.

In Section \ref{secindmod} we extend the induction procedure from $\ast$-representa\-ti\-ons to quadratic modules. 
Let $\cB$ be a $\ast$-subalgebra of a $\ast$-algebra $\cA$ and let $p \colon \cA \to \cB$ be a bimodule projection 
(i.e. a $\cB$-$\cB$ bimodule map which commutes with the involution and preserves identity). For every quadratic module $\cN$ in $\cB$ we define the set
$$\ind\cN:=\set{a\in\cA \mid a=a^\ast \mbox{ and } p(x^*ax)\in\cN\ \mbox{for all}\ x\in\cA}$$
which is a quadratic module in $\cB$ if and only if it contains $\Un{}$. If $\cN$ is a preordering, then $\ind \cN$ 
need not be a preordering even if it contains $\Un{}$. For every quadratic module $\cM$ in $\cA$ the set 
$$\res \cM := \{p(m) \mid m \in M\}$$

is a quadratic module in $\cB$ and the following questions make sense:
\begin{enumerate}
\item[(Q1)] Is $\cM=\ind \res \cM$?
\item[(Q2)] Is $\res \cM=\cM \cap \cB$? 
\item[(Q3)] Is $\cM$ generated by $\cM \cap \cB$?
\end{enumerate}
These questions are surprisingly hard even for very special $p$ and $\cM$.

The bimodule projections that we are interested in come either from group actions 
via $\ast$-automorphisms (we discuss matrix $\ast$-algebras, regular functions on affine varietes, Galois extensions of fields)
or group gradings (we discuss group algebras, cyclic algebras, quantum polynomials, Weyl algebra). 
Interesting quadratic modules include $\sum \cA^2$, the set of all sums of hermitian squares, and $\cA_+$, 
the set of all hermitian elements that are positive semi-definite in all well-behaved $\ast$-representations.

For example, let $p$ be the ``natural'' bimodule projection from the Weyl algebra
$\cA=\dC\langle a, a^\ast \mid a a^\ast-a^\ast a=1\rangle$
to its subalgebra $\cB=\dC[a^*a]$. We consider the Fock-Bargmann $\ast$-representation
as the only well-behaved $\ast$-representation.
For $\cA_+$, the answers to (Q1) and (Q2) turn out to be positive, while the answer to (Q3)
turns out to be negative. For $\sum \cA^2$, (Q1) is still open while the answers to (Q2)
and (Q3) are trivially positive. For other $\ast$-algebras we have similar situations.

Finally, in the Appendix we extend our construction of $\ind$ and $\res$ of quadratic modules
from bimodule projections to rigged $\cA$-$\cB$ bimodules.

\section{Preliminaries}\label{secprelim}

In this section we explain the relationship between quadratic modules and $\ast$-representations.
We also recall the construction of an induced $\ast$-representation via a bimodule projection.

\subsection{Quadratic modules}

Let $\cA$ be a $\ast$-algebra over a field $F$. This means that $\cA$ is an associative unital algebra over $F$ 
and $\cA$ is equipped with an involution $\ast$ such that $F^\ast \subseteq F$ ($F$ is identified with $F \cdot \Un{\cA}$). 
We will denote by $\cA_h$ the set $\{a \in \cA \mid a^\ast=a\}$ of all hermitian elements of $\cA$. 
In the following we will also assume that the field $F_h:=F \cap \cA_h$ is equipped with an ordering $\ge$ 
such that $f f^\ast \ge 0$ for every $f \in F$. (For $F=\dC$ this assumption implies that $\ast|_\dC$ is the conjugation. )

A subset $\cM\subseteq\cA_h$ is called a \emph{quadratic module (q.m.)} in $\cA$ if the following axioms are satisfied:
\begin{enumerate}
\item[(QM1)]\quad $\cM+\cM \subseteq \cM$ and $\lambda\cM\subseteq\cM$ for all $\lambda \in F^{\ge 0},$
\item[(QM2)]\quad $c^*\cM c \subseteq \cM$ for every $c \in \cA$,
\item[(QM3)]\quad $\Un{\cA}\in\cM.$
\end{enumerate}
We say that $\cM$ is \emph{proper} if $-\Un{\cA}\not\in \cM.$ For a subset $S\subseteq\cA_h$ 
we denote by $QM_\cA(S)$ the q.m. generated by $S,$ that is 
$$QM_\cA(S)=\{\sum_{i=1}^m \lambda_i a_i^*s_i^{}a_i^{}\mid s_i\in S,\ a_i\in\cA,\ \lambda_i \in F^{\ge 0}\}.$$
The set $\sum \cA^2=QM_\cA(\set{\Un{\cA}})$ is the smallest quadratic module in $\cA$. In all our examples $\sum \cA^2$ is proper. 
The largest q.m. in $\cA$ is the set of all hermitian elements $\cA_h.$ It is never proper. 

\begin{remk}
Note that for $F=\QQ$, $F=\dR$ and $F=\dC$ the second part of (QM1) follows from (QM2) and (QM3), so it can be omitted.
In particular, we can also omit $\lambda_i$ in the definition of $QM_\cA(S)$.
\end{remk}

A quadratic module $\cM$ in $\cA$ is \emph{archimedean} if for every $a \in \cA_h$ there is $n \in \NN$
such that $n \Un{\cA}-a \in \cM$. It is a \emph{preordering} if for every $x,y \in \cM$ such that
$xy=yx$ we have that $xy \in \cM$. Note that $F^{\ge 0}$ is a preordering. 
For $F=\QQ$, $F=\dR$ and $F=\dC$, $F^{\ge 0}$ is also archimedean.

\subsection{$\ast$-Representations}\label{astrep}

Let $\cV$ be an inner product space over $F$ and let $L(\cV)$ be the set of all linear operators on $\cV.$
A non-zero algebra homomorphism $\pi:\cA\to L(\cV)$ is called a \emph{$*$-representation} of $\cA$ on $\cV$ 
if $\langle\pi(b)v,w\rangle=\langle v,\pi(b^*)w\rangle$ and $\pi(\Un{\cA})v=v$ for all $v,w\in\cV.$ 
We will also write $\cD(\pi)$ for $\cV.$ 
A $*$-representation is called \emph{cyclic} if it has a \textit{cyclic vector},
i.e. if there is $v\in\cD(\pi)$ such that $\set{\pi(a)v\mid a\in\cA}=\cD(\pi)$.

Let $\pi$ be a $*$-re\-pre\-sen\-tation of $\cA$ on $\cV.$ One easily checks that
$$
QM_\cA(\pi):=\set{a\in\cA_h\ \mid\ \langle \pi(a)v,v\rangle\geq 0 \ \mbox{for all}\ v\in\cV}
$$
is a q.m. in $\cA.$ Note that $QM_\cA(\pi)$ is archimedean if and only if $\pi$ is a bounded $\ast$-representation 
(i.e. $\pi(a)$ is bounded for every $a \in \cA$). If $QM_\cA(\pi)$ is archimedean, it is also a preordering.

A non-zero linear functional $\varphi$ on $\cA$ is called \emph{positive} if $\varphi(\sum\cA^2)\geq 0$. 
It is called \emph{hermitian} if $\varphi(a^\ast)=\varphi(a)^\ast$ for every $a \in \cA$.
By the GNS-construction (see \cite[Section 8.6]{s_book}) for each positive hermitian linear functional $\varphi$ 
there exists a cyclic $*$-representation $\pi_\varphi$ with a cyclic vector $\xi_\varphi\in\cD(\pi_\varphi)$ 
such that $\varphi(a)= \langle \pi_\varphi(a)\xi_\varphi,\xi_\varphi \rangle$ for every $a \in A$. 

Every $*$-algebra $\cA$ over $\dC$ will be considered with the finest locally convex topology (defined by the family of all seminorms on $\cA$). 
Closed quadratic modules in $\cA$ play an important role in the real algebraic geometry, see \cite{mar,sch2}.

\begin{prop}\label{prop_closed_qm}
A quadratic module $\cM$ in a complex $\ast$-algebra $\cA$ is closed if and only if $\cM=QM_\cA(\pi)$ for some $*$-representation $\pi$ of $\cA.$
\end{prop}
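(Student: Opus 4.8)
The plan is to prove the two implications separately.

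For the easy direction, suppose $\cM = QM_\cA(\pi)$ for some $\ast$-representation $\pi$ on an inner product space $\cV$. I would show this set is closed in the finest locally convex topology on $\cA$. For each fixed $v \in \cV$, the map $a \mapsto \langle \pi(a)v, v\rangle$ is a linear functional on $\cA$, hence continuous (every linear functional is continuous in the finest locally convex topology). Therefore $\{a \in \cA_h \mid \langle \pi(a)v,v\rangle \ge 0\}$ is the preimage of $F^{\ge 0}$ (a closed set, since on $\dC$ the ordering of $\dR$ gives a closed half-line) intersected with the closed subspace $\cA_h$, so it is closed. Then $QM_\cA(\pi) = \bigcap_{v \in \cV} \{a \in \cA_h \mid \langle \pi(a)v,v\rangle \ge 0\}$ is an intersection of closed sets, hence closed.

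For the harder direction, suppose $\cM$ is a closed quadratic module. The idea is to realize $\cM$ as $QM_\cA(\pi)$ by taking $\pi$ to be a direct sum of GNS representations. For every $a \in \cA_h \setminus \cM$, since $\cM$ is a closed convex cone not containing $a$, the Hahn–Banach separation theorem (applicable because $\cA$ with the finest locally convex topology is locally convex and $\cM$ is closed and convex) yields a continuous $\dR$-linear functional $\ell$ on $\cA$ with $\ell(\cM) \ge 0$ and $\ell(a) < 0$. One then extends $\ell$ to a hermitian $\dC$-linear functional $\varphi$ on $\cA$ by $\varphi(x) = \ell(\tfrac{x+x^\ast}{2}) + i\,\ell(\tfrac{x-x^\ast}{2i})$; since $\Un{} \in \cM$ one checks $\varphi$ is nonzero, and since $\sum \cA^2 \subseteq \cM$ one gets $\varphi(\sum\cA^2) \ge 0$, so $\varphi$ is a positive hermitian functional. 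The GNS construction gives a cyclic $\ast$-representation $\pi_\varphi$ with cyclic vector $\xi_\varphi$ such that $\varphi(x) = \langle \pi_\varphi(x)\xi_\varphi, \xi_\varphi\rangle$; in particular $a \notin QM_\cA(\pi_\varphi)$ because $\langle \pi_\varphi(a)\xi_\varphi,\xi_\varphi\rangle = \varphi(a) = \ell(a) < 0$, while $\cM \subseteq QM_\cA(\pi_\varphi)$ since $\varphi(\cM) = \ell(\cM) \ge 0$ and, more carefully, $\langle \pi_\varphi(m)\pi_\varphi(c)\xi_\varphi, \pi_\varphi(c)\xi_\varphi\rangle = \varphi(c^\ast m c) \ge 0$ for all $c$ because $c^\ast m c \in \cM$. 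Finally, let $\pi := \bigoplus_a \pi_{\varphi_a}$ be the direct sum over all $a \in \cA_h \setminus \cM$. Then $QM_\cA(\pi) = \bigcap_a QM_\cA(\pi_{\varphi_a}) \supseteq \cM$, and no $a \notin \cM$ lies in $QM_\cA(\pi)$ since it is excluded by the $a$-th summand; hence $QM_\cA(\pi) = \cM$.

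The main obstacle is the separation step and the passage from the separating real functional back to a \emph{hermitian} positive $\dC$-linear functional that still separates $a$ from $\cM$: one must check that the real functional vanishes on $i(\cA_h)^{\perp}$-type obstructions, or rather argue directly that the hermitianization $\varphi$ above genuinely satisfies $\varphi(a) = \ell(a)$ for $a \in \cA_h$ and $\varphi(\sum\cA^2)\ge 0$; the subtlety is that separation only gives $\ell \ge 0$ on $\cM$, and one needs $\cM$ to consist of self-adjoint elements (which it does, by definition of a quadratic module) so that $\ell|_{\cA_h}$ controls $\varphi$ on all of $\cM$. A secondary point to be careful about is that the direct sum of GNS representations is again a $\ast$-representation on an inner product space — here one should take the algebraic direct sum of the domains $\cD(\pi_{\varphi_a})$, on which each operator acts componentwise, and the inner product is the orthogonal sum; this is a legitimate $\ast$-representation in the sense defined in Section~\ref{astrep}, and the formula $QM_\cA(\bigoplus_a \pi_{\varphi_a}) = \bigcap_a QM_\cA(\pi_{\varphi_a})$ is then immediate from the definition.
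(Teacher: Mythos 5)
Your proof is correct and follows essentially the same route as the paper's: the forward direction writes $QM_\cA(\pi)$ as the intersection $\bigcap_{v}\{a\in\cA_h \mid \langle\pi(a)v,v\rangle\ge 0\}$ of closed sets, and the backward direction separates each $a\in\cA_h\setminus\cM$ from $\cM$ by a real-linear functional, passes to the hermitian $\dC$-linear extension, applies GNS, and takes the direct sum over all such $a$. The paper states the separation directly on the real vector space $\cA_h$ and is terser about the hermitianization and the positivity check; your version spells out exactly the points it leaves implicit (that $\varphi$ is nonzero since $\ell(a)<0$, that $\sum\cA^2\subseteq\cM$ gives positivity, that $c^\ast m c\in\cM$ yields $\cM\subseteq QM_\cA(\pi_\varphi)$, and that the algebraic orthogonal direct sum of GNS domains is again a $\ast$-representation), which is a sound and careful filling-in rather than a different proof.
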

\begin{proof}
Let $\pi$ be a $*$-representation of $\cA$ on an inner-product space $\cV.$ For each $v\in\cV$ denote by $\cA_v$ the set 
$\set{a\in\cA_h\ \mid\ \langle\pi(a)v,v\rangle\geq 0}.$ The set $\cA_v$ is closed, thus $QM_\cA(\pi)=\cap_{v\in\cV}\cA_v$ is closed.

Let $\cM\subseteq\cA_h$ be a closed q.m. and let $a\in\cA_h\backslash\cM.$ By the separation theorem for closed convex sets
there exists a $\dR$-linear functional $\varphi_a$ on $\cA_h$ such that $\varphi_a(\cM)\geq 0$ and $\varphi_a(a)<0.$ 
We denote the corresponding $\dC$-linear functional on $\cA$ by $\varphi_a.$ Let $\pi_a$ be the GNS representation of 
$\varphi_a$ and let $\pi:=\oplus_{a}\pi_a.$ Then $\cM=QM_\cA(\pi)$ by construction.
\end{proof}

\subsection{Inclusion between quadratic modules}

This subsection settles an important basic question of non-commutative real algebraic geometry. It will only be used in Example \ref{discex}. 

Let $\rho$ and $\pi$ be $*$-representations of a complex $*$-algebra $\cA.$ We say that $\rho$ \emph{is weakly contained in} 
$\pi$ if every functional $\omega_{\rho,\psi}(a):=\langle \rho(a)\psi,\psi \rangle$
can be weakly approximated by functionals $\sum_{i=1}^m\omega_{\pi,\xi_i}.$ 
That is, for arbitrary $\varepsilon>0$ and $a_1,\dots,a_n\in\cA$ there exist $\xi_1,\xi_2,\dots,\xi_m\in\cD(\pi)$ such that
\begin{gather}\label{eq_weak_appr}
|\omega_{\rho,\psi}(a_j)-\sum_{i=1}^m\omega_{\pi,\xi_i}(a_j)|<\varepsilon,\ \mbox{for all}\ j=1,\dots,n.
\end{gather}

\begin{prop}\label{prop_weakly}
A $*$-representation $\rho$ is weakly contained in $\pi$ if and only if $QM_\cA(\pi)\subseteq QM_\cA(\rho).$
\end{prop}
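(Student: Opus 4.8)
The plan is to prove both implications directly from the definitions, using the GNS construction as the bridge between functionals and cyclic $*$-representations. First suppose $QM_\cA(\pi) \subseteq QM_\cA(\rho)$; I want to show $\rho$ is weakly contained in $\pi$. It suffices to approximate a single cyclic functional $\omega_{\rho,\psi}$, since an arbitrary $*$-representation is a direct sum of cyclic ones and approximating each cyclic summand glues together. So fix a cyclic vector $\psi \in \cD(\rho)$ and consider $\varphi := \omega_{\rho,\psi}$, which is a positive hermitian functional. The key observation is that $\varphi$ vanishes on $-QM_\cA(\pi)$ in the sense that $\varphi(a) \ge 0$ whenever $a \in QM_\cA(\rho)$, hence whenever $a \in QM_\cA(\pi)$ by the assumed inclusion. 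I would then argue that $\varphi$ lies in the weak-$*$ closure of the cone of functionals of the form $\sum_{i=1}^m \omega_{\pi,\xi_i}$: if not, by the separation theorem there is a real-linear functional on $\cA_h$, i.e. an element (suitably interpreted) $a \in \cA_h$, that is nonnegative on all $\omega_{\pi,\xi}$ but negative on $\varphi$; nonnegativity on every $\omega_{\pi,\xi}$ says exactly $a \in QM_\cA(\pi) \subseteq QM_\cA(\rho)$, so $\varphi(a) = \langle \rho(a)\psi,\psi\rangle \ge 0$, a contradiction. Unwinding the weak-$*$ approximation gives precisely inequality \eqref{eq_weak_appr}.

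For the converse, suppose $\rho$ is weakly contained in $\pi$, and let $a \in QM_\cA(\pi)$; I must show $a \in QM_\cA(\rho)$, i.e. $\langle \rho(a)v,v\rangle \ge 0$ for every $v \in \cD(\rho)$. Fix such a $v$. Applying the weak-containment hypothesis to the single element $a_1 = a$ and to the cyclic functional $\omega_{\rho,v}$ (restricting $\rho$ to the cyclic subspace generated by $v$, or directly), for every $\varepsilon > 0$ there are $\xi_1,\dots,\xi_m \in \cD(\pi)$ with $|\langle \rho(a)v,v\rangle - \sum_{i=1}^m \langle \pi(a)\xi_i,\xi_i\rangle| < \varepsilon$. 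Since $a \in QM_\cA(\pi)$, each term $\langle \pi(a)\xi_i,\xi_i\rangle \ge 0$, so the whole sum is $\ge 0$, whence $\langle \rho(a)v,v\rangle > -\varepsilon$. Letting $\varepsilon \to 0$ gives $\langle \rho(a)v,v\rangle \ge 0$, as desired.

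The main obstacle I anticipate is the separation-theorem step in the first direction: one must be careful that the relevant cone of functionals $\{\sum_{i} \omega_{\pi,\xi_i}\}$ is a genuine convex cone (it is, since sums and positive scalings of such functionals are again of this form — scaling $\xi_i$ by $\sqrt{\lambda}$) and that its weak-$*$ closure can be separated from a point not in it by a functional that is itself evaluation at an element of $\cA_h$. This is exactly the same duality used in the proof of Proposition \ref{prop_closed_qm}, where real-linear functionals on $\cA_h$ positive on a closed quadratic module are realized via GNS; here the roles are dualized, and the finest locally convex topology on $\cA$ ensures the dual is well-behaved. A minor point to handle cleanly is the reduction from general $\rho$ to cyclic $\rho$: decompose $\cD(\rho) = \bigoplus_k \cD(\rho_k)$ into cyclic subrepresentations, approximate $\omega_{\rho,\psi}$ for $\psi$ with finitely many nonzero components by summing the approximants for each component, and note that a general $\psi$ is approximated by such finitely-supported ones, so \eqref{eq_weak_appr} for all of $\rho$ follows.
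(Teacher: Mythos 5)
Your proof is correct and is essentially the paper's argument: the trivial direction is identical, and your direct separation of the cone $\{\sum_i \omega_{\pi,\xi_i}\}$ from the point $\omega_{\rho,\psi}$ in the dual pair $\langle \cA_h, \cA_h'\rangle$ is the same Hahn--Banach duality that the paper packages as the bipolar theorem applied to the polar of $\cP(\pi)$. One small remark: the reduction to cyclic $\psi$ at the start is unnecessary, since your separation argument applies verbatim to an arbitrary $\psi \in \cD(\rho)$.
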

\begin{proof}
Let $\rho$ be weakly contained in $\pi$ and let $a\in QM_\cA(\pi).$ Then $\omega_{\pi,\xi}(a)\geq 0$ for all $\xi\in\cD(\pi)$ and by (\ref{eq_weak_appr}) we have $\omega_{\rho,\psi}(a)\geq 0$ for all $\psi\in\cD(\rho).$ Hence $a\in QM_\cA(\rho),$ which proves the ``only if'' direction.

To prove the ``if'' direction, we use some basics from the theory of locally convex spaces. Let $\cP(\pi)$ be the convex hull of $\set{\omega_{\pi,\xi}\mid\norm{\xi}=1}.$ Then for $\varphi\in\cP(\pi)$ we have $\varphi(\Un{})=1$ and 
$$
QM_\cA(\pi)=\set{a\in\cA_h\mid\varphi(a)\geq 0,\ \forall\varphi\in\cP(\pi)}.
$$
Consider $\cA_h$ as an $\dR$-vector space and let $\cA_h'$ be the space of all linear functionals on $\cA_h,$ so that $\langle\cA_h,\cA_h'\rangle$ is a dual pair. Consider $\cP(\pi)$ as a subset of $\cA_h'$ and let
$$\cP(\pi)^\circ=\set{a\in\cA_h\mid\varphi(a)\leq 1,\ \forall\varphi\in\cP(\pi)}$$
be its polar set in $\cA_h.$ Fix an arbitrary $\omega_{\rho,\psi},\ \psi\in\cD(\rho)$ and assume without loss of generality that $\norm{\psi}=1.$ Then for $a\in\cA_h$ we have
\begin{gather*}
a\in\cP(\pi)^\circ\Rightarrow\forall\varphi\in\cP(\pi)\ \varphi(a)\leq 1\Rightarrow\forall\varphi\in\cP(\pi)\ \varphi(\Un{}-a)\geq 0\Rightarrow\\
\Rightarrow  \Un{}-a\in QM_\cA(\pi)\Rightarrow \Un{}-a\in QM_\cA(\rho) \Rightarrow \omega_{\rho,\psi}(a)\leq 1,
\end{gather*}
that is $\omega_{\rho,\psi}$ is contained in $(\cP(\pi)^{\circ})^\circ.$ By the bipolar theorem $\omega_{\rho,\psi}$ is weakly approximated by the elements of $\cP(\pi).$
\end{proof}

\subsection{Bimodule projections}

Let $\cB\subseteq\cA$ be $*$-algebras over a field $F$. A mapping $p \colon \cA\to \cB$ 
is said to be a \emph{bimodule projection}
if it satisfies the following properties:
\begin{enumerate}
\item[(CE1)] $p(a_1+a_2)=p(a_1)+p(a_2)$ for every $a_1,a_2 \in \cA$,
\item[(CE2)] $p(b_1 a b_2)=b_1 p(a) b_2$ for every $b_1,b_2 \in \cB$ and $a \in \cA$,
\item[(CE3)] $p(a^\ast)=p(a)^\ast$,
\item[(CE4)] $p(\Un{\cA})=\Un{\cB}$.
\end{enumerate}
A bimodule projection is a \emph{conditional expectation} if it satisfies
\begin{enumerate}
\item[(CE5)] $p(\sum \cA^2) \subseteq \sum \cA^2$ (or equivalently $p(\sum \cA^2) = \sum \cA^2 \cap \cB$.)
\end{enumerate}
We will not need properties (CE4) and (CE5) until subsection \ref{subsecres}.

\begin{exam}
Every hermitian linear functional $\phi \colon \cA \to F$ satisfies (CE1)-(CE3) and $\frac{1}{\phi(\Un{})} \, \phi$ is a bimodule projection. If also
$\phi$ is positive and $F^{\ge 0}=\sum F^2$, then $\frac{1}{\phi(\Un{})} \, \phi$
is a conditional expectation.
\end{exam}

\noindent Many more conditional expectations will be given in Sections \ref{subsect_aver} and \ref{subsect_grad}.

\begin{exam}
Let $\gog$ be the Heisenberg Lie algebra, i.e. the real Lie algebra with generators $a,b,c$, relations $[a,b]=c,\ [a,c]=[b,c]=0$ 
and involution $a^*=-a,\ b^*=-b,\ c^*=-c$, and let $\goh$ be its Lie subalgebra generated by $a,c$. Further, let $\cA$ and $\cB$ 
denote the universal enveloping algebras $\cU(\gog)$ and $\cU(\goh)$ respectively. We claim that there exists no bimodule projection from $\cA$ to $\cB.$ 
Assume to the contrary, that $p:\cA\to\cB$ is a bimodule projection. Then $c=p(c)=p(ab-ba)=ap(b)-p(b)a=0,$ by commutativity of $\cB.$ A contradiction.
\end{exam}

\subsection{Induced $\ast$-representations}

Let $\cB\subseteq\cA$ be $*$-algebras over a field $F$ and let $p:\cA\to\cB$ be a bimodule projection. 
Further, let $\rho$ be a $*$-re\-pre\-sen\-tation of $\cB$ on an inner-product space $\cV.$ Let us briefly recall 
the construction of the induced representation $\ind\rho$, see \cite{ss}. Let $\cA\otimes_\cB\cV$ denote 
the quotient space of $\cA\otimes\cV$ by the linear hull of $\set{ab\otimes v-a\otimes\rho(b)v,\ a\in\cA,\ b\in\cB,\ v\in\cV}.$ 
Note that $\cA\otimes_\cB\cV$ is an induced $\cA$-module in the sense of \cite{higman}. A representation $\rho$ is called \emph{inducible} 
if the sesquilinear form $\langle\cdot,\cdot\rangle_0$ on $\cA\otimes_\cB\cV$ defined by
$$
\langle\sum_i x_i\otimes v_i,\sum_j y_j\otimes w_j\rangle_0:=\sum_{i,j}\langle\rho(p(y_j^*x_i^{})) v_i,w_j\rangle,\ x_i,y_j\in\cA,\ v_i,w_j\in\cV
$$
is positive semi-definite. The kernel $\cN_\rho$ of the form $\langle\cdot,\cdot\rangle_0$ is invariant under the action of $\cA$ on $\cA\otimes_\cB\cV.$ 
For $a\otimes v\in\cA\otimes_\cB\cV$ denote by $[a\otimes v]\in\cA\otimes_\cB\cV/\cN_\rho$ the image under the quotient mapping. 
Thus, $\cA\otimes_\cB\cV/\cN_\rho$ is an inner-product space and there is a well-defined $*$-representation $\pi=\Ind\rho$ of $\cA$ on the unitary space $\cA\otimes_\cB\cV/\cN_\rho$ which acts by
$$
\pi(a)(\sum_i[x_i\otimes v_i]):=\sum_i[ax_i\otimes v_i].
$$

\begin{exam}\label{gnsind}
Let $\phi$ be a hermitian linear functional from $\cA$ to $F$.
The left regular representation $\lambda$ of $F$ is inducible if and only if $\phi$ is positive.
In this case $\ind \lambda=\pi_\phi$, i.e. the $\ast$-representation associated to $\phi$ by the 
GNS construction. Note also that $QM_F(\lambda)=F^{\ge 0}$ and
$
QM_\cA(\pi_\varphi)=\set{a\in\cA_h\ \mid\ \varphi(x^\ast a x) \geq 0 \ \mbox{for all}\ x \in\cA}.
$
\end{exam}

\section{Induced quadratic modules}\label{secindmod}

\subsection{Definition}
Let $\cB\subseteq\cA$ be a $*$-algebras over a field $F$. Motivated by Example \ref{gnsind}
we define for every quadratic module $\cN\subseteq\cB$ 
\begin{gather*}
\Ind\cN:=\set{a\in\cA_h \mid p(x^*ax)\in\cN\ \mbox{for all}\ x\in\cA}.
\end{gather*}
The set $\Ind\cN$ is called the \emph{induction of $\cN$ from $\cB$ to $\cA$ via $p$}.
We will also write $\Ind_{\cB\uparrow\cA}\cN$ or $\Ind^p\cN$ for $\Ind\cN$.

\begin{prop}\label{indmoddef}
Let $\cN\subseteq\cB$ be a q.m. Then
\begin{enumerate}
  \item[(i)] the set $\Ind\cN$ is a quadratic module if and only if
\begin{gather}\label{eq_indcb1}
\set{p(x^*x), x\in\cA}\subseteq\cN,
\end{gather}
  \item[(ii)] if $(\ref{eq_indcb1})$ is satisfied, then $\Ind\cN$ is proper if and only if $\cN$ is proper,
  \item[(iii)] if $(\ref{eq_indcb1})$ is satisfied, then $\Ind\cN$ is the largest q.m. contained in $p^{-1}(\cN).$
\end{enumerate}
\end{prop}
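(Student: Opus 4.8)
The plan is to verify the three parts in the order (i), (ii), (iii), since each builds on the previous one. For part (i), the ``only if'' direction is immediate: if $\Ind\cN$ is a quadratic module, then $\Un{\cA}\in\Ind\cN$ by (QM3), which by the definition of $\Ind\cN$ says exactly that $p(x^\ast x)=p(x^\ast\Un{\cA}x)\in\cN$ for all $x\in\cA$, i.e.\ \eqref{eq_indcb1} holds. For the ``if'' direction, assume \eqref{eq_indcb1}. We must check (QM1), (QM2), (QM3) for $\Ind\cN$. First, $\Ind\cN\subseteq\cA_h$ by construction. Axiom (QM3) is precisely \eqref{eq_indcb1} applied with $a=\Un{\cA}$, using (CE2) to write $p(x^\ast\Un{\cA}x)=p(x^\ast x)$. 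For (QM1): if $a_1,a_2\in\Ind\cN$ then $p(x^\ast(a_1+a_2)x)=p(x^\ast a_1x)+p(x^\ast a_2x)\in\cN+\cN\subseteq\cN$ by (CE1) and (QM1) for $\cN$; the scaling by $\lambda\in F^{\ge0}$ is analogous since $p(x^\ast(\lambda a)x)=\lambda\,p(x^\ast a x)$ (here $\lambda$ is central). For (QM2): if $a\in\Ind\cN$ and $c\in\cA$, then for every $x\in\cA$ we have $p(x^\ast(c^\ast a c)x)=p((cx)^\ast a (cx))\in\cN$ because $cx\in\cA$; hence $c^\ast ac\in\Ind\cN$. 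This is the only slightly delicate point, but it is really just associativity together with the quantifier ``for all $x\in\cA$'' being closed under left multiplication.

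For part (ii), assume \eqref{eq_indcb1}, so $\Ind\cN$ is a quadratic module by (i). Suppose $\cN$ is not proper, i.e.\ $-\Un{\cB}\in\cN$. Then for every $x\in\cA$, $p(x^\ast(-\Un{\cA})x)=-p(x^\ast x)$; but we need $-\Un{\cB}\in\cN$ to conclude $-\Un{\cA}\in\Ind\cN$ directly, since $p(x^\ast(-\Un{\cA})x)=-p(x^\ast x)$ and $-p(x^\ast x)\in\cN$ follows from $-\Un{\cB}\in\cN$ and $p(x^\ast x)\in\cN$ only if $\cN$ is closed under negation, which holds here because $-\Un{\cB}\in\cN$ implies $-m=(-\Un{\cB})\cdot m\cdots$ — more cleanly, $-p(x^\ast x)\in\cN$ since $\cN$ is a quadratic module containing $-\Un{\cB}$, so $-p(x^\ast x)=p(x^\ast x)\cdot(-\Un{\cB})$ lies in $\cN$ when $\cN$ is a preordering, but in general one argues: $-\Un{\cB}\in\cN$ and $\cN+\cN\subseteq\cN$ force $\cN=\cB_h$, hence $\cN$ contains everything and $\Ind\cN=\cA_h\ni-\Un{\cA}$. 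Conversely, if $\Ind\cN$ is not proper then $-\Un{\cA}\in\Ind\cN$, so $p(x^\ast(-\Un{\cA})x)\in\cN$ for all $x$; taking $x=\Un{\cA}$ gives $p(-\Un{\cA})=-\Un{\cB}\in\cN$ by (CE4), so $\cN$ is not proper. I will need to state the standard fact that a quadratic module $\cN$ with $-\Un{}\in\cN$ equals the whole $\cB_h$; this follows from $a=\frac14((\Un{}+a)^\ast\Un{}(\Un{}+a)-(\Un{}-a)^\ast\Un{}(\Un{}-a))+$ a nonnegative-scalar combination involving $-\Un{}$, or more simply from the identity $a=\bigl(\tfrac{a+\Un{}}{2}\bigr)^\ast\Un{}\bigl(\tfrac{a+\Un{}}{2}\bigr)\cdot 1 - \text{stuff}$; I will phrase it carefully in the write-up.

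For part (iii), again assume \eqref{eq_indcb1}. First, $\Ind\cN\subseteq p^{-1}(\cN)$: if $a\in\Ind\cN$, take $x=\Un{\cA}$ in the defining condition to get $p(a)=p(\Un{\cA}^\ast a\Un{\cA})\in\cN$. Now let $\cM$ be any quadratic module in $\cA$ with $\cM\subseteq p^{-1}(\cN)$; we must show $\cM\subseteq\Ind\cN$. Take $a\in\cM$. For any $x\in\cA$, axiom (QM2) gives $x^\ast ax\in\cM\subseteq p^{-1}(\cN)$, i.e.\ $p(x^\ast ax)\in\cN$. Since this holds for all $x\in\cA$ and $a\in\cA_h$ (as $\cM\subseteq\cA_h$), we get $a\in\Ind\cN$. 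Thus $\Ind\cN$ contains every quadratic module sitting inside $p^{-1}(\cN)$, and since $\Ind\cN$ is itself such a quadratic module by (i), it is the largest one. The main obstacle in the whole argument is really only the bookkeeping in part (ii) around non-properness; parts (i) and (iii) are formal manipulations of the axioms (CE1)–(CE4), (QM1)–(QM3) and the quantifier structure in the definition of $\Ind\cN$.
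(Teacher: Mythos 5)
Your argument is correct and follows essentially the same route as the paper: parts (i) and (iii) are direct verifications from (CE1)--(CE4), (QM1)--(QM3) and the quantifier in the definition of $\Ind\cN$ (the paper simply says ``one easily checks'' / ``follows from the definition''), and part (ii) reduces, as in the paper, to the fact that a quadratic module containing $-\Un{}$ is all of $\cB_h$, whence $\Ind\cB_h=\cA_h$ because $p(x^*ax)$ is always hermitian. When writing up (ii), drop the preordering tangent (the product $p(x^*x)\cdot(-\Un{\cB})$ need not lie in $\cN$ since $\cN$ is not assumed to be a preordering) and instead record the identity
\[
a=\tfrac14(\Un{}+a)^*\Un{}(\Un{}+a)+\tfrac14(\Un{}-a)^*(-\Un{})(\Un{}-a), \qquad a\in\cB_h,
\]
which together with (QM1)--(QM3) and $-\Un{\cB}\in\cN$ yields $\cN=\cB_h$ and hence $\Ind\cN=\cA_h\ni-\Un{\cA}$.
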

\begin{proof}
 (i): One can easily check that $\Ind\cN$ is a subset of $\cA_h$ and (QM1),(QM2) are satisfied. Then $p(x^*x)\in\cN$ for all $x\in\cA$ if and only if $\Un{\cA}\in\Ind\cN,$ i.e. (QM3) is satisfied.

(ii): Clearly, if $\cN=\cB_h$ then $\Ind\cN=\cA_h$ is also not proper. Assume now $\Ind\cN=\cA_h,$ that is $-\Un{\cA}\in\Ind\cN.$ Then $-\Un{\cB}=p(\Un{\cA}^*(-\Un{\cA})\Un{\cA})\in\cN,$ i.e. $\cN$ is not proper.

(iii): Follows from the definition of $\Ind\cN.$
\end{proof}

In the view of Proposition \ref{indmoddef} we say that a q.m. $\cN\subseteq\cB$ is \emph{inducible} 
if and only if (\ref{eq_indcb1}) is satisfied. Equivalently, $\cN$ is inducible if and only if 
$\Ind\cN$ is a q.m.

In general, induction does not respect preorderings:

\begin{exam}
Let $\cA=\dR[x,y]$ and $\cM=QM_\cA(\{x,y\})$. By \cite{mar}, Example 4.1.5 and Theorem 4.1.2, $\cM$ is closed. (The proof of this fact does not use Proposition \ref{prop_closed_qm}.)
A direct computation shows that $xy \not\in \cM$. Hence there exists a linear functional $\varphi:\cA\to\dR$ such that $\varphi(\cM)\geq 0$ and $\varphi(xy)<0.$ For 
$p=\frac{1}{\phi(\Un{})}\phi$ we have that $x,y\in\Ind^p\dR^{\ge 0}$ and $xy\notin\Ind^p\dR^{\ge 0}$. Therefore $\Ind^p\dR^{\ge 0}$ is not a preordering.
\end{exam}

If $\sum \cA^2$ is archimedean then induction respects closed preorderings. 
In view of subsection \ref{astrep}, this will follow from Proposition \ref{prop_ind_rep_mod}.

\subsection{Relation to induced $\ast$-representations}

Let $\cN$ be a closed q.m. in a real or complex $\ast$-algebra $\cB$. By Proposition \ref{prop_closed_qm} 
there exists a $*$-representation $\rho$ of $\cB$ such that $\cN=QM_\cB(\rho).$ 
By the proof of Proposition \ref{prop_closed_qm}, we may assume 
that $\rho$ is a direct sum of cyclic $*$-representations.
Proposition \ref{prop_ind_rep_mod} will imply that $\ind \cN$ is also closed.

\begin{prop}\label{prop_ind_rep_mod}
If $\rho$ is a direct sum of cyclic $\ast$-representations then
$QM_\cB(\rho)$ is inducible if and only if $\rho$ is inducible. Moreover if $QM_\cB(\rho)$ is inducible, then $QM_\cA(\Ind\rho)=\Ind(QM_\cB(\rho)).$
\end{prop}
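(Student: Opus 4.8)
The plan is to unravel both notions of inducibility into statements about positivity of the same sesquilinear expressions and then match them term by term. Write $\rho = \bigoplus_{\alpha} \rho_\alpha$ as a direct sum of cyclic $\ast$-representations, with cyclic vectors $\xi_\alpha \in \cD(\rho_\alpha)$. The key observation is that $\rho$ being inducible means the form $\langle\,\cdot,\cdot\,\rangle_0$ on $\cA \otimes_\cB \cV$ is positive semi-definite, i.e. for all finitely many $x_i \in \cA$ and $v_i \in \cV$ we have $\sum_{i,j}\langle \rho(p(x_j^\ast x_i)) v_i, v_j\rangle \ge 0$. By writing each $v_i$ in terms of the cyclic vectors (so $v_i = \rho(b_{i})\xi_{\alpha(i)}$ up to approximation, or working block by block since the $\rho_\alpha$ are orthogonal) and absorbing $b_i$ into $x_i$ using (CE2), this reduces to the single requirement that $\sum_{i,j}\langle \rho_\alpha(p(x_j^\ast x_i))\xi_\alpha, \xi_\alpha\rangle \ge 0$ for every $\alpha$ and all finite tuples $(x_i)$ — equivalently, that each $\varphi_\alpha := \langle \rho_\alpha(p(-))\xi_\alpha,\xi_\alpha\rangle$ is a positive functional on $\cA$, where "positive" here is tested on sums $\sum x_j^\ast x_i$... which is exactly positivity on $\sum\cA^2$ after diagonalizing.

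First I would make precise this reduction: $\rho$ is inducible iff for every $\alpha$ the hermitian functional $a \mapsto \langle\rho_\alpha(p(a))\xi_\alpha,\xi_\alpha\rangle$ on $\cA$ is positive, i.e. nonnegative on $\sum\cA^2$. On the other hand, $QM_\cB(\rho)$ is inducible means $p(x^\ast x) \in QM_\cB(\rho)$ for all $x \in \cA$, i.e. $\langle\rho(p(x^\ast x))w,w\rangle \ge 0$ for all $w \in \cV$; splitting into blocks and using cyclicity of $\xi_\alpha$, this is equivalent to $\langle\rho_\alpha(p(x^\ast x))\rho_\alpha(b)\xi_\alpha,\rho_\alpha(b)\xi_\alpha\rangle \ge 0$ for all $\alpha$, $b \in \cB$, $x \in \cA$; moving $b$ across via $\rho_\alpha(b)^\ast \rho_\alpha(\cdot)\rho_\alpha(b) = \rho_\alpha(b^\ast \cdot b)$ and (CE2), this becomes $\langle\rho_\alpha(p((xb)^\ast(xb)))\xi_\alpha,\xi_\alpha\rangle \ge 0$, i.e. the functional $a\mapsto\langle\rho_\alpha(p(a))\xi_\alpha,\xi_\alpha\rangle$ is nonnegative on all hermitian squares $y^\ast y$, $y \in \cA$ — the same condition. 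This gives the first sentence of the proposition.

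For the "moreover" part, assume $QM_\cB(\rho)$ is inducible. I would show $QM_\cA(\Ind\rho) = \Ind(QM_\cB(\rho))$ by double inclusion, again using the concrete model of $\Ind\rho$ on $\cA\otimes_\cB\cV/\cN_\rho$. An element $a \in \cA_h$ lies in $QM_\cA(\Ind\rho)$ iff $\langle(\Ind\rho)(a)\zeta,\zeta\rangle_0 \ge 0$ for every $\zeta = \sum_i [x_i\otimes v_i]$; unwinding the definitions of the action and the form, $\langle(\Ind\rho)(a)\zeta,\zeta\rangle_0 = \sum_{i,j}\langle\rho(p(x_j^\ast a x_i))v_i,v_j\rangle$. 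Restricting to the "rank-one" vectors $\zeta = [x\otimes v]$ this is $\langle\rho(p(x^\ast a x))v,v\rangle$, whose nonnegativity for all $x,v$ says exactly $p(x^\ast a x) \in QM_\cB(\rho)$ for all $x$, i.e. $a \in \Ind(QM_\cB(\rho))$; so $QM_\cA(\Ind\rho)\subseteq\Ind(QM_\cB(\rho))$ is immediate. For the reverse, suppose $a \in \Ind(QM_\cB(\rho))$, so $p(x^\ast a x) \in QM_\cB(\rho)$ for every $x \in \cA$; I must show the full quadratic form $\sum_{i,j}\langle\rho(p(x_j^\ast a x_i))v_i,v_j\rangle$ is nonnegative, not just its diagonal-like specializations. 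Here is where the block/cyclic structure does the work: reduce to a single cyclic $\rho_\alpha$ and to $v_i = \rho_\alpha(b_i)\xi_\alpha$, absorb $b_i$ into $x_i$ via (CE2) to reduce to $v_i = \xi_\alpha$ for all $i$, and then observe $\sum_{i,j}\langle\rho_\alpha(p(x_j^\ast a x_i))\xi_\alpha,\xi_\alpha\rangle = \langle\rho_\alpha(p(X^\ast a X))\xi_\alpha,\xi_\alpha\rangle$ after a suitable $2\times2$-to-$1\times1$... no — more honestly, one uses that $X := \sum_i x_i$ does not work directly since the cross terms must be tracked, so instead invoke the standard fact that for a positive functional $\psi$ on a $\ast$-algebra and a fixed hermitian $a$ with $\psi(x^\ast a x)\ge0$ for all $x$, the matrix $(\psi(x_j^\ast a x_i))_{i,j}$ is positive semi-definite (this is the Cauchy–Schwarz / GNS argument: $(x,y)\mapsto \psi(y^\ast a x)$ is a positive semi-definite hermitian form whenever its diagonal is nonnegative, after passing to the GNS space of $\psi$). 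Applying this with $\psi_\alpha = \langle\rho_\alpha(p(-))\xi_\alpha,\xi_\alpha\rangle$, which is positive by inducibility of $QM_\cB(\rho)$, finishes the inclusion.

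The main obstacle is precisely this last point: translating "$p(x^\ast a x)$ is positive for each single $x$" into positivity of the whole Gram-type matrix. The honest route is to build the GNS space of the positive functional $\psi_\alpha$ on $\cA$ — equivalently, to recognize $\cA\otimes_\cB\cV/\cN_\rho$ restricted to the $\alpha$-block-with-cyclic-vector as the GNS space of $\psi_\alpha$ — and inside that Hilbert space the operator "multiplication by $a$" is self-adjoint with $\langle a\hat x,\hat x\rangle = \psi_\alpha(x^\ast a x)\ge0$, so it is a positive operator and hence $\langle a\hat X,\hat X\rangle\ge0$ for every vector $\hat X = \sum_i\hat x_i$, which is the required inequality. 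I would want to be careful that the identification of the $\alpha$-block of $\cA\otimes_\cB\cV/\cN_\rho$ with $\pi_{\psi_\alpha}$ (as in Example \ref{gnsind}) is legitimate, using cyclicity of $\xi_\alpha$ and (CE2) to see that every element $[x\otimes\rho_\alpha(b)\xi_\alpha]$ equals $[xb\otimes\xi_\alpha]$; once that is in place the whole argument collapses to standard GNS positivity, and the two inducibility statements and the identity $QM_\cA(\Ind\rho) = \Ind(QM_\cB(\rho))$ all follow.
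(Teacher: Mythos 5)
Your argument is essentially the paper's: decompose $\rho$ into cyclic pieces, write arbitrary vectors as $\rho(b)\,\xi_\alpha$, absorb the $b$'s into the $x$'s via (CE2), split over orthogonal blocks, and collapse the resulting double sum into a single hermitian square per block. The one place you hesitate --- the ``cross terms'' in the moreover direction --- is in fact already handled by the same bilinear collapse you used for the first half: $\sum_{i,j} x_j^\ast\, a\, x_i = X^\ast a X$ with $X=\sum_i x_i$, so that $\sum_{i,j}\langle\rho_\alpha(p(x_j^\ast a x_i))\xi_\alpha,\xi_\alpha\rangle=\langle\rho_\alpha(p(X^\ast a X))\xi_\alpha,\xi_\alpha\rangle\ge 0$ directly from $a\in\Ind(QM_\cB(\rho))$; the excursion through the GNS space of $\psi_\alpha$ is correct but unnecessary, and the paper indeed does it with the direct algebraic identity (``a calculation similar to \eqref{eq_long}'').
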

\begin{proof}
Let $\cV=\oplus_i\cV_i$ be a decomposition of $\cV$ into orthogonal sum of cyclic components and $v_i\in\cV_i$ be the corresponding cyclic vectors. Assume that $QM_\cB(\rho)$ is inducible, that is $\set{p(a^*a)\ \mid\ a\in\cA}\subseteq QM_\cB(\rho)$ or, equivalently, $\langle\rho(p(a^*a))w,w\rangle\geq 0$ for all $a\in\cA,\ w\in\cV.$ We show that $\rho$ is inducible. For let $w_i\in\cV$ and $a_i\in\cA.$ Then $w_i=\sum_{ij}\rho(b_{ij})v_j$ and we get
\begin{gather}
\nonumber\langle \sum_i a_i\otimes w_i,\sum_i a_i\otimes w_i\rangle_0=\sum_{i,j}\langle\rho(p(a_j^*a_i^{}))w_i,w_j\rangle=\\
\label{eq_long}=\sum_{i,j}\sum_{k,m}\langle\rho(p(a_j^*a_i^{})) \rho(b_{im})v_m,\rho(b_{jk})v_k\rangle=\\
\nonumber=\sum_{k}\sum_{i,j}\langle\rho(p(b_{jk}^*x_j^*x_i^{}b_{ik}^{})) v_k,v_k\rangle=\sum_{k}\langle\rho(p(z_k^*z_k^{})) v_k,v_k\rangle\geq 0,
\end{gather}
where $z_k=\sum_i x_ib_{ik}.$ Thus $\rho$ is inducible. The converse statement is trivial.

Take $a\in\Ind(QM_\cB(\rho)).$ Then for all $x\in\cA$ and $w\in\cV$ holds $\langle \rho(p(x^*ax))w,w\rangle\geq 0.$ A calculation similar to (\ref{eq_long}) shows that
$$\langle\sum_i ax_i\otimes w_i,\sum_i x_i\otimes w_i\rangle_0\geq 0$$
for arbitrary $w_i\in\cV$ and $x_i\in\cA.$ Thus $\Ind\rho(a)$ is positive or, equivalently $a\in QM_\cA(\Ind\rho).$ The reverse inclusion is trivial.
\end{proof}

\subsection{Restriction}\label{subsecres}

Let $\cB\subseteq\cA$ be $*$-algebras and $\pi$ be a $\ast$-representation of $\cA$. The restriction of $\pi$ to $\cB$ is defined by $\res \pi=\pi|_\cB$. 
It is easy to see that $QM_\cB(\res \pi)=QM_\cA(\pi) \cap \cB$. The most natural extension to quadratic modules would be that $\cM \cap \cB$ is considered as the restriction of a quadratic module $\cM$ on $\cA$. However, we will not go this way.

Let $p:\cA\to\cB$ be a fixed bimodule projection. For a quadratic module $\cM$ on $\cA$ we define
\begin{gather*}
\Res\cM:=\set{p(m) \mid m\in\cM}.
\end{gather*}
The set $\Res\cM$ is called the \emph{restriction of $\cM$ onto $\cB$ via $p$}. It can be easily checked that $\Res\cM\subseteq\cB$ is always a q.m. (property (CE4) is used here for the first time), it is always inducible 
and it always contains $\cM \cap \cB$.

The advantage of this definition of $\res \cM$ is that we obtain a Galois correspondence between quadratic modules on $\cA$ and inducible quadratic modules on $\cB$. The following is easy to verify:
\begin{prop}\label{prop_indres}
Let $p \colon \cA \to \cB$ be a bimodule projection. For every q.m. $\cM\subseteq\cA$
and every inducible q.m. $\cN\subseteq\cB$ we have that
$$\res \cM \subseteq \cN \mbox{  iff  } \cM \subseteq \ind \cN.$$
Here are some special cases:
\begin{enumerate}
\item[(1)] $\Ind(\Res\cM) \supseteq \cM$ and $\Res(\Ind\cN) \subseteq \cN$.
\item[(2)] $\res \cM \subseteq \cM$ iff $\cM \subseteq \ind (\cM \cap \cB).$
\item[(3)] $\cN \subseteq \ind \cN$ iff $\res QM_\cA(\cN) \subseteq \cN$.
\end{enumerate}
\end{prop}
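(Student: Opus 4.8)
The plan is to prove the main biconditional $\res\cM\subseteq\cN \iff \cM\subseteq\Ind\cN$ first, since the three ``special cases'' all follow from it by routine substitutions. I would argue both implications directly from the definitions. For the forward direction, suppose $\res\cM\subseteq\cN$; take $a\in\cM$ and $x\in\cA$. By (QM2) we have $x^\ast a x\in\cM$, so $p(x^\ast a x)\in\res\cM\subseteq\cN$; since this holds for all $x$, we get $a\in\Ind\cN$. For the converse, suppose $\cM\subseteq\Ind\cN$; take any element of $\res\cM$, i.e. $p(m)$ with $m\in\cM$. Then $m\in\Ind\cN$, and applying the defining condition of $\Ind\cN$ with $x=\Un{\cA}$ gives $p(m)=p(\Un{\cA}^\ast m\,\Un{\cA})\in\cN$. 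Here I use (CE4), which guarantees $p(\Un{\cA})=\Un{\cB}$ so that $\Un{\cA}^\ast m\,\Un{\cA}=m$ lands correctly; note this is exactly where property (CE4) enters, consistent with the remark that it is first needed in this subsection.

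Next I would deduce the three itemized consequences. For (1): applying the biconditional with $\cN:=\res\cM$ (which is inducible, as noted after the definition of $\res$) and using the trivially true inclusion $\res\cM\subseteq\res\cM$ yields $\cM\subseteq\Ind(\res\cM)$; applying it with $\cM:=\Ind\cN$ and the trivially true $\Ind\cN\subseteq\Ind\cN$ yields $\res(\Ind\cN)\subseteq\cN$. For (2): this is just the biconditional with the particular inducible q.m. $\cN:=\cM\cap\cB$, once one observes $\res\cM\subseteq\cN$ is literally $\res\cM\subseteq\cM\cap\cB$; since $\res\cM\subseteq\cB$ always and $\res\cM\subseteq\cM$ is the stated hypothesis, the two sides match. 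For (3): apply the biconditional with $\cM:=QM_\cA(\cN)$ and the given $\cN$; one needs $\cN\subseteq\Ind\cN \iff QM_\cA(\cN)\subseteq\Ind\cN$, which holds because $\Ind\cN$ is a q.m. (using that $\cN$ is inducible, via Proposition \ref{indmoddef}(i)) and $QM_\cA(\cN)$ is the smallest q.m. in $\cA$ containing $\cN$, so containing $\cN$ is equivalent to containing $QM_\cA(\cN)$.

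Since every step is a one-line unwinding of definitions, there is no serious obstacle; the only point requiring a moment's care is making sure the hypotheses ``$\cN$ inducible'' are actually used where needed — specifically in (3), where $\Ind\cN$ must be a quadratic module for the phrase ``$QM_\cA(\cN)\subseteq\Ind\cN$'' to be meaningfully equivalent to ``$\cN\subseteq\Ind\cN$'' — and in checking that $\res\cM$ is inducible so that (1) is a legitimate instance of the biconditional. I would present the proof as: first the main equivalence (two short paragraphs, forward and backward), then a single compact paragraph handling (1)--(3) by the substitutions above, flagging in each case which earlier fact (inducibility of $\res\cM$, Proposition \ref{indmoddef}, the elementary properties of $\res$ listed in the text) is being invoked.
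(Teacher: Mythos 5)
Your proof is correct, and since the paper gives no proof at all (it simply says ``The following is easy to verify''), you are supplying exactly the routine unwinding of definitions that the authors left to the reader; the structure (prove the Galois-type adjunction first, then obtain (1)--(3) by substitution) is the natural one and matches the spirit of the text.

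One small misattribution worth fixing: in the backward direction you claim (CE4) is needed ``so that $\Un{\cA}^\ast m\,\Un{\cA}=m$ lands correctly,'' but that identity is just the fact that $\Un{\cA}$ is a self-adjoint unit in $\cA$ and has nothing to do with $p$; no property of $p$ is invoked there beyond well-definedness. Where (CE4) genuinely enters is upstream, in the assertion (stated in the paper just before the proposition) that $\res\cM$ is a quadratic module --- one needs $p(\Un{\cA})=\Un{\cB}\in\res\cM$ to get (QM3) --- and that fact is what licenses your use of the main biconditional with $\cN=\res\cM$ in item (1). Everything else is fine: your observation that $\cN$-inducibility is really only used in (3) to make ``$QM_\cA(\cN)\subseteq\Ind\cN$'' meaningfully equivalent to ``$\cN\subseteq\Ind\cN$'' is exactly right (the set inclusions in the biconditional hold for any $\cN$), and in (2) both directions of the hypothesis independently force $\cM\cap\cB$ to be inducible, so the application of the biconditional is legitimate.
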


By (1), $\Res(\Ind(\Res\cM)) = \Res\cM$ and $\Ind(\Res(\Ind\cN)) = \Ind\cN.$ We say that a q.m. $\cM\subseteq\cA$ is \emph{induced} if $\cM=\ind \cN$ for some q.m. $\cN \subseteq \cB$ (or equivalently, if $\cM=\ind \res \cM$). We say that a q.m. $\cN \subseteq \cB$ is \emph{restricted} if $\cN=\res \cM$ for some q.m. $\cM\subseteq\cA$ (or equivalently, if $\cN=\Res(\Ind\cN)$).

By (2) we have that $\res \cM = \cM \cap \cB$ iff $\cM \subseteq \ind (\cM \cap \cB).$ It follows that $\cM=\ind(\cM \cap \cB)$ iff $\res \cM = \cM \cap \cB$ and $\cM=\ind \res \cM$.

By (3) we have that every inducible q.m. $\cN \subseteq \cB$ which satisfies $\cN \subseteq \ind \cN$ is restricted (since $QM_\cA(\cN) \cap \cB \subseteq \res QM_\cA(\cN) \subseteq \cN \subseteq QM_\cA(\cN) \cap \cB$). 
In many cases (see Propositions \ref{prop_aver} and \ref{prop_graded}), we also have the converse: every restricted q.m. $\cN \subseteq \cB$ satisfies $\cN \subseteq \ind \cN$.

We say that an inducible q.m. $\cN\subseteq\cB$ is \emph{perfect} if
$$QM_\cA(\cN)=\Ind\cN.$$
In the situations from Propositions \ref{prop_aver} and \ref{prop_graded}, every perfect q.m. $\cN$ is restricted and $\ind \cN$ satisfies (Q1)-(Q3) by the discussion above.


\subsection{Induction in stages}

Induction in stages is a useful tool in the theory of induced representations. The next proposition is a counterpart of it for quadratic modules. Note that a compositum of two bimodule projections is always a bimodule projection.
\begin{prop}\label{prop_ind_stage}
Let $\cC\subset\cB\subset\cA$ be $*$-algebras, let $p_1:\cA\to\cB,\ p_2:\cB\to\cC$ be bimodule projections and let $\cK\subseteq\cC$ be a q.m. 
$$
\Ind_{\cB\uparrow\cA}(\Ind_{\cC\uparrow\cB}\cK)=\Ind_{\cC\uparrow\cA}(\cK).
$$
\end{prop}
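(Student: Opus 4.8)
The plan is to unwind both sides of the claimed identity directly from the definition of $\Ind$ and to use the compatibility of the two bimodule projections, namely that $p_2 \circ p_1 \colon \cA \to \cC$ is itself a bimodule projection and is the one implicitly used on the right-hand side. First I would record that, by definition, the right-hand side is
$$
\Ind_{\cC\uparrow\cA}(\cK) = \set{a \in \cA_h \mid (p_2\circ p_1)(x^\ast a x) \in \cK \text{ for all } x \in \cA}.
$$
For the inclusion $\subseteq$: let $a \in \Ind_{\cB\uparrow\cA}(\Ind_{\cC\uparrow\cB}\cK)$. Then $a = a^\ast$ and $p_1(x^\ast a x) \in \Ind_{\cC\uparrow\cB}\cK$ for every $x \in \cA$; applying the definition of the inner induction (with the element $x^\ast a x \in \cB_h$ and test element $\Un{\cB} \in \cB$, or more generally any $b \in \cB$) gives $p_2(b^\ast p_1(x^\ast a x) b) \in \cK$, and taking $b = \Un{\cB}$ yields $p_2(p_1(x^\ast a x)) \in \cK$ for all $x \in \cA$, which is exactly membership in the right-hand side.

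For the reverse inclusion $\supseteq$: suppose $a \in \cA_h$ satisfies $(p_2 \circ p_1)(x^\ast a x) \in \cK$ for all $x \in \cA$. I must show $p_1(x^\ast a x) \in \Ind_{\cC\uparrow\cB}\cK$ for each fixed $x \in \cA$, i.e. that $p_2\bigl(b^\ast\, p_1(x^\ast a x)\, b\bigr) \in \cK$ for every $b \in \cB$. Here is the key step: by property (CE2) for $p_1$ (it is a $\cB$-$\cB$-bimodule map), for $b \in \cB$ we have $b^\ast\, p_1(x^\ast a x)\, b = p_1\bigl(b^\ast x^\ast a x b\bigr) = p_1\bigl((xb)^\ast a (xb)\bigr)$. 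Since $xb \in \cA$, the hypothesis applied with $xb$ in place of $x$ gives $p_2\bigl(p_1((xb)^\ast a (xb))\bigr) \in \cK$, which is precisely $p_2\bigl(b^\ast p_1(x^\ast a x) b\bigr) \in \cK$. Thus $p_1(x^\ast a x) \in \Ind_{\cC\uparrow\cB}\cK$ for all $x \in \cA$, so $a \in \Ind_{\cB\uparrow\cA}(\Ind_{\cC\uparrow\cB}\cK)$.

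The only subtlety — and the one point that deserves a sentence of care rather than being a real obstacle — is that $\Ind_{\cC\uparrow\cB}\cK$ and $\Ind_{\cC\uparrow\cA}\cK$ need not be quadratic modules unless $\cK$ is inducible (Proposition \ref{indmoddef}); but the statement as given makes no such assumption, and indeed the argument above only manipulates the defining set-membership conditions and the bimodule identity (CE2), never using (QM3), so the equality of the two \emph{sets} holds regardless of inducibility. It is also worth noting in passing that the composition $p_2 \circ p_1$ satisfies (CE1)--(CE4): (CE1) and (CE3) are immediate from functoriality of the two projections, (CE2) follows because $\cC \subseteq \cB$, and (CE4) is $p_2(p_1(\Un{\cA})) = p_2(\Un{\cB}) = \Un{\cC}$; hence $\Ind_{\cC\uparrow\cA}\cK$ on the right is well-posed with respect to the composed projection. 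The whole proof is therefore a short double-inclusion argument whose engine is the single application of (CE2) in the form $b^\ast p_1(y) b = p_1(b^\ast y b)$.
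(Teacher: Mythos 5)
Your proof is correct and takes essentially the same route as the paper's: the paper proves the identity as a chain of biconditionals whose engine is exactly the same application of (CE2) you use, namely $b^\ast p_1(y) b = p_1(b^\ast y b)$, followed by the observation that quantifying over $a\in\cA$ and $b\in\cB$ is equivalent to quantifying over $ab\in\cA$ (taking $b=\Un{\cB}$ to get one inclusion). Your remark that $p_2\circ p_1$ is itself a bimodule projection is also stated in the paper immediately before the proposition, and your observation that the set identity does not require inducibility of $\cK$ is a fair and correct gloss on the argument.
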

\begin{proof}
Take $x\in\cA_h.$ Using properties (CE1)-(CE4) we get
\begin{gather*}
x\in\Ind_{\cB\uparrow\cA}(\Ind_{\cC\uparrow\cB}\cK)\Longleftrightarrow p_1(a^*xa)\in\Ind_{\cC\uparrow\cB}\cK,\ \forall a\in\cA\Longleftrightarrow\\
\Longleftrightarrow p_2(b^*p_1(a^*xa)b)=p_2(p_1(b^*a^*xab))\in\cK,\ \forall a\in\cA,\forall b\in\cB\Longleftrightarrow\\
\Longleftrightarrow p_2(p_1(a^*xa))=p_2(p_1(\Un{\cB}a^*xa\Un{\cB}))\in\cK,\ \forall a\in\cA \Longleftrightarrow x\in\Ind_{\cC\uparrow\cA}\cK.
\end{gather*}
This completes the proof. 
\end{proof}

\begin{exam}\label{dce}
In this example we show that a composition of two conditional expectations is in general not a conditional expectation. For let $\cA=\dR[x],\ \cB=\dR[x^2],\ \cC=\dR[x^4]$ and let $\gp_1:\cA\to\cB,\ \gp_2:\cB\to\cC$ be bimodule projections defined by $(\gp_1(f))(x)=\frac 1 2(f(x)+f(-x))$ and $(\gp_2(g))(x^2)=\frac 1 2(g(x^2)+g(-x^2))$ respectively. It easy to verify that $\gp_1,\gp_2$ are conditional expectations. On the other hand $(\gp_2\circ\gp_1)((x^3-x)^2)=-2x^4\notin\sum\cA^2,$ i.e. $(\gp_2\circ\gp_1)$ is not a conditional expectation.
\end{exam}

\hide{

\subsection{Topics}

Let $\cB\subseteq\cA$ be $*$-algebras and $p:\cA\to\cB$ be a bimodule projection. We say that an inducible q.m. $\cN\subseteq\cB$ is \emph{perfect} if
$$QM_\cA(\cN)=\Ind_{\cB\uparrow\cA}\cN.$$
A q.m. $\cM\subseteq\cA$ is called \emph{perfect} if $\Res\cM=\cM\cap\cB.$ 
In the following sections we will mainly study the following properties for $(\cA,\cB,p)$:
\begin{itemize}
  \item given an inducible q.m. $\cN\subseteq\cB$ is $\cN$ perfect?
  \item given q.m. $\cM\subseteq\cA$ is $\cM$ perfect?
  \item given a q.m. $\cN\subseteq\cB$ is $\cN=\Res\cM$ for some $\cM$?
  \item given a q.m. $\cM\subseteq\cA$ is $\cM=\Ind\cN$ for some q.m. $\cN\subseteq\cB$?
\end{itemize}
A characterization of group representations which are induced from a given subgroup is called an imprimitivity theorem. 
Thus, a general answer to the latter question can be viewed as an \emph{imprimitivity theorem for quadratic modules}. 
Every of these questions can be very complicated for a given $(\cA,\cB,p).$ We obtain satisfactory answers only for the case 
when $(\cA,\cB,p)=(L,K,\operatorname{tr}),$ where $L/K$ is a finite field extension and $\operatorname{tr}$ is the trace. 

}

\section{Averages of actions of finite groups}\label{subsect_aver}

Suppose that $G$ is a finite group which acts by $*$-automorphisms $\alpha_g,\ g\in G$ on a $*$-algebra $\cA,$ and let $\cB:=\{a\in\cA\ \mid\ \alpha_g(a)=a\ \mbox{for all}\ g\in G\}\subseteq\cA$ be the $*$-subalgebra of stable elements. Let $p:\cA\to\cB$ be the average mapping given by
\begin{align}\label{defpG}
p(a)=\frac{1}{|G|}\sum_{g\in G}\alpha_g(a),\ a\in\cA.
\end{align}
It can be easily shown that $p$ is a conditional expectation from $\cA$ onto $\cB,$ see also \cite{cks,ss}.

\begin{prop}\label{prop_aver}
Let $p$ be defined by (\ref{defpG}).
\begin{enumerate}
\item[(i)] If $\cN\subseteq\cB$ is an inducible quadratic module then $\Ind\cN$ is a $G$-invariant q.m. such that
$$\Ind\cN\cap\cB=\Res(\Ind\cN)$$
\item[(ii)] For every q.m. $\cM\subseteq\cA$ we have
$$\Res\cM\subseteq\Ind(\Res\cM).$$
\end{enumerate}
\end{prop}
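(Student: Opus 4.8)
The plan is to prove the two parts of Proposition~\ref{prop_aver} by exploiting the key identity that, for the averaging projection, $p(a)$ is fixed by every $\alpha_g$ and, more importantly, that $p$ intertwines the $G$-action in the sense $p(\alpha_g(a))=p(a)$ and $\alpha_g(p(a))=p(a)$ for all $g$. I will also use repeatedly that each $\alpha_g$ is a $\ast$-automorphism, so it maps $\sum\cA^2$ into itself and, being a bijection fixing $\cB$ pointwise, it maps any quadratic module to a quadratic module.

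\smallskip
\emph{Part (i).} Let $\cN\subseteq\cB$ be inducible, so by Proposition~\ref{indmoddef} $\Ind\cN$ is a quadratic module. First I would check $G$-invariance: fix $a\in\Ind\cN$ and $h\in G$; for any $x\in\cA$ write $p(x^\ast\alpha_h(a)x)=p\bigl(\alpha_h(\alpha_{h^{-1}}(x)^\ast\, a\, \alpha_{h^{-1}}(x))\bigr)=p\bigl(\alpha_{h^{-1}}(x)^\ast\, a\, \alpha_{h^{-1}}(x)\bigr)$, using (CE1)--(CE3) together with $p\circ\alpha_h=p$ (which is immediate from the definition of the average). Since $\alpha_{h^{-1}}(x)$ ranges over all of $\cA$ as $x$ does, this lies in $\cN$, so $\alpha_h(a)\in\Ind\cN$. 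Next, for the equality $\Ind\cN\cap\cB=\Res(\Ind\cN)$: the inclusion $\supseteq$ holds because $p$ maps $\cA_h$ into $\cB_h$ and $\Res$ of any q.m. lands in $\cB$ and is contained in the q.m.\ — actually I must show $\Res(\Ind\cN)\subseteq\cB$, which is automatic, and $\Res(\Ind\cN)\subseteq\Ind\cN$; the latter follows since for $a\in\Ind\cN$ and any $x\in\cA$, $p\bigl(x^\ast p(a)x\bigr)=\frac1{|G|}\sum_g p\bigl(x^\ast\alpha_g(a)x\bigr)$, and each summand is in $\cN$ by the $G$-invariance just proved, hence $p(a)\in\Ind\cN$. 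For $\subseteq$: if $a\in\Ind\cN\cap\cB$ then $a=p(a)\in\Res(\Ind\cN)$.

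\smallskip
\emph{Part (ii).} Let $\cM\subseteq\cA$ be any q.m. I must show $\Res\cM\subseteq\Ind(\Res\cM)$, i.e.\ that for every $m\in\cM$ and every $x\in\cA$ we have $p\bigl(x^\ast p(m)\,x\bigr)\in\Res\cM$. Using the average, $p\bigl(x^\ast p(m)x\bigr)=\frac1{|G|^2}\sum_{g,h\in G}\alpha_h\bigl(x^\ast\alpha_g(m)x\bigr)$. The idea is to rewrite this as $p$ applied to a single element of $\cM$. Indeed $\alpha_h\bigl(x^\ast\alpha_g(m)x\bigr)=\alpha_h(x)^\ast\,\alpha_{hg}(m)\,\alpha_h(x)$. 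Re-index with $k=hg$; for fixed $h$ the inner sum over $g$ becomes $\sum_{k}\alpha_h(x)^\ast\alpha_k(m)\alpha_h(x)=\alpha_h(x)^\ast\bigl(|G|\,p(m)\bigr)\alpha_h(x)$, which circles back. So instead I would argue directly: $p\bigl(x^\ast p(m)x\bigr)=p\bigl(p(x^\ast p(m)x)\bigr)$ is already in $\cB$, and I claim it equals $p(y)$ for $y:=\frac1{|G|}\sum_{g}\alpha_g(x)^\ast\,m\,\alpha_g(x)\in\cM$ (a sum of elements of the q.m.\ $\cM$ by (QM1)--(QM2)). Compute $p(y)=\frac1{|G|^2}\sum_{g,h}\alpha_h\bigl(\alpha_g(x)^\ast m\,\alpha_g(x)\bigr)=\frac1{|G|^2}\sum_{g,h}\alpha_{hg}(x)^\ast\,\alpha_h(m)\,\alpha_{hg}(x)$; re-indexing $k=hg$ gives $\frac1{|G|^2}\sum_{h,k}\alpha_k(x)^\ast\alpha_h(m)\alpha_k(x)=\frac1{|G|}\sum_k\alpha_k(x)^\ast p(m)\,\alpha_k(x)$. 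On the other hand $p\bigl(x^\ast p(m)x\bigr)=\frac1{|G|}\sum_h\alpha_h\bigl(x^\ast p(m)x\bigr)=\frac1{|G|}\sum_h\alpha_h(x)^\ast\,p(m)\,\alpha_h(x)$, since $p(m)\in\cB$ is $\alpha_h$-fixed. These two expressions coincide, so $p\bigl(x^\ast p(m)x\bigr)=p(y)\in\Res\cM$, as required.

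\smallskip
The routine bookkeeping is the re-indexing $k=hg$ inside the double sums, which is straightforward since $G$ is a group. The only point that needs a little care — and which I expect to be the main (mild) obstacle — is keeping track of \emph{which} variable the outer $\alpha_h$ acts on after pulling it inside the product, i.e.\ making sure one does not accidentally move $\alpha_h$ past an element of $\cA$ that is not in $\cB$; this is handled cleanly by first writing everything in terms of $\ast$-automorphisms applied to products and only then re-indexing. Everything else is a direct consequence of the conditional-expectation axioms (CE1)--(CE4) and the defining formula \eqref{defpG}.
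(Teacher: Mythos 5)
Your proof is correct and follows essentially the same strategy as the paper's, resting on the same re-indexing of the double average $\sum_{g,h}\alpha_g(x)^*\alpha_h(a)\alpha_g(x)=\sum_{k,h}\alpha_h(\alpha_k(x)^*a\,\alpha_k(x))$. The only organizational difference is in part (i): you establish $G$-invariance first (via $p\circ\alpha_h=p$) and then deduce $\Res(\Ind\cN)\subseteq\Ind\cN$ by expanding $p(a)=\frac1{|G|}\sum_g\alpha_g(a)$ and applying invariance term by term, which avoids the double-sum computation in (i), whereas the paper proves $\Res(\Ind\cN)\subseteq\Ind\cN$ directly from that computation and treats invariance separately.
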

\begin{proof}
(i): Note that $\cB\cap\Ind\cN=p(\cB\cap\Ind\cN)\subseteq p(\Ind\cN)=\Res(\Ind\cN).$ Let $a\in\Ind\cN.$ For arbitrary $x\in\cA$ we have
\begin{gather}
\nonumber p(x^*p(a)x)=\frac{1}{|G|}\sum_{g\in G}\alpha_g(x^*)p(a)\alpha_g(x)=\frac{1}{|G|^2}\sum_{g,h\in G}\alpha_g(x^*)\alpha_h(a)\alpha_g(x)=\\
\label{eq_aux2}=\frac{1}{|G|^2}\sum_{k\in G}\sum_{h\in G}\alpha_h(\alpha_k(x^*)a\alpha_k(x))=\frac{1}{|G|}\sum_{k\in G}p(\alpha_k(x^*)a\alpha_k(x))\in\cN,
\end{gather}
that is $p(a)\in\Ind\cN.$ It proves $\Res(\Ind\cN)\subseteq\cB\cap\Ind\cN.$ To see, that $\Ind\cN$ is invariant, take $a\in\Ind\cN.$ Then for every $x\in\cA$ and $g\in G$ we have
$p(x^*\alpha_g(a)x)=p(\alpha_{g^{-1}}(x^*)a\alpha_{g^{-1}}(x))\in\cN,$ hence $\alpha_g(a)\in\Ind\cN.$

(ii): Let $a\in\cM.$ For an arbitrary $x\in\cA$ calculation (\ref{eq_aux2}) yields
$$
p(x^*p(a)x)=\frac{1}{|G|}\sum_{k\in G}p(\alpha_k(x^*)a\alpha_k(x))\in\Res\cM.
$$
Hence $p(a)\in\Ind(\Res\cM),$ which implies the assertion.
\end{proof}

Assertion (i) suggests the following problem: Is every $G$-invariant q.m. $\cM\subseteq\cA$ induced? Cf. Proposition \ref{prop_prefect_pr_K} for a very special case of this.
In view of Subsection \ref{subsecres}, assertion (ii) implies that a q.m. $\cN \subseteq \cB$ is contained in $\ind \cN$ iff it is restricted.


\subsection{Matrix $\ast$-algebras}\label{secmatrix}
Let $\cB$ be a $\ast$-algebra, $N$ be a positive integer and $\cA=M_N(\cB)$ with involution $[b_{ij}]^*=[b_{ji}^*]$.
Let $G$ be the semidirect product $(\dZ/2)^N\rtimes_\phi\dZ/N$ where
$\phi(1) \colon (i_1,i_2,\ldots,i_N) \mapsto (i_N,i_1,\ldots,i_{N-1})$.
The action of $G$ on $\cA$ is defined by $A^g=T_g^* A T_g$ where, for $g=(i_1,\ldots,i_N,j)$,
$$T_g=\left[\begin{array}{ccccc} (-1)^{i_1} & 0 & \ldots  & 0 & 0 \\ 0 & (-1)^{i_2} & \ldots & 0 & 0 \\
\vdots & \vdots & \ddots & \vdots & \vdots\\ 0 & 0 & \ldots & (-1)^{i_{N-1}} & 0 \\ 0 & 0& \ldots & 0 & (-1)^{i_N} \end{array} \right]
\left[\begin{array}{ccccc} 0 & 1 & 0 & \ldots & 0\\ 0 & 0 & 1 & \ldots & 0\\ \vdots & \vdots & \vdots & \ddots & \vdots \\ 
0 & 0& 0& \ldots& 1 \\ 1 & 0 & 0 & \ldots & 0\end{array} \right]^j$$
If we identify $\cB \otimes I_N$ with $\cB$, then the average map $\gp(A)=\frac{1}{\vert G \vert} \sum_{g \in G} A^g$
coincides with the normalized trace $\tr(A)=\frac{1}{N}(a_{11}+\ldots+a_{NN})$.
Since $\tr (P^*A P) = \frac{1}{n} \sum_k(\sum_{i,j}p_{ik}^*a_{ij}^{}p_{jk}^{})$ for every $P=(p_{ij})$ and $A =(a_{ij})$
from $M_N(\cB)$, it follows that every q.m. $\cN$ in $\cB$ is inducible  and
\begin{gather*}
\Ind^\tr\cN=\{A\in M_N(\cB)_h\mid \sum_{i,j}x_i^*a_{ij}^{}x_j^{}\in\cN,\ \mbox{for all}\ x_1,\dots x_N\in\cB\}.
\end{gather*}

Let $\cB=\dR[x_1,\dots,x_n]$ and $S=\set{p_1,\dots,p_m}\subset\cB.$ Write
$K_S=\{a \in \RR^n \mid p_1(a) \ge 0, \ldots, p_m(a) \ge 0\}$ and  $\operatorname{Pos}(K_S)$
for the set of all polynomials from $\cB$ which are nonnegative on $K_S$. 
Let $T_S$ be the preordering in $\cB$ generated by $S$. Equivalently, 
$T_S$ is the quadratic module in $\cB$ generated by all square-free products of elements from $S$.
Let $\tr \colon M_N(\cB) \to \cB$ be the normalized trace. The following is clear:

\begin{prop}\label{posconj} With $S$ and $\cB$ as above,
$\Ind^{\tr} \operatorname{Pos}(K_S)$ coincides with the set of matrix polynomials from $M_N(\cB)_h$ which are positive semi-definite on $K_S.$
\end{prop}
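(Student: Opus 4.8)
The plan is to show the two sets coincide by chasing the explicit description of $\Ind^{\tr}\cN$ for $\cN=\operatorname{Pos}(K_S)$ that was derived just above the statement. By that formula,
\[
\Ind^{\tr}\operatorname{Pos}(K_S)=\{A\in M_N(\cB)_h\mid \textstyle\sum_{i,j}x_i^*a_{ij}x_j\in\operatorname{Pos}(K_S)\ \text{for all}\ x_1,\dots,x_N\in\cB\}.
\]
So the task reduces to proving: a Hermitian matrix polynomial $A=(a_{ij})$ is positive semi-definite on $K_S$ (i.e. $A(t)$ is a PSD Hermitian matrix over $\dR$ for every $t\in K_S$) if and only if the scalar polynomial $t\mapsto\sum_{i,j}\overline{x_i(t)}\,a_{ij}(t)\,x_j(t)$ is nonnegative on $K_S$ for every choice of polynomials $x_1,\dots,x_N\in\cB$.

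First I would do the easy ($\subseteq$) direction: if $A(t)\succeq 0$ for all $t\in K_S$, then for any fixed $t\in K_S$ and the vector $v=(x_1(t),\dots,x_N(t))^{\mathsf T}$ we have $v^*A(t)v\ge 0$, which is exactly the value at $t$ of $\sum_{i,j}\overline{x_i}a_{ij}x_j$; since $t\in K_S$ was arbitrary this polynomial lies in $\operatorname{Pos}(K_S)$. For the converse ($\supseteq$), suppose $A\notin\{A\mid A(t)\succeq 0\ \forall t\in K_S\}$, so there is a point $t_0\in K_S$ and a vector $w\in\dR^N$ (or $\CC^N$, but since $\cB=\dR[x_1,\dots,x_n]$ the entries $a_{ij}(t_0)$ are real symmetric and we may take $w$ real) with $w^{\mathsf T}A(t_0)w<0$. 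Now take \emph{constant} polynomials $x_i:=w_i\in\dR\subseteq\cB$; then $\sum_{i,j}x_i a_{ij}x_j$ evaluated at $t_0$ equals $w^{\mathsf T}A(t_0)w<0$, so this scalar polynomial is not in $\operatorname{Pos}(K_S)$, witnessing $A\notin\Ind^{\tr}\operatorname{Pos}(K_S)$. Contrapositively, $\Ind^{\tr}\operatorname{Pos}(K_S)$ is contained in the set of matrix polynomials PSD on $K_S$.

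The only genuinely content-bearing point — and it is why the proposition says ``the following is clear'' — is the elementary linear-algebra fact that a real symmetric matrix $M$ is positive semi-definite iff $v^{\mathsf T}Mv\ge 0$ for all $v\in\dR^N$, applied pointwise over $t\in K_S$; combined with the observation that constant polynomials already suffice to detect failure of positivity at any single point. I expect no real obstacle here: the main (mild) care is just to note that it is enough to test with constant $x_i$ for the hard direction, while allowing arbitrary polynomial $x_i$ does no harm in the easy direction. One should also record that $\Ind^{\tr}\operatorname{Pos}(K_S)$ is indeed a quadratic module (it is, by the general theory, since $\operatorname{Pos}(K_S)$ is inducible because $\tr$ is a conditional expectation and $\sum_k(\sum_i \overline{p_{ik}}p_{ik})\ge 0$ on all of $\dR^n$), but this is immediate and need not be belabored. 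Thus the proof is a two-line pointwise argument in each direction once the explicit formula for $\Ind^{\tr}\cN$ is in hand.
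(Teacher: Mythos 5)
Your proof is correct, and it is exactly the pointwise linear-algebra argument the paper has in mind when it declares the proposition ``clear'' and gives no further justification: you first read off the explicit description of $\Ind^{\tr}\operatorname{Pos}(K_S)$ derived immediately before the statement, and then observe that $\sum_{i,j}x_i a_{ij} x_j$ being nonnegative on $K_S$ for all $x_1,\dots,x_N\in\cB$ is equivalent to $A(t)\succeq 0$ for all $t\in K_S$, with the only (mild) content being that constant test vectors $x_i=w_i$ already detect a failure of positive semi-definiteness at a single point. Nothing is missing; the remark that $\operatorname{Pos}(K_S)$ is automatically inducible is also correct and was already established by the paper's trace computation.
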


The following remains unsolved:

\medskip

\begin{conj}
With $S$ and $\cA$  as above, $\Ind^{\tr} T_S= T_S \cdot \Sigma M_N(\cB)^2$,
i.e. $T_S$ is a perfect q.m.
\end{conj}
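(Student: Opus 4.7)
The plan is to establish the two inclusions $QM_{\cA}(T_S) \subseteq \Ind^\tr T_S$ and $\Ind^\tr T_S \subseteq QM_\cA(T_S)$ separately, writing $\cA = M_N(\cB)$; once one observes that $QM_\cA(T_S) = T_S \cdot \Sigma M_N(\cB)^2$ (because scalar matrices $t I_N$ commute with everything in $\cA$), the conjecture reduces to this equality of q.m.s. The first inclusion is routine: every generator of $QM_\cA(T_S)$ is of the form $Y^*(tI_N)Y$ with $t \in T_S$ and $Y \in \cA$, and for any column $x \in \cB^N$,
$$x^* Y^* (tI_N) Y x \;=\; t \sum_{\ell=1}^N \bigl|(Yx)_\ell\bigr|^2 \;\in\; T_S,$$
since $T_S$ is closed under multiplication by sums of squares in $\cB$. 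Combined with the explicit description of $\Ind^\tr T_S$ given in Section~\ref{secmatrix}, this gives the inclusion.

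For the hard inclusion my plan is to mimic the proof of Proposition~\ref{prop_ind_rep_mod}. Fix $A \in \Ind^\tr T_S$ and an arbitrary $*$-representation $\rho$ of $\cB$ with $T_S \subseteq QM_\cB(\rho)$, written without loss of generality as an orthogonal direct sum $\rho = \bigoplus_k \rho_k$ of cyclic components with cyclic vectors $v_k$. For any family $b_{ik} \in \cB$, set $\xi_i := \sum_k \rho(b_{ik}) v_k$; a computation parallel to (\ref{eq_long}), using orthogonality of the cyclic summands, collapses to
$$\sum_{i,j=1}^N \langle \rho(a_{ij}) \xi_j, \xi_i\rangle \;=\; \sum_k \langle \rho(x_k^* A x_k) v_k, v_k\rangle \;\geq\; 0,$$
where $x_k = (b_{1k}, \dots, b_{Nk})^T \in \cB^N$ and $x_k^* A x_k \in T_S$ by hypothesis. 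By the correspondence between $*$-representations of $\cB$ and of $M_N(\cB)$ (entrywise action), every $*$-representation $\pi$ of $\cA$ with $T_S \subseteq QM_\cA(\pi)$ arises this way, so $\pi(A) \geq 0$ for every such $\pi$; Proposition~\ref{prop_closed_qm} then places $A$ in the closure of $QM_\cA(T_S)$ in the finest locally convex topology on $\cA_h$.

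The main obstacle is to pass from $A \in \overline{QM_\cA(T_S)}$ to $A \in QM_\cA(T_S)$ itself. The natural algebraic attempt is induction on $N$ via Schur complement: writing $A = \begin{pmatrix} A' & b \\ b^* & d \end{pmatrix}$ one reads off $d = e_N^* A e_N \in T_S$ and that $A'$ satisfies the analogous hypothesis in $M_{N-1}(\cB)$, so by induction $A' \in T_S \cdot \Sigma M_{N-1}(\cB)^2$; but the off-diagonal block $b$ cannot be absorbed into a matrix sum of hermitian squares without essentially inverting $d$, which fails in $\cB$ when $d$ is not a unit—clearing denominators only shows that some power $d^k A$ lies in $T_S \cdot \Sigma M_N(\cB)^2$, and one lacks a cancellation lemma to remove $d^k$. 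An alternative route is to prove that $T_S \cdot \Sigma M_N(\cB)^2$ is already closed in the finest locally convex topology on $\cA_h$ and combine this with the representation argument, but such a closedness statement itself appears to require a denominator-free matrix Positivstellensatz. It is precisely this denominator obstruction—shared by both routes—that explains why the assertion must be left as a conjecture.
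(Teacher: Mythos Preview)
The statement is labeled a \emph{Conjecture} in the paper and is explicitly introduced with ``The following remains unsolved.'' There is no proof in the paper to compare against; the paper merely records special cases (one-variable $\cB$ with $S=\emptyset$, $\{x\}$, or $\{x,1-x\}$, and the circle $\{1-x^2-y^2,\,x^2+y^2-1\}$) where the equality is known via classical matrix Fej\'er--Riesz type factorizations.

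Your write-up is therefore not a ``proof proposal'' in the usual sense but an analysis of why the problem is open, and on that level it is accurate. The easy inclusion $QM_\cA(T_S)\subseteq\Ind^{\tr}T_S$ is correct as written. Your argument for the reverse inclusion up to closure is also sound: representations of $M_N(\cB)$ are, by Morita equivalence, in bijection with representations of $\cB$ (acting entrywise on $\cV^N$), and the cyclic-vector computation you sketch does yield $\pi(A)\ge 0$ for every $\pi$ with $T_S\subseteq QM_\cA(\pi)$, hence $A\in\overline{QM_\cA(T_S)}$. You then correctly isolate the genuine obstruction: passing from the closure to $QM_\cA(T_S)$ itself would require either that $T_S\cdot\Sigma M_N(\cB)^2$ be closed in the finest locally convex topology, or a denominator-cancellation lemma after a Schur-complement reduction, and neither is available in general.

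One small caveat: Proposition~\ref{prop_closed_qm} is stated for complex $*$-algebras, whereas here $\cB=\dR[x_1,\dots,x_n]$; the separation/GNS argument still goes through over $\dR$, but strictly speaking you are invoking the obvious real analogue rather than the proposition as written. This does not affect your conclusion.
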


If $T_S$ is saturated (i.e. if $T_S=\operatorname{Pos}(K_S)$) then, by Proposition \ref{posconj}, 
the conjecture is equivalent to the following:
\emph{every matrix polynomial $P\in M_N(\cB)_h$ which is positive semi-definite on $K_S$ can be represented as 
$P=\sum\alpha_i A_i^*A_i,$ where $A_i\in  M_N(\cB)$ and $\alpha_i\in T_S.$}
The latter is known to be true in the following cases: 
\begin{itemize}
\item $\cB=\dR[x]$ and $S=\emptyset$, see \cite{jac} or \cite{dj}, 
\item $\cB=\dR[x]$ and $S=\set{x}$ or $\set{x,1-x}$, see \cite{ds} or \cite[Sec. 7]{ss2},
\item $\cB=\dR[x,y]$ and $S=\{1-x^2-y^2,x^2+y^2-1\}$, see \cite{ro}.
\end{itemize}

\subsection{Regular functions}
Let $V\subseteq\dR^n$ be a real algebraic variety and let $G$ be a finite automorphism group of $\dR[V].$ 
Consider also the corresponding action of $G$ on $V.$ Further, let $\cA=\dR[V],$ 
$\cB=\dR[V]^G$ be the subalgebra of $G$-stable elements, and let $p:\cA\to\cB$ be the canonical projection. 
Denote by $\cA_+$ the set of positive polynomials on $V$.
\begin{prop}\label{prop_indB_+}
$\Ind^p(\cA_+ \cap \cB)=\cA_+.$
\end{prop}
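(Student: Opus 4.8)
The plan is to prove the two inclusions separately. The inclusion $\Ind^p(\cA_+\cap\cB)\subseteq\cA_+$ is the easier direction: given $a\in\Ind^p(\cA_+\cap\cB)$, we know $p(x^*ax)\in\cA_+\cap\cB$ for every $x\in\cA$. Taking $x=\Un{}$ gives $p(a)\in\cA_+$, but this is not yet enough since $a$ need not be $G$-stable. Instead I would use that for a point $v\in V$ the evaluation functional factors through $p$ in a controlled way: writing $G_v$ for the orbit of $v$, we have $p(b)(v)=\frac{1}{|G|}\sum_{g\in G} b(g\cdot v)$. Since $\cA=\dR[V]$ is commutative, $x^*ax=x^2a$ (we are over $\dR$), so $p(x^2 a)(v)=\frac{1}{|G|}\sum_{g} x(g\cdot v)^2\, a(g\cdot v)\ge 0$ for all $x\in\dR[V]$. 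Choosing $x$ that is large near one orbit point $g_0\cdot v$ and small at the others (this uses that the orbit points are distinct and that $\dR[V]$ separates points of $V$, via the Stone–Weierstrass-type interpolation on a finite set of points — here one can literally take an interpolating polynomial), we force $a(g_0\cdot v)\ge 0$; since $v$ was arbitrary, $a\in\cA_+$.

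For the reverse inclusion $\cA_+\subseteq\Ind^p(\cA_+\cap\cB)$, let $a\in\cA_+$ and $x\in\cA$. Then $x^*ax=x^2a\ge 0$ pointwise on $V$, hence $p(x^2a)=\frac{1}{|G|}\sum_{g\in G}\alpha_g(x^2 a)$ is a sum of nonnegative functions, so $p(x^2a)\in\cA_+$; and it lies in $\cB$ since $p$ maps into $\cB$. Thus $p(x^2a)\in\cA_+\cap\cB$ for all $x$, i.e. $a\in\Ind^p(\cA_+\cap\cB)$. This direction is essentially immediate from positivity being preserved by the averaging operator and is really just the statement that $\cA_+$ is $G$-invariant and closed under the relevant operations.

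The main obstacle is the first inclusion, specifically the separation argument: one must be careful that $V$ may be singular or disconnected, so "choosing $x$ concentrated near one orbit point" should be phrased as pure polynomial interpolation on the finite set $\{g\cdot v : g\in G\}$ of distinct points of $V$, which is always possible since these are finitely many distinct points in $\dR^n$ and restriction $\dR[x_1,\dots,x_n]\to\dR^{G\cdot v}$ is surjective — so even the restriction $\dR[V]\to\dR^{G\cdot v}$ is surjective. A secondary subtlety is that the orbit $G\cdot v$ need not consist of $|G|$ distinct points, but the argument is unaffected: group the sum $\frac{1}{|G|}\sum_{g}x(g v)^2 a(gv)$ by distinct orbit representatives, choose $x$ vanishing at all orbit points except a chosen one $w$ where $x(w)=1$, and conclude $a(w)\ge 0$. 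One should also remark that no properness or boundedness hypothesis is needed here because $\cA_+$ is by definition the set of everywhere-nonnegative regular functions, so the whole proof is elementary and does not invoke Proposition~\ref{prop_ind_rep_mod}.
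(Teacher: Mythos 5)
Your proof is correct and is essentially the paper's argument with the $*$-representation packaging removed. The paper constructs, for each $\chi\in V$, a finite-dimensional ``orbit representation'' $T_\chi$ of $\cA$ on the space $[\cA]_\chi$ of restrictions of polynomials to $\Orb\chi$, with inner product $\langle[f]_\chi,[g]_\chi\rangle := p(fg)(\chi)$, and then observes that $\langle T_\chi(f)[g]_\chi,[g]_\chi\rangle = p(fg^2)(\chi)\geq 0$ for all $g$ forces $f(\chi)\geq 0$; that last implication is exactly your polynomial-interpolation-on-the-finite-orbit step, made explicit. What your version buys is that the interpolation step, including the remarks about possibly non-free orbits and surjectivity of $\dR[V]\to\dR^{G\cdot v}$, is stated in the open rather than tucked into ``direct computations show''; you also spell out the easy inclusion $\cA_+\subseteq\Ind^p(\cA_+\cap\cB)$, which the paper treats as immediate. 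What the paper's phrasing buys is that it fits the surrounding theme of the section (inducing $*$-representations and Proposition~\ref{prop_ind_rep_mod}) and makes the compact-group generalization (``replace sums with Haar integrals'') transparent, whereas the interpolation argument would need to be reworked in that setting.
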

\begin{proof} 
The idea of the proof is to induce $*$-representations from $\cB$ to $\cA$ via $p,$ see Appendix. 
For $\chi\in V$ and $f\in\cA$ denote by $\Orb\chi\subseteq V$ the $G$-orbit of $\chi,$ by $[f]_\chi$ 
the corresponding function on $\Orb\chi$ and put $[\cA]_\chi=\set{[f]_\chi\ \mid\ f\in\cA}.$ 
It can be verified, that $\langle[f]_\chi,[g]_\chi\rangle:=(p(fg))(\chi)$ is a well-defined scalar product on $[\cA]_\chi.$ 
For each $f\in\cA$ introduce a linear operator $T_\chi(f)$ on $[\cA]_\chi$ via $T_\chi(f)[g]_\chi=[fg]_\chi,\ g\in\cA.$ 
Again, direct computations show that $T_\chi$ defines a $*$-representation of $\cA$ on the inner-product space $[\cA]_\chi.$ Then
$$
\langle T_\chi(f)[g]_\chi,[g]_\chi\rangle=\langle[fg]_\chi,[g]_\chi\rangle=p(f\cdot g^2)(\chi)\geq 0,\ \forall g\in\cA
$$
implies $f(\chi)\geq 0.$ Hence $p(f\cdot g^2)\in\cA_+ \cap \cB,\ \forall g\in\cA$ implies $f\in\cA_+.$
\end{proof}

The following conjecture remains open:
\medskip

\begin{conj}\label{conj2}
With $\cA$, $\cB$ and $p$ as above,
$\ind^p(\sum\cA^2\cap\cB)=\sum\cA^2.$
\end{conj}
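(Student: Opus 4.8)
The plan is to imitate the proof of Proposition \ref{prop_indB_+}, replacing the family of $*$-representations $T_\chi$ (indexed by orbits) by the left regular representations attached to positive functionals, and to exploit the fact that $\sum\cA^2$ is the intersection of $QM_\cA(\pi)$ over all $*$-representations $\pi$ of $\cA$ (equivalently, that $\sum\cA^2$ is the smallest closed q.m., by Proposition \ref{prop_closed_qm}, once one knows it is closed — which for $\cA=\dR[V]$ is classical). The inclusion $\sum\cA^2\subseteq\ind^p(\sum\cA^2\cap\cB)$ is automatic from Proposition \ref{prop_indres}(1), since $\res(\sum\cA^2)\subseteq\sum\cA^2\cap\cB$ (indeed $p$ is a conditional expectation here, so $\res(\sum\cA^2)=\sum\cA^2\cap\cB$). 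So the content is the reverse inclusion: if $a\in\cA_h$ and $p(x^\ast a x)=p(x^2 a)\in\sum\cB^2$ for all $x\in\cA$, then $a\in\sum\cA^2$.

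**Key steps.**
First I would reduce, via Proposition \ref{prop_closed_qm} and the fact that $\sum\cA^2$ is closed in $\dR[V]$, to showing that every $*$-representation $\rho$ of $\cB$ with $QM_\cB(\rho)\supseteq\sum\cB^2$ (i.e.\ every $*$-representation, since $\sum\cB^2$ is the smallest q.m.) induces up to a $*$-representation $\ind\rho$ of $\cA$; then $\sum\cA^2\cap\cB=\sum\cB^2$ is inducible and, by Proposition \ref{prop_ind_rep_mod}, $\ind(\sum\cB^2)=\ind(QM_\cB(\rho))=QM_\cA(\ind\rho)\supseteq\sum\cA^2$ once $\rho$ ranges over enough representations. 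Concretely: take $a\in\ind^p(\sum\cA^2\cap\cB)$; to show $a\in\sum\cA^2$ it suffices to show $\varphi(a)\ge 0$ for every positive linear functional $\varphi$ on $\cA$ (using that $\sum\cA^2$ is closed, so equal to the intersection of the halfspaces $\{\varphi\ge 0\}$). Given such $\varphi$, restrict $\varphi$ to $\cB$, form the GNS representation $\rho=\pi_{\varphi|_\cB}$ of $\cB$ (a cyclic $*$-representation), check it is inducible — this is where the concrete structure of $p$ as the averaging/orbit projection on $\dR[V]$ enters, exactly as in the verification that $\langle[f]_\chi,[g]_\chi\rangle$ is positive semidefinite — and then one has $QM_\cA(\ind\rho)=\ind(QM_\cB(\rho))\supseteq\ind(\sum\cB^2)\ni a$, whence $\langle\ind\rho(a)\xi,\xi\rangle\ge 0$; finally one recovers $\varphi(a)\ge 0$ by identifying the GNS vector of $\varphi$ (via the natural map $\cA\otimes_\cB\cD(\rho)\to\cD(\pi_\varphi)$) as a vector in $\cD(\ind\rho)$. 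Running this for all positive $\varphi$ on $\cA$ gives $a\in\sum\cA^2$.

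**Main obstacle.**
The hard part will be establishing inducibility of the relevant $*$-representations of $\cB$ — equivalently, that the form $\langle\cdot,\cdot\rangle_0$ on $\cA\otimes_\cB\cV$ is positive semidefinite — uniformly, i.e.\ for the GNS representation of an arbitrary positive functional on $\cB$ (not just for the orbit representations $T_\chi$, which are manifestly finite-dimensional and easy). By Proposition \ref{prop_ind_rep_mod} this is equivalent to inducibility of $\sum\cB^2$ itself, i.e.\ to the inclusion $\{p(x^\ast x)\mid x\in\cA\}\subseteq\sum\cB^2$; so the crux is the purely algebraic statement that $p(f^2)\in\sum\cB^2$ for every $f\in\dR[V]$, where $p$ is the average over $G$. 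This should follow because $p(f^2)=\frac1{|G|}\sum_g\alpha_g(f)^2$, each $\alpha_g(f)^2$ being a square in $\cA$ whose $G$-average is forced into $\cB$; but one must check it lands in $\sum\cB^2$ and not merely in $\sum\cA^2\cap\cB$ — and here the geometric description of $\cB=\dR[V]^G$ and Proposition \ref{prop_indB_+} (applied to $\cA_+$ rather than $\sum\cA^2$) would be the natural substitute if the naive square-sum argument fails. If that algebraic inclusion holds, the representation-theoretic machinery above closes the argument; if it fails, the conjecture is genuinely open, which is consistent with the paper flagging it as unsolved.
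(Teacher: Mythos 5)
This is one of the statements the paper explicitly leaves open (Conjecture 5.3 is stated and immediately followed by discussion of cases where it reduces to Proposition \ref{prop_indB_+}), so the paper contains no proof to compare against; the question is whether your argument actually closes the gap the authors flag. It does not, and there are two distinct problems.

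First, you repeatedly conflate $\sum\cB^2$ with $\sum\cA^2\cap\cB$. You reduce the conjecture to the "purely algebraic statement that $p(f^2)\in\sum\cB^2$ for every $f\in\dR[V]$," i.e.\ to inducibility of $\sum\cB^2$. This inclusion is false in the simplest example: $\cA=\dR[x]$, $G=\dZ/2$ acting by $x\mapsto -x$, $\cB=\dR[x^2]$. Then $p(x^2)=x^2$, and $x^2\notin\sum\dR[x^2]^2$ (every nonzero square in $\dR[x^2]$ has $x$-degree $\equiv 0\pmod 4$). So $\sum\cB^2$ is generally not inducible. Fortunately inducibility of the \emph{relevant} quadratic module $\sum\cA^2\cap\cB$ is automatic (the averaging $p$ is a conditional expectation, so $\{p(x^*x)\mid x\in\cA\}\subseteq p(\sum\cA^2)=\sum\cA^2\cap\cB$), so this part of your plan is repaired by replacing $\sum\cB^2$ by $\sum\cA^2\cap\cB$ throughout. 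But that also means the inclusion you identified as the "crux" is not where the difficulty lies.

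Second, and fatally, the closing step does not work. You take a positive functional $\varphi$ on $\cA$, form $\rho=\pi_{\varphi|_\cB}$, and want to "recover $\varphi(a)\ge 0$ by identifying the GNS vector of $\varphi$ (via the natural map $\cA\otimes_\cB\cD(\rho)\to\cD(\pi_\varphi)$) as a vector in $\cD(\ind\rho)$." The natural surjection $x\otimes[b]\mapsto[xb]$ is well defined, but it is not an isometry and does not descend to one: on $\cA\otimes_\cB\cD(\rho)$ the inner product is
\[
\langle x\otimes[\Un{}],\,x\otimes[\Un{}]\rangle_0=\langle\rho(p(x^*x))[\Un{}],[\Un{}]\rangle=\varphi(p(x^*x)),
\]
whereas in $\cD(\pi_\varphi)$ one has $\|[x]\|^2=\varphi(x^*x)$, and for the averaging $p$ these are different (take $\varphi$ to be evaluation at a non--fixed point $\chi\in V$). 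So the cyclic vector $\xi_\varphi$ of $\pi_\varphi$ does not embed isometrically into $\cD(\ind\rho)$, and positivity of $\ind\rho(a)$ only yields $\varphi(p(x^*ax))\ge 0$ for all $x$, in particular $\varphi(p(a))\ge 0$. Running this over all positive $\varphi$ gives $p(a)\in\sum\cA^2$, which is already immediate from $\res(\ind\cN)\subseteq\cN$ in Proposition \ref{prop_indres}; it does not give $a\in\sum\cA^2$. (Example \ref{gnsind} exhibits $\pi_\varphi$ as an induced representation, but via the bimodule projection $\frac{1}{\varphi(1)}\varphi\colon\cA\to F$, not via the averaging $p\colon\cA\to\cB$; those two inductions are genuinely different unless $\varphi$ factors through $p$.) The mechanism for passing from control of $\varphi\circ p$ to control of $\varphi$ is exactly the missing ingredient, and nothing in your sketch supplies it. This is consistent with the paper flagging the statement as an open conjecture.
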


\medskip
When $\sum\cA^2=\cA_+$, the conjecture follows from Proposition \ref{prop_indB_+}.
In particular, this is true if $\cA=\dR[x,y]/(x^2+y^2-1)\simeq\dR[S^1]$ by the Riesz-Fejer Theorem, see \cite[p. 117]{riesz}
or $\cA=\dR[x,y,z]/(x^2+y^2+z^2-1)\simeq\dR[S^2]$ by a result of Scheiderer, see \cite[Th. 3.2]{claus2}. For the action take
e.g. $G=\dZ/2\dZ$ with $(x,y)\mapsto(x,-y)$ and $(x,y,z)\mapsto(x,y,-z)$ respectively.

Let $\cA=\dR[x,y,z]/(x^2+y^2-1)\simeq\dR[S^1\times\dR^1]$ and let $\dZ/2\dZ$ act on $\cA$ by $(x,y,z)\mapsto(x,-y,z).$ 
It is not known yet whether $\sum\cA^2=\cA_+$. However, if the conjecture is true, this follows from 
Marshall's strip theorem \cite{mar1}. Namely, 
$\cB=\dR[x,y^2,z]/(x^2+y^2-1)\simeq\dR[x,z]$ and $\cA_+ \cap \cB=\operatorname{Pos}([-1,1]\times\dR^1)
= \sum \cB^2+(1-x^2) \sum \cB^2=\sum \cA^2 \cap \cB$ by \cite{mar1}, hence
$\cA_+=\Ind^p(\cA_+ \cap \cB)=\ind^p(\sum\cA^2\cap\cB)=\sum\cA^2$ by Proposition \ref{prop_indB_+}
and the conjecture.
\medskip

Proposition \ref{prop_indB_+} can be easily extended to compact groups (just replace sums with integrals over the Haar measure.)
On the other hand, Example \ref{czzb} will show that Conjecture \ref{conj2} fails for infinite compact groups.
(For convenience we will consider a complex $*$-algebra.)

\section{Group graded $\ast$-algebras}\label{subsect_grad}

Let $G$ be a discrete group and let $\cA$ be a $G$-graded $*$-algebra, that is $\cA=\bigoplus_{g \in G} \cA_g,$
where $\cA_g$ are linear subspaces such that
$$\cA_g+\cA_g \subseteq \cA_g, \quad -\cA_g \subseteq \cA_g, \quad \cA_g \cdot \cA_h \subseteq \cA_{gh}, \quad (\cA_g)^\ast \subseteq \cA_{g^{-1}}.$$
Write $\cB=\cA_\id$ where $\id\in G$ is the identity element. Then $\cB$ is a $*$-subalgebra of $\cA.$ Denote by $p$ the canonical projection of $\cA$ onto $\cB$, that is
\begin{gather}\label{eq_p_grad}
p:\cA \to \cB, \quad p(\sum_{g \in G} a_g)= a_\id,
\end{gather}
where $a_g\in\cA_g.$ It is easy to show that $p$ is a conditional expectation of $\cA$ onto $\cB,$ see e.g. \cite[Proposition 6]{ss}.

The elements of $\bigcup_{g \in G} \cA_g$ are called \emph{homogeneous}. For every homogeneous $a \in \cA$ we have $a^\ast \cB a \subseteq \cB$ and $p(a^\ast x a)=a^\ast p(x) a$ for every $x \in \cA$. Note also that the set $\sum \cA^2 \cap \cB$ consists of all finite sums of elements $a^\ast a$ with $a$ homogeneous. In particular, a q.m. $\cN\subseteq\cB$ is inducible if and only if $\cN$ contains all $a^\ast a$ with $a$ homogeneous.


\begin{prop}\label{prop_graded}
Let $p$ be defined by (\ref{eq_p_grad}).
\begin{enumerate}
\item[(i)] If $\cN\subseteq\cB$ is an inducible quadratic module then
$$\Ind\cN\cap\cB=\Res(\Ind\cN)$$
\item[(ii)] For every q.m. $\cM\subseteq\cA$ we have
$$\Res\cM\subseteq\Ind(\Res\cM).$$
\end{enumerate}
\end{prop}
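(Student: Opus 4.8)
The plan is to mimic the proof of Proposition \ref{prop_aver}, replacing the averaging formula $p(a)=\frac{1}{|G|}\sum_g \alpha_g(a)$ by the algebraic identity available in the graded setting: for every homogeneous $a$ we have $p(a^\ast x a)=a^\ast p(x) a$, and hence for a general $a=\sum_{g}a_g$ and $x\in\cA$,
\begin{equation*}
p(x^\ast p(a) x)=\sum_{g}p\big(x^\ast a_g^{} x\big),
\end{equation*}
where the sum is over those $g$ for which $a_g\ne 0$ (a finite set). Wait — this needs a small correction: $p(a)$ is just $a_\id$, the identity component, so one should instead track $p(x^\ast a_g x)$ for each homogeneous component. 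Let me restate the key computation I intend to use. For a homogeneous $y\in\cA_g$ and arbitrary $x\in\cA$ one has $p(x^\ast y^\ast \cdot z \cdot y x)$-type identities; more directly, writing $x=\sum_h x_h$ in homogeneous components, $p(x^\ast a_g^{} x)=\sum_{h}p(x_h^\ast a_g^{} x_h)$ since a product $x_h^\ast a_g^{} x_k$ lies in $\cA_{h^{-1}gk}$, which is $\cA_\id$ only when $h^{-1}gk=\id$, and when $g=\id$ this forces $h=k$. This is the graded analogue of the index-collapsing in (\ref{eq_aux2}).

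First I would prove (i). The inclusion $\Ind\cN\cap\cB=p(\Ind\cN\cap\cB)\subseteq p(\Ind\cN)=\Res(\Ind\cN)$ is immediate from (CE2)/(CE4) since elements of $\cB$ are fixed by $p$. For the reverse inclusion, take $a\in\Ind\cN$; I must show $p(a)\in\Ind\cN$, i.e. $p(x^\ast p(a) x)\in\cN$ for all $x\in\cA$. Since $p(a)=a_\id\in\cB$ and $\cB$ is graded-trivial, I expand $x=\sum_h x_h$ and compute
\begin{equation*}
p(x^\ast p(a) x)=\sum_{h,k}p(x_h^\ast a_\id^{} x_k^{})=\sum_{h}p(x_h^\ast a_\id^{} x_h^{}),
\end{equation*}
using that $x_h^\ast a_\id^{} x_k^{}\in\cA_{h^{-1}k}$ contributes to $p$ only when $h=k$. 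Now for each $h$, pick any homogeneous $y\in\cA_{h^{-1}}$? No — instead I use that $a\in\Ind\cN$ directly: I need $p(x_h^\ast a_\id^{} x_h^{})\in\cN$. Here I replace $a_\id$ by $a$ at the cost of the other homogeneous components: observe $p(x_h^\ast a x_h^{})=\sum_{g}p(x_h^\ast a_g^{} x_h^{})$, and each summand $p(x_h^\ast a_g^{} x_h^{})\in\cA_{h^{-1}gh}\cap\cA_\id$, which is nonzero only when $g\in h\,C\,h^{-1}$ with... this needs care. The clean route: since $a\in\Ind\cN$, in particular $p(x_h^\ast a x_h^{})\in\cN$; but I want only the $\id$-component $a_\id$. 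The remedy is to note that $\Ind\cN$ is automatically invariant under the grading projections — more precisely, one shows each homogeneous-component-type map preserves $\Ind\cN$ — OR, more simply, one shows directly that $a\in\Ind\cN\Rightarrow a_\id\in\Ind\cN$ by averaging over a suitable character group when $G$ is abelian, and in general by the identity $p(x^\ast a_\id x)=\sum_h p(x_h^\ast a x_h)$, which holds because $p(x_h^\ast a_g^{} x_h^{})$ vanishes unless $h^{-1}gh=\id$, i.e. $g=\id$. That last identity is exactly what I need: it expresses $p(x^\ast a_\id x)$ as a finite sum of elements $p(x_h^\ast a x_h)\in\cN$, hence in $\cN$.

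For (ii), take $a\in\cM$ and $x\in\cA$; I must show $p(x^\ast p(a) x)\in\Res\cM$. By the same component computation, $p(x^\ast p(a) x)=\sum_h p(x_h^\ast a_\id^{} x_h^{})=\sum_h p(x_h^\ast a x_h^{})$ (again because off-diagonal and non-identity-graded terms drop out under $p$), and each $x_h^\ast a x_h^{}\in\cM$ by (QM2), so $p(x_h^\ast a x_h^{})\in\Res\cM$; since $\Res\cM$ is a quadratic module (hence closed under addition) the sum lies in $\Res\cM$. Therefore $p(a)\in\Ind(\Res\cM)$, and since $\Res\cM=p(\cM)$ consists of such $p(a)$, we get $\Res\cM\subseteq\Ind(\Res\cM)$.

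The main obstacle I anticipate is getting the bookkeeping of the grading exactly right in the identity $p(x^\ast a_\id x)=\sum_h p(x_h^\ast a x_h)$: one must check carefully that for $g\ne\id$ and any $h$, the term $p(x_h^\ast a_g^{} x_h^{})$ indeed vanishes (it sits in $\cA_{h^{-1}gh}$, which equals $\cA_\id$ iff $h^{-1}gh=\id$ iff $g=\id$), and that cross terms $x_h^\ast a_g^{} x_k^{}$ with $h\ne k$ never survive projection even after summing over $g$ (they sit in $\cA_{h^{-1}gk}$, and one needs $h^{-1}gk=\id$; for the $a_\id$ piece this is $h=k$). Once these elementary grading facts are in hand, both parts follow from the quadratic-module axioms exactly as in Proposition \ref{prop_aver}, with the homogeneous decomposition of $x$ playing the role of the $G$-average.
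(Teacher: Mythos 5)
Your proof is correct and follows essentially the same route as the paper: both hinge on the identity $p(x^\ast p(a)x)=\sum_h p(x_h^\ast a x_h)$ obtained by expanding $x$ into homogeneous components and observing that only diagonal terms survive $p$. The only cosmetic difference is that the paper invokes the previously noted identity $p(c^\ast a c)=c^\ast p(a)c$ for homogeneous $c$ directly, whereas you re-derive it via the degree count $x_h^\ast a_g x_h\in\cA_{h^{-1}gh}$; these are equivalent.
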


\begin{proof}
(i): Note that $\cB\cap\Ind\cN=p(\cB\cap\Ind\cN)\subseteq p(\Ind\cN)=\Res(\Ind\cN).$ 
Let $b\in\Res(\Ind\cN)$, that is $b=p(a),\ a\in\Ind\cN.$ We show that $p(a)\in\Ind\cN.$ For let $x\in\cA,\ x=\sum_{g\in G}x_g,\ x_g\in\cA_g.$ Then
\begin{gather}\label{eq_aux}
p(x^*bx)=\sum_{g\in G}x_g^*bx_g^{}=\sum_{g\in G}x_g^*p(a)x_g^{}=\sum_{g\in G}p(x_g^*ax_g^{})
\end{gather}
and the latter is contained in $\cN$ since $a\in\Ind\cN.$

(ii): Let $b\in\Res\cM$ that is $b=p(a),\ a\in\cM.$ We show that $b\in\Ind(\Res\cM).$ For let $x\in\cA,\ x=\sum_{g\in G}x_g,\ x_g\in\cA_g.$ 
Then $x_g^*ax_g^{}\in\cM,\ \forall g\in G$ and calculation (\ref{eq_aux}) shows that $p(x^*bx)\in\Res\cM.$
\end{proof}


\begin{exam}\label{czzb}
Let $\cA=\dC[z,\overline{z}]$ be the $*$-algebra of complex polynomials with involution defined by $z\mapsto\overline{z}.$ We define a $\dZ$-grading on $\cA$ by putting $z\in\cA_1,\ \overline{z}\in\cA_{-1}.$ Then $\cB:=\cA_{0}=\dC[t],$ where $t=z\overline{z}.$ By \cite[Examples 7,13]{ss} there is a canonical action of $S^1\simeq\widehat{\dZ}$ on $\cA,$ namely $e^{\img\varphi}\in S^1,\ \varphi\in[0,2\pi]$ defines an automorphism, which maps $(z,\overline z)$ to $(e^{\img\varphi}z,e^{-\img\varphi}\overline z).$ The conditional expectation $\gp:\cA\to\cB$ defined by the $\dZ$-grading coincides with the average of the action of $S^1$ on $\cA:$
\begin{gather}\label{eq_aux1}
(\gp(f))(z,\overline z)=\int_0^{2\pi}f(e^{\img\varphi}z,e^{-\img\varphi}\overline z)d\varphi,
\end{gather}
see \cite[Examples 7,13]{ss}. Similarly to the Proposition \ref{prop_indB_+} it can be shown that $\Ind\cB_+=\cA_+.$ On the other hand $\cB_+$ consists of all polynomials positive on $\dR_+,$ hence
$$
\cB_+=\sum\dC[t]^2+t\dC[t]^2=\sum\cA^2\cap\cB=\Res\sum\cA^2.
$$
Thus we get $\Ind(\Res\sum\cA^2)=\cA_+$ which is known to be larger than $\sum\cA^2,$ see Motzkin's counterexample in \cite{mar}. Then Proposition \ref{prop_indres}, (1) implies that $\sum\cA^2\neq\Ind\cN$ for any q.m. $\cN\subseteq\cB$.
\end{exam}

\begin{exam}
The following $*$-algebra is a $q$-analogue of $\dC[z,\ov z]:$
$$\cA=\dC\langle x,x^*\mid\ xx^*=qx^*x\rangle,$$
where $q>0,\ q\neq 1$ is fixed. Positivity and sums of squares in $\cA$ were studied in \cite{css}.

$\cA$ has a natural $\dZ$-grading such that $x\in\cA_1,\ x^*\in\cA_{-1}.$ Then $\cB:=\cA_0$ is isomorphic to $\dC[t],$ where $t=x^*x.$ Let $\cB_+$ be the set of polynomials $f(t)\in\cB$ nonnegative on $\dR_+.$ Then $\sum\cA^2\cap\cB=\sum\cB^2+t\sum\cB^2=\cB_+.$ Denote by $\gp:\cA\to\cB$ the conditional expectation defined by the grading. The set of positive elements $\cA_+$ defined in \cite{css} has the following characterization. An element $f\in\cA_h$ is positive if and only if $\gp(y^*fy)\in\cB_+$ for all $y\in\cA.$ Equivalently, $\cA_+=\Ind\cB_+.$ Clearly we have $QM_\cA(\sum\cB^2+x^*x\sum\cB^2)=\sum\cA^2.$ By \cite[Theorem 2]{css} we have $\sum\cA^2\neq\cA_+,$ showing that $\cB_+$ is not perfect and that $\Ind(\Res\sum\cA^2)\neq \sum\cA^2,$ that is $\sum\cA^2$ is not induced.
\end{exam}

\begin{exam}\label{discex}
Let $G$ be a discrete group, $\cA=\dC[G]$ be its group $*$-algebra and $\cA_+$ be the set of elements which are positive in all unitary $*$-representations. Then $\cA$ is $G$-graded and $\cB$ can be identified with $\dC.$ Clearly, $\cN=\dR^+$ is the only quadratic module in $\cB$ and $QM_\cA(\cN)=\sum\cA^2.$ The $*$-representation $\pi_{reg}$ induced from $\cB=\dC$ to $\cA$ via $p$ is canonically isomorphic to the left regular representation of $\cA.$ By Proposition \ref{prop_ind_rep_mod} we have $\Ind\cN=QM_\cA(\pi_{reg}).$ Hence $\cN$ is perfect if and only if $QM_\cA(\pi_{reg})=\sum\cA^2.$

A well-known fact in the group theory is that every unitary representation of $G$ is weakly contained in $\pi_{reg}$ if and only if $G$ is amenable
(see e.g. \cite[Theorem 3.5.2]{greenleaf}). 
By Proposition \ref{prop_weakly}, if $G$ is not amenable then $QM_\cA(\pi_{reg})$ is strictly larger than $\cA_+\supseteq\sum\cA^2,$ so that $\cN$ 
is not perfect.

Let $G=\dZ^k.$ Then $\cA=\dC[\dZ^k]$ is isomorphic to the algebra $\dC[\dT^k]$ of polynomials on the $k$-dimensional torus. 
By the Riesz-Fejer Theorem (\cite[p. 117]{riesz}) and Scheiderer's theory (\cite[Prop. 6.1]{claus1} and \cite[Th. 3.2]{claus2}) 
we have $\sum\cA^2=\cA_+$ if and only if $k=1$ or $2.$ That is, $\cN$ is perfect if and only if $k=1$ or $2.$
\end{exam}

\section{Galois extensions}\label{sect_gal}

Let $(K,\ge)$ be an ordered field, $\bR$ its real closure and $\bC=\bR[i]$. Let $L$ be a finite extension of $K$
of degree $s$. We assume that both $L$ and $K$ have the trivial involution $\ast=\id$. Then the normalized trace 
$$
\tr_{L/K}=\frac{1}{s} \operatorname{tr}_{L/K}:L\to K
$$ 
is a bimodule projection from $L$ onto $K.$ 
By the Primitive Element Theorem, there exists $\theta \in C$ such that $L=K(\theta)$. 
Let $P(x)$ be the minimal polynomial of $\theta$ over $K$. Then $P(x)$ is irreducible, has degree $s$ and $L=K[x]/(P(x))$. 
Let $\theta_1,\ldots,\theta_s$ be all zeros of $P(x)$ in $\bC$. 
We may assume that $\theta_1,\ldots,\theta_r$ belong to $\bR$ and $\theta_{r+1},\ldots,\theta_s$ belong to $\bC \setminus \bR$.

Pick any $Q(x)\in K[x]$ which is not divisible by $P(x)$ and consider the quadratic form (from $K^s$ to $K$), see \cite[4.3.2]{bpr},
$$
\Her(P,Q) \colon (f_1,\ldots,f_s) \mapsto \tr_{L/K}(Q(\theta) (f_1+f_2 \theta+\ldots+f_s \theta^{s-1})^2).
$$
By the first part of \cite[Thm. 4.57]{bpr}, this form is nondegenerate (i.e. its rank is equal to $s$.)
By the second part of \cite[Thm. 4.57]{bpr}, the signature of $\Her(P,Q)$ is equal to $\operatorname{sign} Q(\theta_1)+\ldots+\operatorname{sign} Q(\theta_r)$. 
Inserting $Q=1$, we get that the signature of $\Her(P,1)$ is equal to $r$ (see also \cite[Thm. 4.58]{bpr}). We summarize the discussion above in

\begin{prop}\label{prop_ind_field_ext}
The following claims are equivalent:
\begin{enumerate}
\item the ordering $\ge$ is inducible (for the bimodule projection $\tr_{L/K}$, where $K$ and $L$ have identity involutions),
\item $\tr_{L/K}((f_1+f_2 \theta+\ldots+f_s \theta^{s-1})^2) \ge 0$ for all $f_1,\ldots,f_s \in K$,
\item the signature of $\Her(P,1)$ is equal to $s$,
\item $r=s$.
\end{enumerate}
If $\ge$ is inducible then for $a=Q(\theta)\in L$ the following claims are equivalent:
\begin{enumerate}
\item[(a)] $a \in \Ind(\ge)$,
\item[(b)] $\tr_{L/K}(Q(\theta)(f_1+f_2 \theta+\ldots+f_s \theta^{s-1})^2) \ge 0$ for all $f_1,\ldots,f_s \in K$,
\item[(c)] the signature of $\Her(P,Q)$ is equal to $s$,
\item[(d)] $Q(\theta_1)>0,\ldots,Q(\theta_s)>0$ (recall that $r=s)$,
\item[(e)] $a$ belongs to every ordering of $L$ extending $\ge$.
\end{enumerate}
\end{prop}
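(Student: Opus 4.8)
The plan is to establish the two chains of equivalences using the quadratic-form machinery from \cite{bpr} that was recalled just before the statement, treating the "inducibility" block first and then the block characterizing membership in $\Ind(\ge)$.

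\textbf{The inducibility equivalences.} First I would unwind the definition: by the matrix/trace formalism (or directly from the definition of $\Ind$ applied to the bimodule projection $\tr_{L/K}$ and the quadratic module $\cN=K^{\ge 0}$), the ordering $\ge$ is inducible precisely when $\tr_{L/K}(x^\ast x)=\tr_{L/K}(x^2)\ge 0$ for every $x\in L$; since $\{1,\theta,\ldots,\theta^{s-1}\}$ is a $K$-basis of $L$, writing $x=f_1+f_2\theta+\cdots+f_s\theta^{s-1}$ shows this is exactly condition (2). Next, (2) says the quadratic form $\Her(P,1)$ is positive semi-definite over $(K,\ge)$; but that form is non-degenerate of rank $s$ by the first part of \cite[Thm.~4.57]{bpr}, so positive semi-definiteness forces it to be positive definite, i.e. its signature is $s$ — giving (2)$\Leftrightarrow$(3), with the reverse implication being immediate (positive definite $\Rightarrow$ all values $\ge 0$). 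Finally, the signature of $\Her(P,1)$ equals $\operatorname{sign}Q(\theta_1)+\cdots+\operatorname{sign}Q(\theta_r)$ with $Q=1$, which is $r$; so signature $=s$ is equivalent to $r=s$, i.e. all roots of $P$ are real, giving (3)$\Leftrightarrow$(4). (One should note $r\le s$ always, and the signature is bounded by the number of real contributions, so this is a clean equivalence.)

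\textbf{The membership equivalences.} Assume now $\ge$ is inducible, so $r=s$, and fix $a=Q(\theta)$ with $P\nmid Q$. Again by the definition of $\Ind$, $a\in\Ind(\ge)$ iff $\tr_{L/K}(x^\ast a x)=\tr_{L/K}(Q(\theta)x^2)\ge 0$ for all $x\in L$; expanding $x$ in the basis gives (a)$\Leftrightarrow$(b). Statement (b) asserts $\Her(P,Q)$ is positive semi-definite; since that form is non-degenerate of rank $s$ (first part of \cite[Thm.~4.57]{bpr}), semi-definiteness again upgrades to definiteness, so (b)$\Leftrightarrow$(c). By the signature formula (second part of \cite[Thm.~4.57]{bpr}), $\operatorname{sign}\Her(P,Q)=\sum_{k=1}^{r}\operatorname{sign}Q(\theta_k)=\sum_{k=1}^{s}\operatorname{sign}Q(\theta_k)$ using $r=s$; this equals $s$ iff every term is $+1$, i.e. $Q(\theta_k)>0$ for all $k$ (note $Q(\theta_k)\ne 0$ because $P\nmid Q$ and $P$ is the minimal polynomial of each $\theta_k$ over $K$ — here one uses that all $\theta_k$ are real and conjugate over $K$), giving (c)$\Leftrightarrow$(d). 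The last step (d)$\Leftrightarrow$(e) is the one genuinely about field orderings: since $L/K$ is now a totally real extension, the orderings of $L$ extending $\ge$ correspond bijectively to the embeddings $\sigma_k\colon L\to\bR$ sending $\theta\mapsto\theta_k$, with the induced ordering declaring $b>0$ iff $\sigma_k(b)>0$. Thus $a=Q(\theta)$ lies in every extending ordering iff $\sigma_k(a)=Q(\theta_k)>0$ for every $k$, which is precisely (d).

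\textbf{Main obstacle.} Most of the work is bookkeeping with \cite[Thm.~4.57]{bpr}; the only step requiring independent justification is (d)$\Leftrightarrow$(e), namely the correspondence between orderings of $L$ extending $\ge$ and the real embeddings of $L$ over $(K,\ge)$ into the real closure $\bR$. I would supply this via the standard Artin–Schreier / real closure argument: an ordering on $L$ extending $\ge$ determines (and is determined by) a $K$-embedding of $L$ into $\bR$, and under $L=K[x]/(P(x))$ such embeddings are in bijection with the roots of $P$ lying in $\bR$, which — since $\ge$ is inducible — is all $s$ of them. The subtlety to be careful about is that this uses $r=s$ in an essential way; if $\ge$ were not inducible, $L$ could have fewer than $s$ (or even no) extending orderings, and the clean statement (d)$\Leftrightarrow$(e) would fail. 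Everything else follows by the rank/signature dichotomy: a non-degenerate form over a (real closure of an) ordered field is positive semi-definite iff positive definite iff of full signature.
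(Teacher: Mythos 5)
Your proposal is correct and follows essentially the same route as the paper, which presents the proposition as a summary of the preceding discussion built on the rank and signature parts of \cite[Thm.~4.57]{bpr}. You fill in the same steps: inducibility unwinds (via the $K$-basis $1,\theta,\ldots,\theta^{s-1}$) to positive semi-definiteness of $\Her(P,1)$; non-degeneracy of rank $s$ upgrades semi-definiteness to definiteness, i.e.\ signature $s$; and the signature formula with $Q=1$ gives $r$, hence signature $=s$ iff $r=s$. The second block is handled by exactly parallel reasoning with general $Q$, and your note that $Q(\theta_k)\neq 0$ (because $P$ is the common minimal polynomial of all the $\theta_k$ and $P\nmid Q$) is the right justification for why each $\operatorname{sign}Q(\theta_k)$ is $\pm 1$ and why ``signature $=s$'' forces every $Q(\theta_k)>0$.

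The one place where you genuinely add something beyond what the paper writes out is the equivalence (d)$\Leftrightarrow$(e): the paper leaves the bijection between orderings of $L$ extending $\ge$ and the $K$-embeddings $L\hookrightarrow\bR$ (equivalently the real roots of $P$) implicit, and you correctly flag this as the step that needs an independent Artin--Schreier/real-closure argument, and you also correctly observe that this step uses $r=s$ in an essential way. That is a useful clarification, not a deviation in method; it is the standard correspondence and it does close the proof.
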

Recall that finite extensions of rational numbers $\dQ$ are called number fields. A number field $K$ is called \emph{totally real} 
if and only if $K=\dQ[\theta]$ and the minimal polynomial $p\in \dQ[t]$ of $\theta$ has only real roots.

\begin{cor}
Let $L$ be a number field. Then $\sum \dQ^2$ is inducible if and only if $L$ is totally real. Moreover, $\Ind(\sum\dQ^2)=\sum L^2$.
\end{cor}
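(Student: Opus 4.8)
The plan is to deduce both assertions from Proposition~\ref{prop_ind_field_ext} applied with $K=\dQ$. The preliminary observation is that $\sum\dQ^2=\dQ^{\ge 0}$, the unique ordering $\ge$ of $\dQ$: a sum of squares of rationals is visibly $\ge 0$, while conversely a positive rational $m/n$ equals $(mn)(1/n)^2$ and the positive integer $mn$ is a sum of four squares by Lagrange. Hence ``$\sum\dQ^2$ is inducible'' means literally ``$\ge$ is inducible for the bimodule projection $\tr_{L/\dQ}$'', and the defining condition of $\Ind(\sum\dQ^2)$ is exactly that $\tr_{L/\dQ}(x^2 a)\ge 0$ for all $x\in L$ (recall $\ast=\id$, so $x^{\ast}ax=x^2a$).

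First I would fix, via the Primitive Element Theorem, an element $\theta$ with $L=\dQ(\theta)$, and let $P\in\dQ[x]$ be its minimal polynomial, of degree $s=[L:\dQ]$; the roots of $P$ in $\dC$ are exactly the images $\sigma(\theta)$ under the $s$ embeddings $\sigma\colon L\hookrightarrow\dC$. By the equivalence of items~(1) and~(4) of Proposition~\ref{prop_ind_field_ext}, $\ge$ is inducible if and only if $r=s$, i.e.\ if and only if $P$ has only real roots, which is precisely the assertion that $L$ is totally real. (The property ``all roots of $P$ are real'' is independent of the chosen primitive element, since these roots are just the conjugates of $\theta$, so this agrees with the stated definition of totally real.) This settles the first claim.

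For the ``moreover'' part, assume $L$ is totally real; then $\ge$ is inducible, so by Proposition~\ref{indmoddef} the set $\Ind(\sum\dQ^2)$ is a proper quadratic module in $L$. A nonzero $a\in L$ can be written as $a=Q(\theta)$ with $Q\in\dQ[x]$ of degree $<s$ (the power basis $\{1,\theta,\dots,\theta^{s-1}\}$), hence with $Q$ not divisible by $P$; the case $a=0\in\sum L^2$ is trivial on both sides. The equivalence of~(a) and~(e) in Proposition~\ref{prop_ind_field_ext} then says $a\in\Ind(\ge)$ if and only if $a$ lies in every ordering of $L$ extending $\ge$. Since $\dQ$ carries only the ordering $\ge$, every ordering of $L$ restricts to $\ge$ on $\dQ$, so $\Ind(\sum\dQ^2)$ equals the intersection of all orderings of $L$.

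Finally I would invoke the Artin--Schreier theorem: in a formally real field $F$ one has $\sum F^2=\bigcap\{\text{orderings of }F\}$ (equivalently, an element is a sum of squares iff it is nonnegative in every ordering; see e.g.\ \cite{mar}). As $L$ embeds in $\dR$, it is formally real, so this intersection is $\sum L^2$, whence $\Ind(\sum\dQ^2)=\sum L^2$. Apart from Proposition~\ref{prop_ind_field_ext}, the Artin--Schreier characterisation is the only external input, and no genuine obstacle is expected here; the only point that deserves a line of care is the routine identification, via the power basis of $L$, of the defining inequality of $\Ind(\sum\dQ^2)$ with condition~(b) (equivalently (e)) of the proposition.
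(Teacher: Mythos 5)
Your argument is correct and is the expected one: the paper states the corollary immediately after Proposition~\ref{prop_ind_field_ext} without a written proof, and your deduction—observe $\sum\dQ^2=\dQ^{\ge 0}$ (via Lagrange), apply (1)$\Leftrightarrow$(4) for the inducibility claim, and apply (a)$\Leftrightarrow$(e) together with the Artin--Schreier description of $\sum L^2$ as the intersection of all orderings of a formally real field—is exactly how one fills the gap.

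One small remark: you could shorten the second half by using (a)$\Leftrightarrow$(d) instead of (a)$\Leftrightarrow$(e). For a nonzero $a=Q(\theta)$, condition (d) says $Q(\theta_i)>0$ for every real root $\theta_i$, i.e.\ $a$ is totally positive under every real embedding of $L$; for a totally real number field the orderings of $L$ are precisely those coming from its real embeddings, and ``totally positive $\Rightarrow$ sum of squares'' is the classical Siegel/Artin statement which you are anyway invoking in the guise of Artin--Schreier. Either route is fine; yours has the merit of staying purely order-theoretic, while (d) makes the equivalence with the usual number-theoretic notion of total positivity explicit. The only point worth flagging for complete rigour is the one you already noticed: the proposition's chain (a)--(e) is stated for $Q$ not divisible by $P$, i.e.\ $a\ne 0$, and the trivial $a=0$ case has to be handled separately, which you do.
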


For the rest of this section $L$ and $K$ are formally real and $L/K$ is a Galois extension with Galois group $G.$ 
The following proposition characterizes the preorderings on $L$ which are induced via $\tr_{L/K},$ thus it can be viewed 
as an \emph{Imprimitivity Theorem for preorderings on Galois extensions}. Recall that $\cM$ is induced if and only if $\cM=\Ind(\Res\cM).$
\begin{prop}\label{prop_field_impr}
Let $\cM\subseteq L$ be a proper preordering. Then $\cM$ is induced if and only if $\cM$ is $G$-invariant, that is $g(\cM)\subseteq\cM$ for all $g\in G.$
\end{prop}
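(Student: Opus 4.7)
The plan is to derive the forward direction from Proposition~\ref{prop_aver}(i), and to establish the converse by combining Artin--Schreier with the characterization of $\Ind P_0$ in Proposition~\ref{prop_ind_field_ext}. Throughout I use that $L/K$ Galois with $L^G=K$ makes the normalized trace $\tr_{L/K}(a)=\frac{1}{|G|}\sum_{g\in G}g(a)$ coincide with the averaging projection of Section~\ref{subsect_aver}. Hence if $\cM=\Ind\cN$ for some q.m.\ $\cN\subseteq K$, Proposition~\ref{prop_aver}(i) immediately yields that $\cM$ is $G$-invariant.

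Conversely, suppose $\cM$ is a $G$-invariant proper preordering of $L$. Since Proposition~\ref{prop_indres}(1) always gives $\cM\subseteq\Ind\Res\cM$, it suffices to prove the reverse inclusion. I first identify $\Res\cM=\cM\cap K$: the inclusion $\cM\cap K\subseteq\Res\cM$ is trivial as $p$ restricts to the identity on $K$, while for $m\in\cM$ the element $p(m)=\frac{1}{|G|}\sum_{g\in G}g(m)$ lies in $\cM\cap K$ by $G$-invariance of $\cM$ and the q.m.\ axioms (using $\frac{1}{|G|}\in K^{\geq 0}$, which holds since $K$ is ordered and $|G|\in\mathbb{N}$). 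So the task reduces to proving $\Ind(\cM\cap K)\subseteq\cM$.

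To prove this, I invoke Artin--Schreier: since $L$ is formally real and $\cM$ is proper, $\cM$ equals the intersection of all orderings $P$ of $L$ containing $\cM$. Fix such a $P$ and set $P_0:=P\cap K$, so $\cM\cap K\subseteq P_0$. Writing $L=K(\theta)$, normality of $L/K$ forces all conjugates of $\theta$ to lie in $L$; therefore, as soon as one root of the minimal polynomial of $\theta$ lies in the real closure of $(K,P_0)$, all roots do. The existence of the extension $P$ of $P_0$ to $L$ guarantees at least one such root, so $r=s$ in the notation of Proposition~\ref{prop_ind_field_ext}. That proposition then says $P_0$ is inducible and identifies $\Ind P_0$ with the intersection of all orderings of $L$ extending $P_0$, which (using Galois transitivity on such extensions) is $\bigcap_{g\in G}g(P)\subseteq P$. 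Monotonicity of $\Ind$, immediate from its definition, gives $\Ind(\cM\cap K)\subseteq\Ind P_0\subseteq P$; since $P$ was an arbitrary ordering of $L$ containing $\cM$, any $a\in\Ind(\cM\cap K)$ lies in $\cM$.

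The step I expect to be the main obstacle is precisely the Galois dichotomy ``$r=0$ or $r=s$'', which is what automatically makes $P_0$ inducible and pins $\Ind P_0$ down as the intersection of the conjugates $g(P)$. Without normality this mechanism breaks, and one can easily construct $G$-invariant preorderings in non-Galois settings that fail to be induced; the rest of the argument is bookkeeping with Propositions~\ref{prop_aver}, \ref{prop_indres}, and \ref{prop_ind_field_ext}.
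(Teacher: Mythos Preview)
Your proof is correct and follows essentially the same approach as the paper: the forward direction via Proposition~\ref{prop_aver}(i), and the converse via Artin--Schreier together with Proposition~\ref{prop_ind_field_ext}(e). Your presentation is slightly more streamlined---you use monotonicity of $\Ind$ directly to get $\Ind(\cM\cap K)\subseteq\Ind P_0\subseteq P$ for every ordering $P\supseteq\cM$, whereas the paper first isolates the special case $\cM=\rho^G$ and then argues the general case by contrapositive---but the key ingredients are identical. (Incidentally, you only need $\Ind P_0\subseteq P$, which is immediate since $P$ extends $P_0$; the Galois-transitivity remark giving $\Ind P_0=\bigcap_{g\in G}g(P)$ is true but more than required.)
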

\begin{proof}
If $\cM$ is induced, then by claim (i) of Proposition \ref{prop_aver} $\cM$ is $G$-invariant.

Conversely, let $\cM$ be $G$-invariant. By Proposition \ref{prop_indres} it is enough to show $\cM\supseteq\Ind(\Res\cM).$ 
We first consider the special case when $\cM=\rho^G,$ where $\rho$ is an ordering of $L$ and $\rho^G$ denotes $\cap_{g\in G}\,g(\rho).$
It is easily seen that $\rho\cap K=\rho^G\cap K=\Res\,\rho^G,$ and every ordering $g(\rho)$ extends $\rho\cap K.$ 
By the equivalence of claims (a) and (e) of Proposition \ref{prop_ind_field_ext}, the set $\Ind(\rho\cap K)$ is equal to the intersection of all orderings on $L$ containing $\rho\cap K,$ 
in particular $\Ind(\rho\cap K)\subseteq\rho^G.$ Thus we get $\Ind(\Res\,\rho^G)=\Ind(\rho\cap K)\subseteq\rho^G.$

Now let $\cM$ be an arbitrary $G$-invariant preordering. To show that $\Ind(\Res\cM)\subseteq\cM$ pick an element $a\in L\setminus\cM.$ 
Then $\cM$ can be extended to an ordering $\rho$ of $L$ such that $a\notin\rho\supseteq\rho^G=\Ind(\Res\rho^G).$ 
That is, there exists $x\in L$ such that $\tr_{L/K}(x^2\cdot a)\notin\Res\rho^G.$ 
Since $\cM$ is $G$-invariant, we have that $\cM\subseteq\rho^G,$ and so $\tr_{L/K}(x^2\cdot a)\notin\Res\cM,$ that is $a\notin \Ind(\Res\cM).$
\end{proof}

\begin{prop}\label{prop_prefect_pr_K}
Let $\cM\subset L$ be a preordering and $\cN=\Res\cM.$ Then $\cN$ is perfect, that is $\Ind\cN=QM_L(\cN)$. 
Moreover, for every $G$-invariant preordering $\cQ\subseteq L$ we have $\cQ=QM_L(\cQ\cap K).$
\end{prop}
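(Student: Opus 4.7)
The plan is to first observe a strong structural constraint on $\cN=\Res\cM$, handle the two resulting cases, and then deduce the second assertion from the first via Proposition \ref{prop_field_impr}.

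I would start by noting that any quadratic module $\cN$ in the field $K$ (with trivial involution and fixed ordering $\ge$) must contain $K^{\ge 0}$, since $\Un{}\in\cN$ together with (QM1) gives $K^{\ge 0}\cdot\Un{}\subseteq\cN$. Moreover, if $\cN$ contains some $-a$ with $a>0$, then $-1=a^{-1}(-a)\in K^{\ge 0}\cdot\cN\subseteq\cN$, and a symmetric argument then forces every element of $K$ into $\cN$. So $\cN$ is either the improper q.m.\ $K$ (in which case $\Ind K=L=QM_L(K)$ trivially, since $-1\in K$ propagates to $L$) or exactly $K^{\ge 0}$; the remaining work is in the latter case.

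Since $\Res\cM$ is always inducible, $\Ind K^{\ge 0}$ is a q.m., and Proposition \ref{prop_ind_field_ext}(e) (applicable because inducibility forces $r=s$) identifies it with the intersection of all orderings of $L$ extending $\ge$. Next I would verify that $QM_L(K^{\ge 0})$ is itself a preordering in $L$: the product $(k_1 y_1^2)(k_2 y_2^2)=(k_1 k_2)(y_1 y_2)^2$ with $k_1 k_2\in K^{\ge 0}$ shows closure under multiplication. It is also proper, because every generator $k y^2$ with $k\in K^{\ge 0}$ is nonnegative in any ordering of $L$ extending $\ge$, and such orderings exist by inducibility. Any ordering of $L$ containing $QM_L(K^{\ge 0})$ restricts on $K$ to an ordering containing $K^{\ge 0}$, hence coincides with $K^{\ge 0}$, so the orderings of $L$ containing $QM_L(K^{\ge 0})$ are precisely those extending $\ge$. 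By the classical Positivstellensatz for fields (every proper preordering of a field equals the intersection of its containing orderings; see e.g.\ \cite{bpr}), $QM_L(K^{\ge 0})=\cap_{\rho\supseteq\ge}\rho=\Ind K^{\ge 0}$, completing the first assertion.

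For the second assertion, let $\cQ\subseteq L$ be a $G$-invariant preordering. For any $q\in\cQ$, $G$-invariance gives $g(q)\in\cQ$ for each $g\in G$, so $p(q)=\tfrac{1}{|G|}\sum_{g\in G}g(q)\in\cQ$ (closure under sums and under multiplication by $\tfrac{1}{|G|}\in K^{\ge 0}$), and clearly $p(q)\in K$; conversely $p|_K=\id$ shows $\cQ\cap K\subseteq\Res\cQ$. Thus $\Res\cQ=\cQ\cap K$. Proposition \ref{prop_field_impr} yields $\cQ=\Ind\Res\cQ=\Ind(\cQ\cap K)$, and applying the first assertion with $\cM=\cQ$ gives $\Ind(\cQ\cap K)=QM_L(\cQ\cap K)$, so $\cQ=QM_L(\cQ\cap K)$. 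The main delicate point is invoking the field Positivstellensatz in the correct form; everything else reduces to routine manipulations with the trace and the Galois action.
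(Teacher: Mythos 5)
Your proof is correct, and it takes a genuinely different route from the paper's. Your key observation---that a quadratic module in the ground field $K$ must contain $K^{\ge 0}$ (by (QM1) and (QM3)) and, if proper, must actually equal $K^{\ge 0}$---is a nice structural fact that the paper does not exploit. This reduces the first assertion to showing $\Ind K^{\ge 0}=QM_L(K^{\ge 0})$ in the case $\cN\ne K$, which you then settle by identifying both sides as the intersection of all orderings of $L$ extending $\ge$: the left side by Proposition \ref{prop_ind_field_ext}(e), the right side by checking that $QM_L(K^{\ge 0})$ is a proper preordering whose containing orderings are exactly those extending $\ge$, and then invoking the Artin--Schreier description of proper preorderings in fields. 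The paper instead argues abstractly without ever identifying $\cN$: writing $\cP=\Ind\cN$, it uses the Galois correspondence (Propositions \ref{prop_indres} and \ref{prop_aver}) to get $\cN=\cP\cap K$, which immediately yields $QM_L(\cN)\subseteq\cP$; the reverse inclusion is obtained by showing that every ordering of $L$ containing $\cP\cap K$ contains $\cP$, via $G$-invariance of $\cP$ (from Proposition \ref{prop_field_impr}) combined with Proposition \ref{prop_ind_field_ext}(e). Both approaches lean on the same field-theoretic ingredients, but your reduction to the concrete q.m.\ $K^{\ge 0}$ makes the argument more direct and self-contained, while the paper's version is phrased in the $\Ind$/$\Res$ adjunction machinery that the section is built around, which keeps the proof uniform with the surrounding development. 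Your treatment of the second assertion matches the paper's essentially verbatim (one small point shared with the paper: Proposition \ref{prop_field_impr} is stated for proper preorderings, so the improper case $\cQ=L$ should be noted as trivial, as you did implicitly for $\cM$).
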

\begin{proof}
Write $\cP=\Ind\cN.$ We show that $\cP=QM_L(\cN).$ Using Propositions \ref{prop_indres} and \ref{prop_aver} 
we have that $\cN=\Res(\Ind\cN)=\cP\cap K$ which implies the inclusion $QM_L(\cN)=QM_L(\cP\cap K)\subseteq\cP.$

Since $\cP\cap K$ is multiplicative, $QM_L(\cP\cap K)$ is a preordering. 
Hence, we can prove $QM_L(\cP\cap K)\supseteq\cP$ by showing that every ordering on $L$ which contains $\cP\cap K$ also contains $\cP.$ 
Assume to the contrary that $\rho$ is an ordering on $L$ which does not contain $\cP$ but contains $\cP\cap K$ and let $l\in\cP\setminus\rho.$ 
Then, since $\cP$ is $G$-invariant by Proposition \ref{prop_field_impr}, we have that $\tr_{L/K}(l\cdot\sum L^2)\subseteq\cP\cap\cK.$ 
By the equivalence of claims (b) and (e) of Proposition \ref{prop_ind_field_ext}, we have that $l$ belongs to every ordering of $L$
which contains $\cP\cap\cK$. In particular, we have that $l\in\rho$ which is a contradiction.

Let $\cQ$ be a $G$-invariant preordering, so that $\Res\cQ=\cQ\cap\cK.$ Then by Proposition \ref{prop_field_impr} we have
$$
\cQ=\Ind(\Res\cQ)=\Ind(\cQ\cap K)
$$
and since $\Res\cQ$ is perfect, the latter equals to $QM_L(\cQ\cap K).$
\end{proof}

\section{Cyclic algebras}\label{sect_csa}

Let $K$ be a field and let $\gA$ be a central simple algebra over $K$ with involution ${}^*.$ 
We define the \emph{normalized trace} of $\gA$ by 
$$
\tr_{\gA/K}=\frac{1}{n}\operatorname{trd}_{\gA/K}=\frac{1}{n^2}\operatorname{tr}_{\gA/K}:\gA\to K
$$
where $n^2=\dim_K\gA$ and $\operatorname{tr}_{\gA/K}$ (resp. $\operatorname{trd}_{\gA/K}$)
is the trace (resp. the reduced trace) of $\gA$. 
Note that $\tr_{\gA/K}$ is a bimodule projection.
An ordering $\rho$ of $K$ is called a \emph{$*$-ordering} 
if $\tr_{\gA/K}(a^*a)\in\rho$ for all $a\in\gA.$ Let the set of $*$-orderings be non-empty. 
An element $a=a^*$ is called \emph{positive} if $a\in\Ind\rho$ for all $*$-orderings $\rho$ of $K.$ 
The set of all positive elements in $\gA$ is denoted by $\gA_+.$ These notions were first introduced in \cite{ps}. 
Clearly, $*$-orderings are exactly inducible orderings, and the set of positive elements is $\Ind P_K,$ 
where $P_K$ is the intersection of all $*$-orderings on $K.$

We restrict the study of induced q.m.'s to the case when $\gA$ is a cyclic algebra $(L/K,\sigma,a)$ 
associated to a cyclic Galois extension $L/K$ of order $n>1.$ That is, there exist fixed elements 
$e\in\gA,\ a\in K^{\times}$ and a generator $\sigma$ of $\Gal(L/K)\simeq\cyc{n}$ such that
\begin{gather}\label{eq_gA_cyclic}
 \gA=\Un{}\cdot L\oplus e\cdot L\oplus\dots e^{n-1}\cdot L,\ e^n=a\cdot\Un{},\ \mbox{and}\\
 \nonumber l\cdot e=e\cdot\sigma(l),\ \mbox{for}\ l\in L.
\end{gather}
We identify $L$ with $L\cdot\Un{},$ and denote by $\gp_{\gA/L}:\gA\to L$ the canonical projection. 
It follows from the general theory, that $$\tr_{\gA/K}=\tr_{L/K}\circ\gp_{\gA/L}.$$ 

Recall, that $L$ is a splitting field of $\gA,$ that is $\gA\otimes_{K}L\simeq M_n(L).$ The action of $\cyc{n}$ on $\gA\otimes_{K}L$ is defined by $\sigma(x\otimes l)=x\otimes\sigma(l),\ x\in\gA,\ l\in L.$ Clearly, the stable subalgebra of $M_n(L)$ coincides with $\gA\otimes_K K\simeq\gA.$ Denote by $\gP$ the average mapping from $M_n(L)$ to $\gA.$ We describe the embedding $\gA\hookrightarrow M_n(L)$ and projection $\gP:M_n(L)\to\gA$ explicitly. For every $l\in L$ and $e$ define the matrices
\begin{gather}\label{eq_eps_matr_form}
\epsilon(l)=\sum_{i=1}^{n}E_{ii}\otimes\sigma^{i-1}(l),\ \epsilon(e)=E_{1,n}\otimes a+\sum_{i=1}^{n-1}E_{i,i+1}\otimes 1,
\end{gather}
where $E_{ij}\in M_n(L)$ is the matrix having $1$ at the $(i,j)$-place and $0$ elsewhere. Then
\begin{gather*}
\epsilon(l)=\left(
\begin{array}{ccccc}
    l   	&  0		&   \dots	&   		&  0  			\\
    0   	& \sigma(l) 	&   		&   		&  \vdots   		\\
    \vdots    	& 	 	& \sigma^2(l)  	&   		&     			\\
        	&   		&  		& \ddots  	&     			\\
        	&   		&   		&   		&  0  			\\
    0		& \dots		&   		&  0 		&    \sigma^{n-1}(l)
\end{array}
\right),\ \epsilon(e)=\left(
\begin{array}{ccccc}
  0 		& \dots		&   			& \dots	 & a      	\\
  1 		& 0 			&   			&   		 &      						\\
    		& 1				& \ddots 	&   		 &      						\\
    		&   			& \ddots 	&     	 &      						\\
    		&   			&   			& 			 & 							\\
    		&   			&   			& 		 1 & 0
\end{array}
\right)
\end{gather*}
It is easily seen that $\epsilon(e)^n=a\cdot\Id,\ \mbox{and} \ \epsilon(l)\cdot \epsilon(e)=\epsilon(e)\epsilon(\sigma(l)).$ 
Hence $\epsilon$ defines an embedding of $\gA$ into $M_n(L),$ and we can identify $\epsilon(\gA)$ with $\gA.$ 
Let $\gP:M_n(L)\to\gA$ be the additive mapping defined by
\begin{gather*}
\gP(E_{mk}\otimes l):=\frac{1}n e^{m-k}\sigma^{-k+1}(l),\ l\in L,\ m,k=1,\dots,n.
\end{gather*}
It can be checked by direct computations, that $\gP\circ\epsilon$ is identity on $\gA.$ It is also easily checked that $\gp_{\gA/L}\circ\gP:M_n(L)\to L$ coincides with $\gp_{11}$ defined by
$$
\gp_{11}(\sum_{i,j}E_{ij}\otimes l_{ij})=E_{11}\otimes l_{11}.
$$
Thus we have constructed a chain of embeddings:
\begin{gather}\label{eq_chain_emb}
K\subset L\subset\gA\subset M_n(L)
\end{gather}
and a tower of $K$-linear projections:
\begin{gather}\label{eq_tower_cexp}
M_n(L)\xrightarrow{\gP}\gA\xrightarrow{\gp_{\gA/L}} L\xrightarrow{\tr_{L/K}} K.
\end{gather}
We equip all algebras with involution so that every mapping in this chain becomes a conditional expectation.

Equations (\ref{eq_gA_cyclic}) imply that $\gA=(L/K,\sigma,a)$ is a $\cyc{n}$-graded algebra $\oplus_{k\in\dZ}\gA_k,$ where $\gA_k=e^k\cdot L.$ 
We define an involution on $\gA$ such that $\gA$ is a $\cyc{n}$-graded $*$-algebra, that is
\begin{gather}\label{eq_gA_grad_star}
\gA_k^*=\gA_{-k}^{}\ \mbox{for}\ k\in\cyc{n}.
\end{gather}
Cyclic algebras (more generally crossed-product algebras) satisfying (\ref{eq_gA_grad_star}) were studied in \cite{c} and \cite{ss2}. Then $\gA_0$ is invariant under the involution. 
We assume in addition, that involution is identity on $\gA_0\simeq L$ and consequently on $K\subseteq L.$ In particular, the involution is of the first kind \cite[Section 8]{scharlau}.
Further, we set $B=\diag{\lambda_0,\lambda_1,\dots,\lambda_{n-1}},$ where $\lambda_k:=e^{*k}e^k,\ k=1,\dots,n-1,\ \lambda_0=1$ and define an involution $\tau$ on $M_n(L)$ via $X^\tau:=B^{-1}X^TB,\ X\in M_n(L).$
\begin{prop}
Each embedding in (\ref{eq_chain_emb}) is a $*$-homomorphism, and each mapping in (\ref{eq_tower_cexp}) is a bimodule projection.
\end{prop}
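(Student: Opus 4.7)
The proposition has two halves: that each inclusion in $K \subset L \subset \gA \subset M_n(L)$ respects involutions, and that each of $\gP$, $\gp_{\gA/L}$, $\tr_{L/K}$ satisfies (CE1)--(CE4). Most checks are immediate from the setup. The involutions on $K$ and $L$ are both the identity, and the involution on $\gA$ restricts to the identity on $\gA_0 = L \supset K$, so the first two inclusions are automatically $*$-maps. Likewise $\gp_{\gA/L}$ is a bimodule projection by the general graded-algebra argument given for (\ref{eq_p_grad}), and $\tr_{L/K}$ is a bimodule projection by $K$-linearity together with $\tr_{L/K}(1)=1$ and triviality of the involutions on $K$ and $L$. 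The substantive calculations concentrate on the embedding $\epsilon \colon \gA \to M_n(L)$ and the projection $\gP \colon M_n(L) \to \gA$.

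For $\epsilon$, additivity and the decomposition $\gA = \bigoplus_k e^k L$ reduce the identity $\epsilon(a^\ast) = \epsilon(a)^\tau$ to the cases $a = l \in L$ and $a = e$. When $a = l$, the matrix $\epsilon(l)$ is diagonal and commutes with the diagonal matrix $B$, and $\epsilon(l)^T = \epsilon(l)$, hence $\epsilon(l)^\tau = \epsilon(l) = \epsilon(l^\ast)$. For $a = e$, I would first extract the formula $e^\ast = \lambda_1 e^{-1}$ from $\gA_1^\ast = \gA_{-1}$ together with $\lambda_1 = e^\ast e$, and then prove by induction the telescoping identity
\[
\lambda_k = \prod_{j=0}^{k-1} \sigma^j(\lambda_1),
\]
using the commutation relation $e^{-1} l = \sigma(l) e^{-1}$. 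Computing $B^{-1}\epsilon(e)^T B$ entry by entry and comparing it to $\epsilon(\lambda_1)\epsilon(e)^{-1}$ then reduces the check to the superdiagonal identity $\lambda_i/\lambda_{i-1} = \sigma^{i-1}(\lambda_1)$ (immediate from the product formula) and the corner identity $a^2 = \lambda_n$, which follows from $\lambda_n = e^{\ast n}e^n = a^\ast a = a^2$ since $a \in K$ has trivial involution.

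For $\gP$, additivity is built into the definition and $\gP(I) = \frac{1}{n}\sum_i e^0 \sigma^{-i+1}(1) = 1$. The bimodule property $\gP(\epsilon(a) X \epsilon(b)) = a \gP(X) b$ reduces by additivity to $X = E_{mk}\otimes l$ and to $a, b$ of the form $l' \in L$ or $e^j$, after which the shifts in row and column indices match the exponent $m-k$ appearing in the formula for $\gP$. For the $\ast$-compatibility, a direct computation with the diagonal $B$ yields $(E_{mk}\otimes l)^\tau = \lambda_{k-1}^{-1}\lambda_{m-1}\,(E_{km}\otimes l)$; applying $\gP$ to this and using $(e^j)^\ast = \lambda_j e^{-j}$ together with the commutation $e^{-j} l = \sigma^j(l) e^{-j}$ gives agreement with $\gP(E_{mk}\otimes l)^\ast$. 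The main obstacle is purely the index bookkeeping in the $\gP$ step: all the required identities are forced by how $B$, the $\lambda_k$ and the grading fit together, but care is needed so the $\sigma$-twists and the powers of $e$ line up correctly.
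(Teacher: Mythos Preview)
Your argument is correct but takes a different route from the paper. The paper's proof is essentially a pair of citations: it invokes \cite[Proposition~9.4]{ss2} for the fact that $\epsilon$ embeds $\gA$ as a $*$-subalgebra of $(M_n(L),\tau)$ and \cite[Proposition~9.6]{ss2} for the fact that $\gP$ is a bimodule projection, and then notes that $\gp_{\gA/L}$ and $\tr_{L/K}$ are the canonical projections associated with a group grading and a finite group action, respectively. Your approach instead verifies everything by hand, reducing to the generators $l\in L$ and $e$, using the product formula $\lambda_k=\prod_{j=0}^{k-1}\sigma^j(\lambda_1)$, and matching $B^{-1}\epsilon(e)^TB$ entrywise against $\epsilon(\lambda_1)\epsilon(e)^{-1}$. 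This is precisely the content of the cited propositions, so you are reproducing their proofs rather than finding a genuinely different mechanism; the gain is that your write-up is self-contained. One small point: the reduction of $\epsilon(a^*)=\epsilon(a)^\tau$ to the cases $a=l$ and $a=e$ uses not just additivity but also that $\epsilon$ is multiplicative while $*$ and $\tau$ are anti-multiplicative, so you should state that explicitly rather than attributing it to additivity and the grading alone.
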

\begin{proof}
By \cite[Proposition 9.4]{ss2}, $\gA$ is a $*$-subalgebra in $(M_n(L),\tau).$ For other inclusions the statement is trivial. By \cite[Proposition 9.6]{ss2}, the mapping $\gP$ is a bimodule projection. Further, since $\gp_{\gA/L}$ and $\tr_{L/K}$ are canonical projections coming from group grading and group action respectively, they are bimodule projections.
\end{proof}

An ordering $\rho$ of $L$ is called a $*$-\emph{ordering}, if $\gp_{\gA/L}(x^*x)\in\rho$ for all $x\in\gA$ or, equivalently, if $\rho$ is inducible to $\gA.$ Denote by $P_L$ and $P_K$ the intersection of all $*$-orderings on $L$ and $K$ respectively.
\begin{prop}\label{prop_star_ord}
${}$
\begin{enumerate}
\item[(i)] Assume there exist $*$-orderings on $L.$ Then $P_L$ is invariant under the action of $\cyc{n}.$
\item[(ii)] If $\rho$ is a $*$-ordering on $L,$ then $\rho\cap K$ is a $*$-ordering on $K.$
\item[(iii)] Let $\tau$ be a $*$-ordering on $K.$ Then $\tau$ is inducible to $L$ and $\Ind_{K\uparrow L}\tau$ is a preordering. 
Moreover, every ordering $\rho$ on $L$ containing $\Ind_{K\uparrow L}\tau$ is a $*$-ordering.
\item[(iv)] There exist $*$-orderings on $K$ if and only if there exist $*$-orderings on $L.$
\end{enumerate}
Assume that $*$-orderings exist. Then:
\begin{enumerate}
\item[(v)] $\tr_{L/K}(P_L)=P_L\cap K=P_K,$
\item[(vi)] $\Ind_{K\uparrow L}P_K=P_L,\ \Ind_{L\uparrow\gA}P_L=\Ind_{K\uparrow\gA}P_K=\gA_+.$
\end{enumerate}
\end{prop}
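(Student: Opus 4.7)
The plan is to derive everything from a single homogeneity identity,
\begin{equation*}
\gp_{\gA/L}\bigl((xe^k)^*(xe^k)\bigr)=\lambda_k\,\sigma^k\bigl(\gp_{\gA/L}(x^*x)\bigr),\qquad x\in\gA,\ k\in\dZ,
\end{equation*}
which I will establish from the homogeneous bimodule rule $\gp(a^*ya)=a^*\gp(y)a$ for $a\in\gA_k$ together with $e^*le=\lambda_1\sigma(l)$ for $l\in L$. Since $\lambda_k=\gp_{\gA/L}(e^{*k}e^k)$ is a nonzero element of $L$, it lies in every $*$-ordering $\rho$ on $L$, and so does $\lambda_k^{-1}$. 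The identity then forces $\sigma^k(\gp_{\gA/L}(x^*x))\in\rho$ for every $x$ and every $k$, so $\sigma^{-k}(\rho)$ is itself a $*$-ordering; this proves (i), as $P_L$ is the intersection of a $\cyc{n}$-permuted family. Claim (ii) will follow at once from
\begin{equation*}
\tr_{\gA/K}(a^*a)=\tfrac{1}{n}\sum_{k=0}^{n-1}\sigma^k\bigl(\gp_{\gA/L}(a^*a)\bigr),
\end{equation*}
every summand of which has just been shown to lie in $\rho$.

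For (iii), inducibility of $\tau$ will amount to $\tr_{L/K}(l^2)=\tr_{\gA/K}(l\cdot l)\in\tau$ for $l\in L$, obtained by restricting the defining property of $\tau$ to $l\in L\subseteq\gA$. Proposition \ref{prop_ind_field_ext}(e) will then identify $\Ind_{K\uparrow L}\tau$ with the intersection of all orderings of $L$ extending $\tau$, which is automatically a preordering. For an ordering $\rho\supseteq\Ind_{K\uparrow L}\tau$ and $a=\sum_ke^kl_k\in\gA$, the grading gives $\gp_{\gA/L}(a^*a)=\sum_k\lambda_kl_k^2$, so it will suffice to prove $\lambda_k\in\Ind_{K\uparrow L}\tau$; this reduces to $\tr_{L/K}(\lambda_kx^2)=\tr_{\gA/K}((e^kx)^*(e^kx))\in\tau$. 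Part (iv) is then immediate: (ii) gives $L\Rightarrow K$, while in the reverse direction the proper preordering $\Ind_{K\uparrow L}\tau$ (proper by Proposition \ref{indmoddef}(ii)) extends to an ordering of $L$ which by (iii) is a $*$-ordering.

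For (v), I will verify the three inclusions $\tr_{L/K}(P_L)\subseteq P_L\cap K$ (using the $\cyc{n}$-invariance of $P_L$ from (i)), $P_K\subseteq P_L\cap K$ (direct from (ii)), and $P_L\cap K\subseteq P_K$ (using (iii) to realize every $*$-ordering $\tau$ on $K$ as $\rho\cap K$ for a suitable $*$-ordering $\rho$ on $L$); the remaining inclusion $P_K\subseteq\tr_{L/K}(P_L)$ is immediate from $P_K\subseteq P_L$ and $\tr_{L/K}(k)=k$ for $k\in K$. For (vi), (v) presents $P_K$ as the restriction $\Res P_L$ of the $G$-invariant preordering $P_L$, so Proposition \ref{prop_prefect_pr_K} will be applied twice: once to yield $\Ind_{K\uparrow L}P_K=QM_L(P_K)$, and once (through $P_L=QM_L(P_L\cap K)$) to give $QM_L(P_K)=P_L$. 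The chain
\begin{equation*}
\gA_+=\bigcap_\tau\Ind_{K\uparrow\gA}\tau=\Ind_{K\uparrow\gA}P_K=\Ind_{L\uparrow\gA}\bigl(\Ind_{K\uparrow L}P_K\bigr)=\Ind_{L\uparrow\gA}P_L
\end{equation*}
will then follow from the commutation of $\Ind$ with intersections combined with induction in stages (Proposition \ref{prop_ind_stage}).

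The hard part will be the key homogeneity identity and the verification $\lambda_k\in\Ind_{K\uparrow L}\tau$ in (iii); once those are in hand, the rest is bookkeeping that combines the Galois-theoretic apparatus of Section \ref{sect_gal} with the perfectness result for restrictions under group actions (Proposition \ref{prop_prefect_pr_K}).
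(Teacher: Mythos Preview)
Your plan is correct and essentially follows the paper's own proof, with one pleasant variation worth noting. For (i) the paper argues directly: it observes that $P_L$ is the preordering generated by the $\lambda_k=e^{*k}e^k$ and then checks by hand that $\sigma(\lambda_k)=\lambda_{k+1}\lambda_1(\lambda_1^{-1})^2\in P_L$. Your route via the homogeneity identity $\gp_{\gA/L}((xe^k)^*(xe^k))=\lambda_k\,\sigma^k(\gp_{\gA/L}(x^*x))$ instead shows that $\sigma$ permutes the set of $*$-orderings on $L$, which is a slightly stronger and more conceptual statement; the $\sigma$-invariance of $P_L$ then drops out immediately. In (vi) you invoke Proposition~\ref{prop_prefect_pr_K} where the paper uses Proposition~\ref{prop_field_impr}, but these are interchangeable here since $P_K=\Res P_L$ and $P_L$ is $G$-invariant. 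The remaining parts (ii)--(v) match the paper's argument almost line for line.
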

\begin{proof}
\noindent(i): Since $\gA$ is a $\cyc{n}$-graded $*$-algebra, an ordering $\rho$ of $L$ contains all $\gp_{\gA/L}(x^*x),\ x\in\gA$, if and only if $e^{*k}e^k\in\rho$ for all $k.$ 
In particular, $P_L$ is the smallest preordering containing all $e^{*k}e^k.$ Hence, it is enough to check that $\sigma(e^{*k}e^k)\in P_L$ for all $k.$

It follows from (\ref{eq_gA_cyclic}) that $(e^{*}e)\sigma(l)={e^{*}le}\ \mbox{for all}\ l\in L.$ Using this with $l=e^{*k}e^k$ we get
$$
\sigma(e^{*k}e^k)=e^{*(k+1)}e^{k+1}(e^*e)^{-1}=e^{*(k+1)}e^{k+1}\cdot e^*e\cdot((e^*e)^{-1})^2\in P_L.
$$

\smallskip
\noindent (ii): Let $\rho$ be a $*$-ordering and $x=\sum_{k=0}^{n-1}e^kl_k\in\gA.$ Since $\tr_{\gA/K}=\tr_{L/K}\circ\gp_{\gA/L}$ we get
\begin{gather}\label{eq_aux3}
\tr_{\gA/K}(x^*x)=\tr_{L/K}\left(\sum_{k=0}^{n-1}e^{*k}e^kl_k^2\right)=\sum_{k,m=0}^{n-1}\sigma^m(l_k^2)\sigma^m(e^{*k}e^k)
\end{gather}
which belongs to $P_L$ by (i). Hence $\tr_{\gA/K}(x^*x)\in P_L\cap K\subseteq\rho\cap K.$

\smallskip
\noindent (iii): Let $\tau$ be a $*$-ordering on $K$. Since $\sum L^2\subseteq\sum\gA^2$ and $\tr_{L/K}(\sum L^2)=\gp_{\gA/K}(\sum L^2)\subseteq\tau,$ $\tau$ is inducible to $L.$ 
By the equivalence of claims (a) and (e) of Proposition \ref{prop_ind_field_ext}, $\Ind_{K\uparrow L}\tau$ is a preordering. 
Further, we have $\tr_{L/K}(l^2e^{*k}e^k)\in\tr_{\gA/K}(\sum\gA^2)\subseteq\tau$ for all $k=1,\dots,n-1$ and $l\in L,$ 
that is $e^{*k}e^k\in\Ind_{K\uparrow L}\tau$ for all $k=1,\dots,n-1,$ which implies the last assertion.

\smallskip
\noindent (iv): Follows from (ii) and (iii).

\smallskip
\noindent (v): $\tr_{L/K}(P_L)=P_L\cap K$ follows from $\sigma$-invariance of $P_L.$ Calculation (\ref{eq_aux3}) implies the inclusion $P_K\subseteq P_L.$ Let $\tau$ be a $*$-ordering on $K.$ Then by (iii) $\Ind_{K\uparrow L}\tau$ contains $P_L.$ On the other hand Proposition \ref{prop_indres}(ii) implies $\tr_{L/K}(\Ind_{K\uparrow L}\tau)\subseteq\tau,$ hence $\tr_{L/K}(P_L)\subseteq\tau.$ That is $\tr_{L/K}(P_L)$ is contained in all $*$-orderings on $K,$ i.e. $\tr_{L/K}(P_L)\subseteq P_K.$

\smallskip
\noindent (vi): First equation follows from (v) and Proposition \ref{prop_field_impr}. The second equation follows then by induction in stages (Proposition \ref{prop_ind_stage}).
\end{proof}

\begin{prop}
$P_L$ and $P_K$ are perfect quadratic modules in $\gA$, that is
$$QM_\cA(P_L)=\Ind_{L\uparrow\gA}(P_L)=\gA_+=\Ind_{K\uparrow\gA}(P_K)=QM_\cA(P_K).$$
Moreover $\gA_+$ is a non-commutative preordering.
\end{prop}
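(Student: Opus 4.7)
My plan is to collapse the chain of equalities by combining Proposition~\ref{prop_star_ord} with Proposition~\ref{prop_prefect_pr_K}, and then verify closure under commuting products for the last assertion.

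The central equalities $\Ind_{L\uparrow\gA}(P_L) = \gA_+ = \Ind_{K\uparrow\gA}(P_K)$ are Proposition~\ref{prop_star_ord}(vi), and part~(v) gives $P_K \subseteq P_L$, whence $QM_\cA(P_K) \subseteq QM_\cA(P_L)$. Since $P_L$ is a $G$-invariant preordering on $L$ by part~(i) and $P_L \cap K = P_K$, the ``moreover'' assertion of Proposition~\ref{prop_prefect_pr_K} yields $P_L = QM_L(P_K)$, whence $QM_\cA(P_L) = QM_\cA(P_K)$ by transitivity of quadratic-module generation.

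For $QM_\cA(P_L) \subseteq \Ind_{L\uparrow\gA}(P_L)$ it suffices to verify $P_L \subseteq \Ind_{L\uparrow\gA}(P_L)$. Taking adjoints in $p\,e^k = e^k\sigma^k(p)$ gives $e^{*k}p = \sigma^k(p)\,e^{*k}$, so for $p \in P_L$ and $z = \sum_k e^k l_k \in \gA$,
$$p_{\gA/L}(z^*pz) \;=\; \sum_{k=0}^{n-1} l_k^2\, \lambda_k\, \sigma^k(p) \;\in\; P_L,$$
since each $\lambda_k = e^{*k}e^k$ lies in $P_L$, the preordering $P_L$ is $\sigma$-invariant, and $L$ is commutative.

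The crux, and the main obstacle, is the reverse inclusion $\Ind_{L\uparrow\gA}(P_L) \subseteq QM_\cA(P_L)$. For $a = \sum_k e^k l_k$ and $x = \sum_m e^m y_m$, the same bookkeeping makes $p_{\gA/L}(x^*ax)$ into the $L$-valued quadratic form in $y_0,\dots,y_{n-1}$ with matrix $M = (\lambda_j\sigma^m(l_{j-m}))$. The hypothesis on $a$ makes $M$ positive in every $*$-ordering of $L$; a matrix-valued Positivstellensatz over the intersection $P_L$, in the spirit of \cite{ps,c,ss2}, should then decompose $M = \sum_i B_i^\top D_i B_i$ with $D_i$ diagonal with entries in $P_L$, which transports back through $\epsilon$ to a representation $a = \sum_i c_i^* p_i c_i$ with $p_i \in P_L$ and $c_i \in \gA$. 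Finally, that $\gA_+$ is a preordering is obtained via $*$-representations: for commuting $a,b \in \gA_+$ and any $*$-ordering $\rho$ of $K$, a GNS representation of a positive hermitian state extending $\rho$ sends $a,b$ to commuting positive hermitian operators, whose product is again positive, so $ab \in \Ind_{K\uparrow\gA}(\rho)$ and hence $ab \in \gA_+$.
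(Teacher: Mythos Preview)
Your plan is essentially correct and follows the same route as the paper: the reduction $QM_\cA(P_L)=QM_\cA(P_K)$ via Proposition~\ref{prop_prefect_pr_K} and the middle equalities via Proposition~\ref{prop_star_ord}(vi) are exactly how the paper proceeds. The step you correctly flag as the crux --- the reverse inclusion $\Ind_{L\uparrow\gA}(P_L)\subseteq QM_\cA(P_L)$ via a matrix Positivstellensatz over $P_L$ --- and the preordering assertion are precisely what the paper delegates to \cite[Propositions~9.10 and~9.9]{ss2}, so your instinct to invoke \cite{ps,c,ss2} there is on target; you should simply cite those results rather than leave the decomposition as ``should then''.
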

\begin{proof}
The first equality follows directly from \cite[Proposition 9.10]{ss2}. By Proposition \ref{prop_prefect_pr_K} we have $\Ind_{K\uparrow L}P_K=QM_L(P_K).$ Hence
$$
QM_\cA(P_K)=QM_\cA(QM_L(P_K))=QM_\cA(\Ind_{K\uparrow L}(P_K))=QM_\cA(P_L).
$$
The fact that $\gA_+$ is a non-commutative preordering is the statement of \cite[Proposition 9.9]{ss2}.
\end{proof}


\section{Weyl algebra}\label{sect_weyl}

In this section we study the $*$-algebra
\begin{gather}\label{eq_aa*_a*a}
\cA=\dC\langle a, a^\ast \mid a a^\ast-a^\ast a=1\rangle,
\end{gather}
which is called the \emph{Weyl algebra.} Using the linear substitutions 
$$X=\dfrac{a+a^\ast}{\sqrt{2}}=q,\ Y=\dfrac{a-a^\ast}{\sqrt{2}}=ip$$
we get another two presentations for $\cA:$
\begin{gather}\label{eq_YX_XY}
\cA=\dC\langle X,Y \mid Y X-X Y=1, \quad X^\ast=X, \quad Y^\ast=-Y\rangle \\
= \dC\langle p,q \mid p q-q p=-i, \quad p^\ast=p, \quad q^\ast=q\rangle
\end{gather}

Denote by $N$ the \emph{number operator}
$$
N=a^*a=\frac 12(X^2-Y^2-1)=\frac 12(p^2+q^2-1).
$$

The standard $*$-representation of $\cA$ used in quantum mechanics is the Schr\"odinger representation $\pi_S$ of (\ref{eq_YX_XY}) 
which acts on the Schwarz space $\cS(\dR)=\cD(\pi_S)$ as follows
$$
(\pi_S(X)\varphi)(t)=t\varphi(t),\ (\pi_S(Y)\varphi)(t)=\frac{d}{dt}\varphi(t),
$$
see \cite[Section VIII.5]{rs} for details. $\pi_S$ is unitarily equivalent to the Fock-Bargmann representation $\pi_F$ 
of (\ref{eq_aa*_a*a}) which is defined as follows. 
The domain $\cD(\pi_F)$ consists of all sequences\footnote{Here, $\dN_0=\{0,1,2,\ldots\}$ is the set of all nonnegative integers.}
$\varphi=(\varphi_n)_{n\in\dN_0}\in\ell^2(\dN_0)$ such that 
$\sum_k k^M|\varphi_k|^2<\infty$ for all $M\in\dN_0;$ $\pi_F$ acts on the orthonormal base $e_n,\ n\in\dN_0$, of $\ell^2(\dN_0)$ as follows
\begin{gather}\label{eq_fock}
\pi_F(a)e_k=\sqrt k\,e_{k-1},\ \pi_F(a^*)e_k=\sqrt{k+1}\,e_{k+1},\ k\in\dN_0,
\end{gather}
where, $e_{-1}=0.$ We denote by $\cA_+$ the quadratic module $QM_\cA(\pi_F);$ the elements of $\cA_+$ are called \emph{positive.} Since the linear set $\cD_0(\pi_F)=\Lin\set{e_k,\ k\in\dN_0}$ is dense in $\cD(\pi_F)$ in the graph topology, we get
\begin{gather}\label{eq_fock_pos}
a\in\cA_+\Longleftrightarrow \langle\pi_F(a)\varphi,\varphi\rangle\geq 0,\ \forall\varphi\in\cD_0(\pi_F).
\end{gather}
\medskip

We will consider the following $\dZ$-grading of $\cA$
\begin{gather}\label{eq_grading_weyl}
\cA=\bigoplus_{k \in \dZ} e_k \cB
\end{gather}
where $\cB=\dC[N]$ and
$$
e_k=\left\{ \begin{array}{cc} a^k, & k \ge 0, \\ (a^\ast)^{-k}, & k <0. \end{array} \right.
$$
Note that for every $k \in \dZ$, $N e_k=e_k (N-k)$, so that $f(N) e_k =e_k f(N-k)$ for every $f(N) \in \cB$ and every $k \in \dZ$. It implies that $(e_k \cB)(e_l \cB) \subseteq e_{k+l}\cB$ and $(e_k \cB)^\ast=e_{-k} \cB$.\medskip

The following fact was first proved in \cite{fs}.
\begin{lemma}\label{lemma_sosweyl1}
$\sum\cA^2\cap\cB$ is the q.m. in $\cB$ generated by
$$
\set{N(N-1) \cdots (N-k)\mid k \in\dN_0}
$$
\end{lemma}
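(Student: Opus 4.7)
My plan is to exploit the $\dZ$-grading (\ref{eq_grading_weyl}) together with the general observation from Section \ref{subsect_grad} that for a group-graded $\ast$-algebra, $\sum\cA^2\cap\cB$ consists of finite sums $\sum_i a_i^\ast a_i$ with each $a_i$ homogeneous. The inclusion ``$\supseteq$'' is then immediate: each generator $N(N-1)\cdots(N-k+1)$ equals $(a^k)^\ast a^k$, so $g(N)^\ast\cdot N(N-1)\cdots(N-k+1)\cdot g(N)=(a^k g(N))^\ast(a^k g(N))$ lies in $\sum\cA^2\cap\cB$ for every $g(N)\in\cB$, and sums of such remain in $\sum\cA^2\cap\cB$.

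For the substantive direction ``$\subseteq$'' it suffices, by the above, to show that $a^\ast a$ lies in the candidate q.m.\
\[
\cQ := QM_\cB\bigl(\{N(N-1)\cdots(N-k+1):k\in\dN\}\bigr)
\]
whenever $a\in\cA$ is homogeneous. Writing $a=e_k f(N)$ with $f(N)\in\cB$, and using that $e_k^\ast e_k\in\cB$ commutes with everything in $\cB$, one obtains
\[
a^\ast a \;=\; \overline{f}(N)\,f(N)\cdot e_k^\ast e_k,
\]
where $\overline{f}(N)f(N)\in\sum\cB^2\subseteq\cQ$. So the task reduces to showing $e_k^\ast e_k\in\cQ$ for every $k\in\dZ$. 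A routine induction using the commutation rule $Ne_k=e_k(N-k)$ (and its polynomial consequence $f(N)e_k = e_k f(N-k)$) together with $aa^\ast = N+1$ and $a^\ast a=N$ yields
\[
e_k^\ast e_k \;=\; \begin{cases} N(N-1)\cdots(N-k+1), & k\ge 0,\\ (N+1)(N+2)\cdots(N+|k|), & k<0.\end{cases}
\]
For $k\ge 0$ this is a generator of $\cQ$ (and equals $1$ when $k=0$), so nothing further is needed there.

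The main obstacle is the case $k<0$, where I must show that the ``rising factorial'' $(N+1)(N+2)\cdots(N+m)$ lies in $\cQ$. I will do this by induction on $m\ge 1$, proving more precisely that it is a nonnegative $\dR$-linear combination of the falling factorials $N^{\underline{j}}:=N(N-1)\cdots(N-j+1)$ (with $N^{\underline{0}}=1$). The base $m=1$ reads $N+1=N^{\underline{1}}+1$. For the inductive step I will use the elementary identity
\[
(N+m)\cdot N^{\underline{j}} \;=\; N^{\underline{j+1}} \;+\; (j+m)\cdot N^{\underline{j}},
\]
obtained from $N=(N-j)+j$ together with $(N-j)\cdot N^{\underline{j}}=N^{\underline{j+1}}$. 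Multiplying the inductive expression for $(N+1)\cdots(N+m-1)$ by $(N+m)$ and applying this identity term-by-term preserves nonnegativity of all coefficients, proving the claim.

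Combining these pieces, every $a^\ast a$ with $a$ homogeneous lies in $\cQ$, whence $\sum\cA^2\cap\cB\subseteq\cQ$; together with the trivial ``$\supseteq$'' this proves the lemma.
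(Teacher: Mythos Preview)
Your proof is correct and follows the same approach as the paper: use the $\dZ$-grading to identify $\sum\cA^2\cap\cB$ with the q.m.\ in $\cB$ generated by $\{e_k^\ast e_k : k\in\dZ\}$, compute $e_k^\ast e_k$ explicitly as falling/rising factorials, and then argue that the $k<0$ terms are redundant. The only minor difference is in this last step: the paper simply asserts that the rising factorials $(N+1)\cdots(N+m)$ lie in $\sum\cB^2+N\sum\cB^2$ (which holds since they are nonnegative on $[0,\infty)$), whereas you supply a self-contained inductive argument expressing them as nonnegative $\dR$-linear combinations of the falling factorials $N^{\underline{j}}$.
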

\begin{proof}
It follows from (\ref{eq_grading_weyl}) that q.m. $\sum \cA^2 \cap \cB$ is the smallest q.m. in $\cB$ which contains
\begin{gather*}
e_k^\ast e_k = \left\{ \begin{array}{cc} N(N-1) \cdots (N-k+1), & k > 0, \\ 1, & k=0, \\
(N+1)(N+2) \cdots (N+(-k)), & k <0. \end{array} \right.
\end{gather*}
It is easily seen that $e_k^\ast e_k,\ k\leq 0$ is contained in $\sum\cB^2+N\sum\cB^2.$ Hence $\sum\cA^2\cap\cB$ is generated by $e_k^\ast e_k,\ k>0.$
\end{proof}

Clearly, 
$$\operatorname{Pos}(\dN_0):=\set{f\in\dC[N]\mid f(k)\geq 0,\ \forall k\in\dN_0}$$
is a q.m. in $\cB=\dC[N]$. It follows from (\ref{eq_fock}) that 
$\pi_F(N)e_k=ke_k,\ k\in\dN_0,$ hence $f(N)\in\cB$ is in $\cA_+$
if and only if $f(k)\geq 0$ for all $k\in\dN_0$. Therefore:

\begin{lemma}\label{lemma_sosweyl2}
$\cA_+\cap\cB=\operatorname{Pos}(\dN_0).$
\end{lemma}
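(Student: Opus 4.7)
The plan is to exploit the fact that $N$ acts diagonally on the orthonormal basis $\{e_k\}_{k \in \dN_0}$ via the Fock--Bargmann representation $\pi_F$, so testing positivity of $f(N)$ reduces to testing nonnegativity of $f$ on the spectrum $\dN_0$.

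First I would recall, as noted in the paragraph preceding the lemma, that $\pi_F(N)e_k=ke_k$ for every $k\in\dN_0$. Consequently, for any polynomial $f\in\dC[N]$ we have $\pi_F(f(N))e_k=f(k)e_k$. In particular $\pi_F(f(N))$ preserves the dense subspace $\cD_0(\pi_F)=\Lin\{e_k\mid k\in\dN_0\}$ and is diagonalized on the basis $\{e_k\}$.

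Next I would apply the characterization (\ref{eq_fock_pos}): $f(N)\in\cA_+$ if and only if $\langle\pi_F(f(N))\varphi,\varphi\rangle\ge 0$ for every $\varphi\in\cD_0(\pi_F)$. Writing an arbitrary $\varphi\in\cD_0(\pi_F)$ as a finite sum $\varphi=\sum_{k} c_k e_k$ with $c_k\in\dC$, orthonormality of $\{e_k\}$ gives
\[
\langle\pi_F(f(N))\varphi,\varphi\rangle=\sum_{k}f(k)\,|c_k|^2.
\]
If $f\in\operatorname{Pos}(\dN_0)$, this sum is manifestly nonnegative, so $f(N)\in\cA_+\cap\cB$. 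Conversely, if $f(N)\in\cA_+$, then choosing $\varphi=e_k$ for each fixed $k\in\dN_0$ yields $f(k)=\langle\pi_F(f(N))e_k,e_k\rangle\ge 0$, so $f\in\operatorname{Pos}(\dN_0)$.

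This pair of implications gives the equality $\cA_+\cap\cB=\operatorname{Pos}(\dN_0)$. There is no real obstacle here; the argument is essentially a bookkeeping consequence of the diagonal action of $N$ together with the density of $\cD_0(\pi_F)$ in $\cD(\pi_F)$ which is already recorded before the lemma. The only point worth a moment's care is that it suffices to test positivity on the single basis vectors $e_k$ (rather than arbitrary $\varphi$), which is immediate since $\pi_F(f(N))$ is diagonal on an orthonormal basis.
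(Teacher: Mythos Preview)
Your proof is correct and follows exactly the same approach as the paper: the paper observes (in the sentence immediately preceding the lemma) that $\pi_F(N)e_k=ke_k$, so $f(N)\in\cA_+$ iff $f(k)\ge 0$ for all $k\in\dN_0$, and your argument simply spells out this diagonalization step in more detail.
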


It follows from either Lemma \ref{lemma_sosweyl1} or Lemma \ref{lemma_sosweyl2} that the q.m. $\operatorname{Pos}(\dN_0)$ is inducible.
Moreover, we have the following:

\begin{prop}\label{prop_fock_qm}
$\cA_+=\Ind \operatorname{Pos}(\dN_0)$
\end{prop}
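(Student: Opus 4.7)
The plan is to prove the two inclusions separately, with the nontrivial direction resting on the cyclicity of the vector $e_0$ under $\pi_F(\cA)$.

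For the inclusion $\cA_+ \subseteq \Ind\operatorname{Pos}(\dN_0)$, I first establish the auxiliary fact that $p(\cA_+) \subseteq \operatorname{Pos}(\dN_0)$. Take $y \in \cA_+$ and decompose it according to the $\dZ$-grading $(\ref{eq_grading_weyl})$ as $y = \sum_l e_l b_l$ with $b_l \in \cB$, so that $p(y)=b_0$. For any fixed $k \in \dN_0$, acting on $e_k$ by $\pi_F(e_l b_l)$ produces a scalar multiple of $e_{k-l}$ (from $(\ref{eq_fock})$ applied $|l|$ times), which is orthogonal to $e_k$ whenever $l \ne 0$. Therefore
$$\langle \pi_F(y) e_k, e_k\rangle = \langle \pi_F(b_0) e_k, e_k\rangle = b_0(k),$$
since $\pi_F(N) e_k = k e_k$. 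As $y \in \cA_+$, the left-hand side is nonnegative, so $b_0(k) \geq 0$ for every $k$, i.e.\ $p(y) \in \operatorname{Pos}(\dN_0)$. Now for $a \in \cA_+$ and any $x \in \cA$, the quadratic module axiom (QM2) gives $x^*ax \in \cA_+$, hence $p(x^*ax) \in \operatorname{Pos}(\dN_0)$; thus $a \in \Ind\operatorname{Pos}(\dN_0)$.

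For the reverse inclusion $\Ind\operatorname{Pos}(\dN_0) \subseteq \cA_+$, suppose $a \in \Ind\operatorname{Pos}(\dN_0)$. By the criterion $(\ref{eq_fock_pos})$, it suffices to show $\langle \pi_F(a)\varphi,\varphi\rangle \geq 0$ for every $\varphi \in \cD_0(\pi_F)$. Writing $\varphi = \sum_{k=0}^M c_k e_k$, the crucial observation is that $e_k = \frac{1}{\sqrt{k!}}\pi_F((a^*)^k)e_0$, so setting
$$x = \sum_{k=0}^M \frac{c_k}{\sqrt{k!}}(a^*)^k \in \cA,$$
we get $\pi_F(x)e_0 = \varphi$. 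Consequently,
$$\langle \pi_F(a)\varphi,\varphi\rangle = \langle \pi_F(x^*ax)e_0, e_0\rangle = p(x^*ax)(0),$$
where the final equality is the same orthogonality computation used above, applied at $k=0$. Since $a \in \Ind\operatorname{Pos}(\dN_0)$, we have $p(x^*ax) \in \operatorname{Pos}(\dN_0)$, so its value at $0$ is nonnegative. This completes the argument.

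The only nonroutine step is recognizing that $e_0$ is cyclic for $\pi_F(\cA)$ with explicit intertwiners given by powers of $a^*$; once this is in hand, both inclusions reduce to the computation that off-diagonal graded components of $\pi_F$ have zero diagonal matrix elements in the basis $\{e_k\}$. I do not expect any genuine obstacle: this is essentially the statement that the Fock representation is compatible with the $\dZ$-grading in a way that makes positivity at the diagonal matrix elements $\langle \pi_F(\cdot)e_k,e_k\rangle$ coincide with positivity of $p(\cdot)$ on $\dN_0$.
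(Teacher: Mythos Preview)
Your proof is correct, and it takes a more elementary route than the paper's. The paper invokes the abstract machinery of induced $\ast$-representations: it cites \cite[Example~1]{ss} for the fact that each one-dimensional character $\rho_k$ of $\cB$ induces (up to unitary equivalence) the Fock--Bargmann representation on $\cD_0(\pi_F)$, then applies Proposition~\ref{prop_ind_rep_mod} to obtain $\cA_+=\Ind(QM_\cB(\rho_k))$ for every $k$, and finally intersects over $k$ to land on $\operatorname{Pos}(\dN_0)$. Your argument bypasses this apparatus entirely: the identity $\langle\pi_F(y)e_k,e_k\rangle=p(y)(k)$ is precisely the concrete content behind the abstract statement, and the cyclicity of $e_0$ (with explicit lift $x=\sum_k\frac{c_k}{\sqrt{k!}}(a^*)^k$) replaces the induced-representation construction. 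Your approach is self-contained and does not rely on the external reference; the paper's approach, on the other hand, illustrates how the general framework of Section~\ref{secindmod} specializes here and makes the connection to Proposition~\ref{prop_ind_rep_mod} explicit. Note also that your reverse inclusion is essentially the content of the paper's Proposition~7.2 (positivity detected by $\varphi_0=\rho_0\circ p$), so you have in effect merged the two propositions into a single direct argument.
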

\begin{proof}
For $k\in\dN_0$ let $\rho_k$ denote the one-dimensional $*$-representation of $\cB$ defined by $\rho_k(N)=k$.
Then $QM_\cB(\rho_k)=\set{f\in\dC[N]\mid f(k)\geq 0}$. 
As shown in \cite[Example 1]{ss}, the $*$-representation 
$\Ind_{\cB\uparrow\cA}\rho_k$ is unitarily equivalent to the Fock-Bargmann representation restricted onto $\cD_0(\pi_F)$. 
Proposition \ref{prop_ind_rep_mod} implies $\cA_+=QM_\cA(\Ind\rho_k)=\Ind (QM_\cB(\rho_k)).$
Using the last equation we get for $x\in\cA_h$
\begin{gather*}
x\in\cA_+\Longleftrightarrow\forall y\in\cA\forall k\in\dN_0\ p(y^*xy)\in QM_\cB(\rho_k)\Longleftrightarrow\\
\Longleftrightarrow \forall y\in\cA\ p(y^*xy)\in\cap_{k\in\dN_0}QM_\cB(\rho_k)\Longleftrightarrow x\in\Ind \operatorname{Pos}(\dN_0),
\end{gather*}
which completes the proof.
\end{proof}

Let $\varphi_0$ be the linear functional on $\cA$ defined by $\varphi_0(a^{*m}a^n)=0$ for $m+n>0$ and $\varphi_0(\Un{})=1.$ The following proposition gives another useful characterization of $\cA_+.$
\begin{prop}
An element $x\in\cA_h$ belongs to $\cA_+$ if and only if $\varphi_0(y^*xy)\geq 0$ for all $y\in\cA.$
\end{prop}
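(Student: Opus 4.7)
The plan is to recognize $\varphi_0$ as the vacuum state of the Fock--Bargmann representation and then exploit cyclicity of the vacuum vector $e_0$ for the action of $\cA$ on $\cD_0(\pi_F)$.

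\textbf{Step 1: Identify $\varphi_0$ with a vector state.} First I would check that
$$\varphi_0(z) = \langle \pi_F(z) e_0, e_0 \rangle \quad \text{for all } z \in \cA.$$
By the normal-ordering form (PBW-type basis) for $\cA$, every $z \in \cA$ is a unique linear combination of monomials $a^{*m} a^n$, so it suffices to check the identity on such monomials. From (\ref{eq_fock}), $\pi_F(a) e_0 = 0$, hence $\pi_F(a^{*m} a^n) e_0 = 0$ whenever $n > 0$. For $n = 0$ and $m > 0$, $\pi_F(a^{*m}) e_0 = \sqrt{m!}\, e_m \perp e_0$. Finally $\langle e_0, e_0 \rangle = 1 = \varphi_0(\Un{})$. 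This matches the defining values of $\varphi_0$.

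\textbf{Step 2: Forward direction.} Suppose $x \in \cA_+$. Then by definition $\langle \pi_F(x) v, v \rangle \geq 0$ for every $v \in \cD(\pi_F)$. For any $y \in \cA$, take $v = \pi_F(y) e_0 \in \cD(\pi_F)$; using Step 1,
$$\varphi_0(y^* x y) = \langle \pi_F(y^* x y) e_0, e_0 \rangle = \langle \pi_F(x) \pi_F(y) e_0, \pi_F(y) e_0\rangle \geq 0.$$

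\textbf{Step 3: Converse direction via cyclicity.} Conversely, assume $\varphi_0(y^* x y) \geq 0$ for all $y \in \cA$. By (\ref{eq_fock_pos}) it is enough to verify $\langle \pi_F(x) \varphi, \varphi \rangle \geq 0$ for every $\varphi \in \cD_0(\pi_F) = \Lin\{e_k : k \in \dN_0\}$. Since $e_k = \frac{1}{\sqrt{k!}} \pi_F(a^{*})^k e_0$, the vector $e_0$ is cyclic for $\pi_F$ on $\cD_0(\pi_F)$: every $\varphi = \sum_{k=0}^n c_k e_k$ can be written as $\varphi = \pi_F(y) e_0$ for $y = \sum_{k=0}^n (c_k/\sqrt{k!}) a^{*k} \in \cA$. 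Hence, reversing the computation of Step 2,
$$\langle \pi_F(x) \varphi, \varphi \rangle = \langle \pi_F(y^* x y) e_0, e_0 \rangle = \varphi_0(y^* x y) \geq 0,$$
so $x \in \cA_+$.

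The only real content is Step 1, which is a direct computation, and the observation in Step 3 that $e_0$ is cyclic for $\pi_F|_{\cD_0(\pi_F)}$; nothing here is delicate, since the domain $\cD_0(\pi_F)$ is precisely $\pi_F(\cA) e_0$ and (\ref{eq_fock_pos}) already reduces positivity of $\pi_F(x)$ to this dense subspace.
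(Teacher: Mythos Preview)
Your proof is correct and takes a more elementary route than the paper's. You identify $\varphi_0$ directly as the vacuum state $\langle \pi_F(\cdot)e_0,e_0\rangle$ of the Fock--Bargmann representation and exploit cyclicity of $e_0$ on $\cD_0(\pi_F)$; in effect you are recognizing $\pi_F|_{\cD_0(\pi_F)}$ as the GNS representation of $\varphi_0$. The paper instead factors $\varphi_0 = \rho_0 \circ p$ through the intermediate subalgebra $\cB = \dC[N]$ (where $\rho_0$ is the character $f(N) \mapsto f(0)$), and then invokes induction in stages (Proposition~\ref{prop_ind_stage}) together with the identity $\cA_+ = \Ind^p QM_\cB(\rho_0)$ obtained in the proof of Proposition~\ref{prop_fock_qm}. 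Your argument is shorter and self-contained; the paper's argument has the virtue of illustrating how the machinery of induced quadratic modules developed earlier in the paper recovers such concrete statements without unpacking the representation.
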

\begin{proof}
Let $p:\cA\to\cB$ denote the canonical conditional expectation defined by the grading (\ref{eq_grading_weyl}). Then 
$\varphi_0=\rho_0\circ p,$ where $\rho_0:\cB\to\dC$ is the character on $\cB$ defined by $\rho_0(f(N))=f(0).$ Obviously we have $\Ind_{\dC\uparrow\cB}^{\rho_0}\dR_+=\cN_0.$ Using induction in stages (Proposition \ref{prop_ind_stage}) and Proposition \ref{prop_fock_qm} we get
\begin{gather*}
\set{x\in\cA_h\mid \forall y\in\cA\ \varphi_0(y^*xy)\geq 0}=\Ind^{\varphi_0}\dR_+=\\
=\Ind^p(\Ind^{\rho_0}\dR_+)=\Ind^p\cN_0=\cA_+.
\end{gather*}
\end{proof}

\noindent The following Proposition was stated in \cite{wor}, see also \cite{fs}.
\begin{prop}\label{weylsos}
$(N-1)(N-2)\in(\cA_+\cap\cB)\setminus(\sum\cA^2\cap\cB).$
\end{prop}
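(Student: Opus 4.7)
The plan is to verify the two memberships separately. First, $(N-1)(N-2) \in \cA_+ \cap \cB$ is immediate from Lemma \ref{lemma_sosweyl2}: for every $k \in \dN_0$ one has $(k-1)(k-2) \ge 0$, so $(N-1)(N-2) \in \operatorname{Pos}(\dN_0) = \cA_+ \cap \cB$.

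The real content is the non-membership $(N-1)(N-2) \notin \sum \cA^2 \cap \cB$, which I would argue by contradiction using Lemma \ref{lemma_sosweyl1}: assume a representation
\[
(N-1)(N-2) = \sum_{j \ge 0} \sigma_j(N)\, N(N-1)\cdots(N-j+1)
\]
with $\sigma_j \in \sum \cB^2$. Each nonzero $\sigma_j$ is a complex polynomial in $N$ that is real and nonnegative on $\dR$, hence of even degree with strictly positive leading coefficient. Since $N_j := N(N-1)\cdots(N-j+1)$ is monic of degree $j$, every nonzero term $\sigma_j N_j$ contributes a \emph{positive} leading coefficient at degree $\deg \sigma_j + j$. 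Therefore leading terms of equal top degree coming from different indices $j$ can only add, never cancel, and matching the degree of the left-hand side forces $\deg \sigma_j + j \le 2$ for every $j$ with $\sigma_j \ne 0$. Combined with the even-degree constraint, this yields $\sigma_j = 0$ for $j \ge 3$, nonnegative constants $\sigma_1 = c_1$ and $\sigma_2 = c_2$, and $\deg \sigma_0 \le 2$.

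A short coefficient-matching closes the argument. Writing $\sigma_0 = \alpha N^2 + \beta N + \gamma$ and equating with $N^2 - 3N + 2 - c_1 N - c_2 N(N-1)$ gives $\alpha = 1 - c_2$, $\beta = c_2 - c_1 - 3$, $\gamma = 2$. Nonnegativity of $\sigma_0$ on $\dR$ requires $\alpha \ge 0$ (so $c_2 \le 1$) and, if $\alpha > 0$, the discriminant condition $\beta^2 \le 4\alpha\gamma = 8(1-c_2)$. Since $c_1, c_2 \ge 0$, one has $|\beta| = 3 + c_1 - c_2 \ge 3 - c_2 > 0$, so the condition entails $(3-c_2)^2 \le 8(1-c_2)$, i.e. $(c_2+1)^2 \le 0$ — impossible. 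The edge case $c_2 = 1$ leaves $\sigma_0 = -(c_1+2) N + 2$, a nonzero linear polynomial that is not nonnegative on $\dR$. Either way we reach a contradiction.

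I expect the degree/leading-coefficient reduction to be the main point requiring care: the argument hinges on the observation that nonnegative polynomials on $\dR$ always have positive leading coefficients, which prevents any cancellation at the top degree of $\sum_j \sigma_j N_j$. Once the decomposition is cut down to a six-parameter family, the contradiction is a one-line quadratic discriminant computation.
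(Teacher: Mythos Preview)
Your proof is correct and follows exactly the approach the paper intends: its proof reads in full ``Apply Lemmas \ref{lemma_sosweyl1} and \ref{lemma_sosweyl2},'' and you have carefully supplied the degree/leading-coefficient reduction and the discriminant contradiction that those lemmas leave to the reader. The only cosmetic point is that the edge case $c_2=1$ can be folded into the even-degree constraint on $\sigma_0$ (a nonzero linear $\sigma_0$ is already excluded), but your explicit treatment is perfectly fine.
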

\begin{proof}
Apply  Lemmas \ref{lemma_sosweyl1} and \ref{lemma_sosweyl2}.
\end{proof}

It follows from Proposition \ref{weylsos} that $QM_\cA(\cA_+ \cap \cB)$ is strictly larger than 
$\sum\cA^2.$ In view of this fact, we consider the following problem:

\section{Does the Weyl algebra satisfy (Q3)?}\label{secweylexam}

Our aim now is to show that the answer to this question is negative, i.e. $\cN':=QM_\cA(\cA_+\cap \cB)=QM_\cA(\operatorname{Pos}(\dN_0))$ 
is a proper subset of $\cA_+=\Ind \operatorname{Pos}(\dN_0).$ In other words, $\operatorname{Pos}(\dN_0)$ is not perfect.
Motivated by the commutative Motzkin polynomial, we consider the elements
\[
L_K:=Y^2 X^2 Y^2+(-Y)(X^4-K X^2)Y.
\]
for $K \in \RR$. 
Our aim is to prove the following.

\begin{prop}
$L_5+1.4$ belongs to $\cA_+\setminus \cN'$.
\end{prop}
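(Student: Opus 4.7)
The plan has two parts: showing $L_5 + 1.4 \in \cA_+$, and showing $L_5 + 1.4 \notin \cN'$. The second part is the main obstacle.

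\textbf{Part 1 (positivity).} By Proposition \ref{prop_fock_qm} it suffices to verify $\pi_F(L_5+1.4) \ge 0$. Use the identity
\[
L_5 = (XY^2)^*(XY^2) + (XY)^*(X^2 - 5)(XY),
\]
so that $L_5 + 1.4$ is a sum of squares plus the indefinite piece $(XY)^*(X^2-5)(XY) + 1.4$. In the Fock basis $\{e_n\}_{n\in\dN_0}$, the operator $\pi_F(L_5+1.4)$ is a Hermitian band matrix with nonzero entries only on the diagonals $0, \pm 2, \pm 4, \pm 6$ (even parity in the grading). A direct calculation using the formulas for $\pi_F(X^2), \pi_F(Y^2), \pi_F(XY)$ yields the diagonal
\[
\langle \pi_F(L_5+1.4)\,e_n, e_n\rangle \;=\; n^3 - n^2 + 3.5\,n + 0.4 \;>\; 0 \quad \text{for every } n \in \dN_0.
\]
The off-diagonal entries on the $\pm 2, \pm 4, \pm 6$ bands are of polynomial order at most $n^{5/2}$. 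Operator positivity then follows from a Gershgorin-type estimate (for large $n$ the $n^3$ diagonal dominates all relevant off-diagonal sums) combined with an explicit positive-definiteness check on a small finite-rank initial block.

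\textbf{Part 2 (non-membership).} Assume for contradiction $L_5 + 1.4 = \sum_i c_i^* b_i c_i$ with $b_i \in \operatorname{Pos}(\dN_0)$. Using the $\dZ$-grading $\cA = \bigoplus_m e_m \cB$, expand $c_i = \sum_k e_k f_{ik}(N)$, $f_{ik} \in \cB$, and use $b(N)\,e_l = e_l\,b(N-l)$ and $e_k^*\,e_l = \phi_{k,l}(N)\,e_{l-k}$ to rewrite each $c_i^* b_i c_i$ as a sum of $\cA_m$-graded pieces
\[
c_i^* b_i c_i \;=\; \sum_{k,l} f_{ik}^*(N)\,\phi_{k,l}(N)\,b_i(N-k)\,f_{il}(N+l-k)\,e_{l-k}.
\]
Matching the $\cA_m$-components for $m = 0, \pm 2, \pm 4, \pm 6$ against those of $L_5 + 1.4$ gives a system of polynomial identities in $\dC[N]$. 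The intended contradiction is a ``Motzkin-on-quantized-circles'' obstruction: passing to the classical phase-space picture ($Y = iP$ with $x, P$ real), the element $L_5 + 1.4$ has classical symbol
\[
x^2 P^2(x^2 + P^2 - 5) + 1.4,
\]
which takes values as low as $\approx -3.23$ on the annulus $x^2 + P^2 \approx 10/3$. The classical symbol of $c^* b c$ factors as $|\sigma(c)|^2 \cdot \sigma(b)$, where $\sigma(b)$ is automatically nonnegative on the quantized circles $x^2 + P^2 = 2k+1,\ k \in \dN_0$, but can change sign \emph{between} those circles. The obstruction is that the negativity of $L_5 + 1.4$'s classical symbol is attained precisely in the annular regions between consecutive quantized circles, where $\sigma(b)$ of any generator can be chosen to have matching sign only at the cost of creating incompatible constraints on the $f_{ik}$ via the matching equations for the other $\cA_m$-components.

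\textbf{Main obstacle.} Making the last heuristic rigorous is the hardest step. The naive top-degree symbol argument fails: $\sigma_6(L_5+1.4) = x^2 y^4 - x^4 y^2$ becomes $x^2 P^2 (x^2 + P^2) \ge 0$ at $y = iP$, offering no leading-order contradiction. The obstruction must come either from a careful Moyal/sub-leading symbol expansion that keeps track of the ``quantization-gap'' behavior of $\operatorname{Pos}(\dN_0)$-symbols between integer circles, or from explicitly constructing a (necessarily non-$*$-representation) positive linear functional on $\cA$ that evaluates non-negatively on every $c^* b c$ with $b \in \operatorname{Pos}(\dN_0)$ but evaluates negatively on $L_5 + 1.4$ --- a functional one expects to build from moments concentrated on a non-integer circle $x^2 + P^2 = \tau$ with $\tau \in (0.7, 5)$.
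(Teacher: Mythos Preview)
Your Part~1 is plausible in outline but not actually established: you assert a diagonal formula and invoke a Gershgorin-plus-finite-block argument without carrying out either computation. The paper avoids this entirely by producing an explicit sum-of-squares certificate for $(1+X^2)(L_5+1.4)(1+X^2)$ (two concrete positive semi-definite matrices $A_1,A_2$), which settles positivity immediately since $1+X^2$ is invertible in the Schr\"odinger picture. If you pursue your route, you would need to write down the band matrix and actually verify the finite block and the asymptotic domination; as stated it is only a promise.

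Part~2 has a genuine gap, which you yourself flag. Your symbol heuristic does not close, and neither of your two suggested remedies (a sub-leading Moyal analysis, or a separating functional built from a non-integer circle) is carried out. The paper's argument is of the second flavour, but implemented in a completely finite-dimensional way that bypasses all phase-space analysis. The missing ideas are:
\begin{itemize}
\item \emph{Making the problem finite-dimensional.} Using two graded-lexicographic monomial orderings in the $p,q$ presentation (one with $p\prec q$, one with $q\prec p$), the paper shows that in any decomposition $L_5+\lambda=\sum g_{ij}^* f_i g_{ij}$ with $f_i\in\operatorname{Pos}(\dN_0)$, the leading terms cannot cancel (the $f_i$ have positive leading coefficients), so $v_t(f_i)+2v_t(g_{ij})\le v_t(L_5)$ for $t=1,2$. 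This forces $\deg_N f_i\le 2$, and pins down exactly which monomials can occur in the $g_{ij}$. After a further reduction one is left with $L_5+\lambda=v^* C v$ for a single positive semi-definite $6\times 6$ Gram matrix $C$ and a fixed vector $v$ of monomials.
\item \emph{An explicit dual certificate.} Comparing coefficients gives linear equations in the entries of $C$. The paper then exhibits an explicit positive semi-definite matrix $A$ (found as a suitable linear combination of coefficient equations) such that $\lambda-\operatorname{tr}\!\big(\tfrac12(A^T+\bar A)\,C\big)=\tfrac32$. Since both $\tfrac12(A^T+\bar A)$ and $C$ are positive semi-definite, the trace is $\ge 0$, whence $\lambda\ge \tfrac32>1.4$.
\end{itemize}
This is precisely the ``separating functional'' you were reaching for, but realised concretely through the Gram parametrisation rather than through a phase-space measure. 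Your circle-of-radius-$\tau$ idea will not work as stated, because such a functional need not be nonnegative on all of $\cN'$ (the generators $b\in\operatorname{Pos}(\dN_0)$ are only constrained on integer circles, so an evaluation between them gives no control); the Gram/SDP duality is what makes the functional provably $\cN'$-nonnegative.
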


To prove that $L_5+1.4$ belongs to $\cA_+$ it suffices in view of Proposition \ref{prop_fock_qm} to show
that $L_5+1.4$ is positive semi-definite in the Fock-Bargmann representation. Since the element $1+X^2$ is
invertible in the Schr\"odinger representation, it suffices to observe that
\[
(1+X^2)(L_5+1.4)(1+X^2)=(v_1^\ast)^T A_1 v_1+(v_2^\ast)^T A_2 v_2
\]
where
\[
A_1=
\left[
\begin{array}{cc}
 \frac{253}{100} & \frac{121}{100} \\[2mm]
 \frac{121}{100} & \frac{29}{50}
\end{array}
\right]
\ge 0
,\quad
v_1=
\left[
\begin{array}{c}
X^3 Y \\[2mm] X^2 Y^2
\end{array}
\right]
\]
and
\[
A_2=
\left[
\begin{array}{ccccc}
 \frac{251}{25} & \frac{1491}{100} & \frac{911}{50} & \frac{27}{10} & \frac{1537}{500} \\[2mm]
 \frac{1491}{100} & \frac{4657}{200} & \frac{1357}{50} & \frac{3711}{1000} & \frac{951}{200} \\[2mm]
 \frac{911}{50} & \frac{1357}{50} & \frac{1681}{50} & \frac{26}{5} & \frac{549}{100} \\[2mm]
 \frac{27}{10} & \frac{3711}{1000} & \frac{26}{5} & 1 & \frac{71}{100}\\[2mm]
 \frac{1537}{500} & \frac{951}{200} & \frac{549}{100} & \frac{71}{100} & 1
\end{array}
\right]
\ge 0,
\quad
v_2=
\left[
\begin{array}{c}
X \\[2mm] X^3 \\[2mm] X^2 Y \\[2mm] X Y^2 \\[2mm] X^4 Y + X^3 Y^2
\end{array}
\right].
\]

We will divide the proof that $L_5+1.4 \not\in \cN'$ into several steps. The first step is:

\subsection*{Step 1:}
If $L_5+\lambda \in \cN'$ for some $\lambda \in \dR$ then
\[
L_5+\lambda=(u^\ast)^T B u + \alpha N^2+\beta N+\gamma
\]
where
\[
u=\left[ \begin{array}{cccccccc} 1 & X & Y &  X^2 & X Y & Y^2 & X^2 Y & X Y^2 \end{array} \right]^T,
\]
$B$ is a positive semi-definite hermitian $8 \times 8$ matrix
and $\alpha,\beta,\gamma \in \RR$ are such that
either $\alpha>0$ or $\alpha=\beta=\gamma=0$.

\begin{proof}
This is a story of two monomial orderings in the $pq$ representation:
\[
1 \prec_1 p \prec_1 q \prec_1 p^2 \prec_1 pq \prec_1 q^2 \prec_1 p^3 \prec_1 p^2 q \prec_1 p q^2 \prec_1 q^3 \prec_1 \ldots
\]
and
\[
1 \prec_2 q \prec_2 p \prec_2 q^2 \prec_2 pq \prec_2 p^2 \prec_2 q^3 \prec_2 p q^2 \prec_2 p^2 q \prec_2 p^3 \prec_2 \ldots
\]
Let $\lc_1$ and $v_1$ (resp. $\lc_2$ and $v_2$) be the leading coefficient and the leading multidegree w.r.t.
$\prec_1$ (resp. $\prec_2$). For example, $v_1(N)=(0,2)$, $v_2(N)=(2,0)$ and $\lc_1(N)=\lc_2(N)=\frac12$.
Similarly, $v_1(L_5)= (2,4)$, $v_2(L_5)=(4,2)$ and $\lc_1(L_5)=\lc_2(L_5)=1$.

\begin{comment}
The basic properties of
$\lc_i$ and $v_i$, $i=1,2$, are
(1) $\lc_i(f g)=\lc_i(f) \lc_i(g)$ and $v_i(f g)= v_i(f)+v_i(g)$  for all $f,g \in \cA$.
(2) $\lc_i(f^\ast)=\overline{\lc_i(f)}$ and $v_i(f^\ast)=v_i(f)$ for all $f \in \cA$.
(3) For every $f,g \in \cA$ such that $v_i(f)>v_i(g)$ we have that $\lc_i(f+g)=\lc(f)$
and $v_i(f+g)=v_i(f)$.
(4) For every $f,g \in \cA$ such that $v_i(f)=v_i(g)$ and $\lc_i(f) \ne -\lc_i(g)$ we have that
$\lc_i(f+g)=\lc_i(f)+\lc_i(g)$ and $v_i(f+g)=v_i(f)$.
It follows that for every $f \in \A$ with $\lc_i(f)>0$, $i=1,2$, and every nonzero $g \in \A$
we also have $\lc(g^\ast f g)=\lc_i(f) \vert \lc_i(g) \vert^2>0$ for $i=1,2$.
\end{comment}

If $L_5+\lambda$ belongs to $\cN'$, then there exist nonzero elements $f_1,\ldots,f_k \in \operatorname{Pos}(\dN_0)$
and nonzero elements $g_{ij} \in \cA$ such that $L_5+\lambda=\sum_{i,j} g_{ij}^\ast f_i g_{ij}$.
Note that $\lc_1(f_i)=\lc_2(f_i)>0$ for all $i=1,\ldots,k.$ 
It follows that
$\lc_t(g_{ij}^\ast f_i g_{ij})=\lc_t(f_i) \vert \lc_i(g_{ij}) \vert^2>0$ for $t=1,2$ and all $i,j$.
Therefore, $v_t(\sum_{i,j} g_{ij}^\ast f_i g_{ij})=\max_{i,j} v_t(g_{ij}^\ast f_i g_{ij})$
for $t=1,2$ and all $i,j$. In particular, $v_t(f_i)+2 v_t(g_{ij}) \le v_t(L_5)$.
Now, we have to consider several cases:
\begin{itemize}
\item If $\deg_N f_i=0$, then $v_1(f)=v_2(f)=(0,0)$. If follows that $v_1(g_{ij}) \le (1,2)$ and $v_2(g_{ij}) \le (2,1)$.
Therefore $g_{ij}$ is in the span of $\{1,p,q,p^2,pq,q^2,pq^2,p^2q\}$.
\item If $\deg_N f_i=1$, then $v_1(f_i)=(0,2)$ and $v_2(f_i)=(2,0)$. If follows that $v_1(g_{ij}) \le (1,1)$ and
$v_2(g_{ij}) \le (1,1)$. Therefore $g_{ij}$ is in the span of $\{1,p,q,pq\}$.
\item If $\deg_N f_i=2$, then $v_1(f_i)=(0,4)$ and $v_2(f_i)=(4,0)$. If follows that $v_1(g_{ij}) \le (1,0)$ and
$v_2(g_{ij}) \le (0,1)$. Therefore $g_{ij}$ is in the span of $\{1\}$.
\item If $\deg_N f_i \ge 3$, then we have no $g_{ij}$, so this case is not possible.
\end{itemize}
Note that every $f_i \in \operatorname{Pos}(\dN_0)$ with $\deg_N f_i=1$ is of the form
$f_i=\sigma_i+\tau_i N$ where $\tau_i>0$ and $\sigma_i \ge 0$. Finally combining the cases
$\deg_N f_i=0$ and $\deg_N f_i=1$ we get the term $(u^\ast)^T B u $ and the case
$\deg_N f_i=2$ gives the term $\alpha N^2+\beta N+\gamma$ with $\alpha>0$.
\end{proof}

\subsection*{Step 2:}
If $L_5+\lambda \in \cN'$ for some $\lambda \in \RR$ then
\[
L_5+\lambda=(v^\ast)^T C v
\]
where
\[
v=\left[ \begin{array}{cccccc} 1 & X & Y & X Y & X^2 Y & X Y^2 \end{array} \right]^T,
\]
and $C$ is a positive semi-definite hermitian $6 \times 6$ matrix.

\begin{proof}
By Step 1, we can write $L_5+\lambda=(u^\ast)^T B u+ \alpha N^2+\beta N+\gamma$ where
$u=\left[ \begin{array}{cccccccc} 1 & X & Y &  X^2 & X Y & Y^2 & X^2 Y & X Y^2 \end{array} \right]^T$
and either $\alpha>0$ or $\alpha=\beta=\gamma=0$. Expanding both sides and comparing coefficients,
we get several equations. The most interesting are $b_{44}+\frac{\alpha}{4}=0$ and $b_{66}+\frac{\alpha}{4}=0$.
Since $B$ is positive semi-definite and $\alpha \ge 0$, it follows that $\alpha=b_{44}=b_{66}=0$.
Therefore the $4$th and the $6$th row and column of $B$ are zero and by assumption, also $\beta=\gamma=0$.
\end{proof}

\subsection*{Step 3:}
If $L_5+\lambda \in \cN'$ for some $\lambda \in \dR$ then $\lambda \ge \frac32$.

\begin{proof}
Expanding the right hand side of the equation from Step 2 and comparing the coefficients, we get 16 equations.
Let us write down six of them:

\begin{equation}
\tag{$1$}
\lambda-c_{11}+c_{32}+c_{41}-2 c_{62}=0,
\end{equation}
\begin{equation}
\tag{$Y^2$}
c_{33}+c_{36}-2c_{63}-2 c_{66}=-2,
\end{equation}
\begin{equation}
\tag{$XY$}
-c_{14}-c_{23}+c_{32}+2c_{35}+c_{41}+2c_{44}+2c_{53}-4c_{62}-6c_{65}=-10,
\end{equation}
\begin{equation}
\tag{$X^2$}
3c_{52}-c_{22}=0,
\end{equation}
\begin{equation}
\tag{$X^2 Y^4$}
c_{66}=1,
\end{equation}
\begin{equation}
\tag{$X^4 Y^2$}
c_{55}=1.
\end{equation}
Multiplying the equations by $1,-\frac32,-\frac{3}{8},\frac{1}{6},-\frac{33}{8}$ and $-\frac{9}{8}$ respectively
and adding them, we get
\[
\lambda-\Delta=\frac32,
\]
where
$
\Delta=c_{11}-\frac{3}{8} c_{14}+\frac{1}{6} c_{22}-\frac{3}{8} c_{23}-\frac{5}{8} c_{32}+\frac{3}{2} c_{33}
+\frac{3}{4} c_{35}+\frac{3}{2} c_{36}-\frac{5}{8} c_{41}+\frac{3}{4} c_{44}-\frac{1}{2} c_{52}+\frac{3}{4} c_{53}
+\frac{9}{8} c_{55}+\frac{1}{2} c_{62}-3 c_{63}-\frac{9}{4} c_{65}+\frac{9}{8} c_{66}.
$
We can write
\[
\Delta=\tr(A C^T),
\]
where
\[
A=
\left[
\begin{array}{cccccc}
 1 & 0 & 0 & -\frac{3}{8} & 0 & 0 \\[2mm]
 0 & \frac{1}{6} & -\frac{3}{8} & 0 & 0 & 0 \\[2mm]
 0 & -\frac{5}{8} & \frac{3}{2} & 0 & \frac{3}{4} & \frac{3}{2} \\[2mm]
 -\frac{5}{8} & 0 & 0 & \frac{3}{4} & 0 & 0 \\[2mm]
 0 & -\frac{1}{2} & \frac{3}{4} & 0 & \frac{9}{8} & 0 \\[2mm]
 0 & \frac{1}{2} & -3 & 0 & -\frac{9}{4} & \frac{9}{8}
\end{array}
\right]
\]
The matrix $A$ is not hermitian but since $\Delta \in \RR$ and $C$ is hermitian,
we have by the basic properties of trace that
\[
\Delta=\tr(\frac12(A^T+\bar{A})C).
\]
To finish the proof, note that $\frac12(A^T+\bar{A})$ and $C$ are both hermitian and positive semi-definite,
hence $\Delta \ge 0$.
\end{proof}

\hide{
The identity
\[
L_K+\frac{1}{8} (K-3) (K-1)=h^\ast h+o
\]
where
\[
h =X^2 Y+X Y^2+\frac{(K-3) (K-1) X}{4 (K-5)}-\frac{(K-9) (K-3) Y}{4 (K-5)}
\]
and
\[
o=-\frac{(K-9) (K-7) (K-3) (K-1) N}{8 (K-5)^2}.
\]
shows that for $K=1,3,5,7$, $L_K+\frac{1}{8} (K-3) (K-1) \in \sum \cA^2$.
Our numerical experiments also show that in these four cases the element
$L_K+\frac{1}{8} (K-3) (K-1)-\eps \cdot 1$
is not in $\ind(N_0)$ for any $\eps>0$. It is tempting to ask what happens if $K=5$.
}

We have the following identity
\[
L_5+\frac32=h_1^\ast h_1+h_2^\ast h_2,
\]
where
\[
h_1=\frac12 (3X+Y)=\frac{1}{\sqrt{2}}(2 a+a^\ast)
\]
and
\[
h_2=\frac12(3 X + Y + 2 X^2 Y + 2 X Y^2)=\frac{1}{\sqrt{2}}(a^3 - (a^\ast)^2 a).
\]

\section{Appendix : Rigged bimodules}

In this section we briefly discuss a generalization of conditional expectations and of the induction of $*$-representations and quadratic modules.

Let $\bR$ be a real closed field, $\bC=\bR[i]$ and let $\cA$ and $\cB$ be unital $*$-algebras over $\bK$ where $\bK=\bR$ or $\bC.$
Let $\gX$ be a $\bK$-vector space equipped with a structure of a left-$\cA$-right-$\cB$-bimodule, such that $\Un{\cA}x=x\Un{\cB}=x,\ \forall x\in\gX.$ Suppose there exists a map $\langle{\cdot}, {\cdot}\rangle_\cB : \gX \times \gX \to \cB$ which satisfies the following conditions for $x,y\in \gX$, $b \in \cB$, and $a \in \cA$:
\begin{itemize}
\item[(RM1)]\quad $\langle x,y\rangle_{\cB}^*=\langle y,x \rangle_{\cB}$,
\item[(RM2)]\quad $\langle xb,y\rangle_{\cB}=\langle x,y\rangle_{\cB}b$,
\item[(RM3)]\quad $\langle ax, x \rangle_{\cB} = \langle x, a^\ast x \rangle_{\cB}$,
\end{itemize}
then $\gX$ is called a \textit{right $\cB$-rigged left $\cA$-module}. Note, that (RM1) and (RM2) imply that
$\langle x,yb\rangle_{\cB}=b^*\langle x,y\rangle_{\cB},\ \mbox{for}\ x,y\in\gX,\ b\in\cB.$

For a q.m. $\cM\subseteq\cA$, define
\begin{gather*}
\Res^\gX\cM:=\set{\sum_i\langle a_i x_i,x_i\rangle_\cB,\ a_i\in\cM,\ x_i\in\gX}.
\end{gather*}
For a q.m. $\cN\subseteq\cB$ define
\begin{gather*}
\Ind^\gX\cN:=\{a\in\cA_h\ \mid\ \langle ax, x \rangle_{\cB}\in\cN\ \mbox{for all}\ x\in\gX\}.
\end{gather*}

\noindent The set $\Res\cM=\Res^\gX\cM$ is called \textit{restriction of $\cM$ onto $\cB$ via $\gX$}. The set $\Ind\cN=\Ind^\gX\cN$ is called \textit{induction of $\cN$ from $\cB$ to $\cA$ via $\gX.$} The proof of the following proposition goes by direct computations.

\begin{prop}
Let $\cN\subseteq\cB,\ \cM\subseteq\cA$ be quadratic modules. Then $\Res\cM$ is a q.m. if and only if
\begin{gather}\label{eq_restrcrig}
\Un{\cB}=\sum_{i=1}^m\langle x_i,x_i \rangle_{\cB},\ \mbox{for some}\ x_i\in\gX.
\end{gather}

\noindent The set $\Ind\cN$ is a q.m. if and only if
\begin{gather}\label{eq_indcbrig}
\set{\langle x,x\rangle_\cB,\ x\in\gX}\subseteq\cN.
\end{gather}

\noindent Moreover, if (\ref{eq_indcbrig}) is satisfied, then $\Ind\cN$ is proper if and only if
\begin{gather*}
\set{\langle x,x\rangle_\cB,\ x\in\gX}\not\subseteq\supp\cN=\cN\cap(-\cN).
\end{gather*}
\end{prop}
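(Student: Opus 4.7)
The plan is to verify the three equivalences by direct checking of the quadratic module axioms (QM1)--(QM3), exactly as the authors suggest. In each case closure of $\Res\cM$ (resp.\ $\Ind\cN$) under addition, positive scaling and the appropriate conjugation will be automatic from (RM1)--(RM3), so the real content reduces to the identity axiom (QM3).

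For $\Res\cM$ I would first observe that $\Res\cM\subseteq\cB_h$: for $a=a^\ast\in\cM$, axiom (RM3) gives $\langle ax,x\rangle_\cB^\ast=\langle x,ax\rangle_\cB=\langle ax,x\rangle_\cB$. Additivity and scaling are inherited from $\cM$, and (QM2) follows from the computation
\[
b^\ast\langle ax,x\rangle_\cB b = \langle a(xb),xb\rangle_\cB,
\]
which uses (RM2), its consequence $\langle x,yb\rangle_\cB=b^\ast\langle x,y\rangle_\cB$ (a formal consequence of (RM1) and (RM2)), and the bimodule identity $(ax)b=a(xb)$. It then remains to identify when $\Un{\cB}\in\Res\cM$. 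The ``if'' direction is immediate upon taking $a_i=\Un{\cA}\in\cM$. The ``only if'' direction is more subtle: starting from a representation $\Un{\cB}=\sum_i\langle a_i x_i,x_i\rangle_\cB$ with $a_i\in\cM$, one has to rewrite it in the form $\sum_j\langle y_j,y_j\rangle_\cB$; the natural attempt is to use that $\Un{\cA}\in\cM$ together with (QM2) forces $\sum\cA^2\subseteq\cM$, and then apply the polarized identity $\langle c^\ast c\cdot x,x\rangle_\cB=\langle cx,cx\rangle_\cB$ to the sum-of-hermitian-squares part. This is the one step where I expect to have to think hardest.

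For $\Ind\cN$ the verification is the mirror image. Additivity and scaling come from $\cN$ being a q.m. For (QM2), the off-diagonal form $\langle ay,z\rangle_\cB=\langle y,a^\ast z\rangle_\cB$ of (RM3), applied with $y=z=cx$, yields $\langle (c^\ast ac)x,x\rangle_\cB=\langle a(cx),cx\rangle_\cB\in\cN$ for any $a\in\Ind\cN$ and $c\in\cA$. The identity axiom then reads $\langle x,x\rangle_\cB=\langle\Un{\cA}x,x\rangle_\cB\in\cN$ for every $x\in\gX$, which is exactly (\ref{eq_indcbrig}). For the properness clause, assuming (\ref{eq_indcbrig}), $\Ind\cN$ is non-proper iff $-\Un{\cA}\in\Ind\cN$ iff $-\langle x,x\rangle_\cB\in\cN$ for every $x\in\gX$, iff $\langle x,x\rangle_\cB\in\cN\cap(-\cN)=\supp\cN$ for every $x$; negating yields the stated equivalence.

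The main obstacle is the derivation of the off-diagonal identity $\langle ay,z\rangle_\cB=\langle y,a^\ast z\rangle_\cB$ from axiom (RM3), which as stated only supplies the diagonal case $z=y$. In the complex setting this follows from the standard real-plus-imaginary polarization, using additivity and the sesquilinearity of $\langle\cdot,\cdot\rangle_\cB$ that is implicit in (RM1) and (RM2). In the real setting the polarization argument is weaker and one must additionally invoke (RM1) to close the gap; this, along with the ``only if'' direction for $\Res\cM$ mentioned above, is where I expect the proof to require the most care.
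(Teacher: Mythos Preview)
Your approach matches the paper's: the authors say only that the proof ``goes by direct computations,'' and your plan of verifying (QM1)--(QM3) directly is precisely that. The treatment of $\Ind\cN$ and of the properness clause is correct as sketched.

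There is, however, a genuine gap in the ``only if'' direction of the $\Res\cM$ claim, and it is not merely a matter of working harder. Your plan is to start from $\Un{\cB}=\sum_i\langle a_ix_i,x_i\rangle_\cB$ with $a_i\in\cM$ and pass to a representation with the $a_i$ replaced by sums of hermitian squares; but nothing forces an element of $\cM$ to lie in $\sum\cA^2$, so this rewriting is unavailable in general. In fact the implication can fail outright: take $\cA=\cB=\dR$ with trivial involution, $\gX=\dR$ with $\langle x,y\rangle_\cB:=-xy$ (axioms (RM1)--(RM3) are immediate), and $\cM=\cA_h=\dR$. Then $\Res\cM=\dR$ is a quadratic module containing $\Un{\cB}=1$, yet $1$ cannot equal $\sum_i\langle x_i,x_i\rangle_\cB=-\sum_i x_i^2$. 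Your argument \emph{does} go through when $\cM=\sum\cA^2$, since then each $a_i=\sum_k c_{ik}^\ast c_{ik}$ and the polarized form of (RM3) gives $\langle a_ix_i,x_i\rangle_\cB=\sum_k\langle c_{ik}x_i,c_{ik}x_i\rangle_\cB$. So the equivalence is presumably intended either uniformly over all $\cM$ (equivalently, for $\cM=\sum\cA^2$) or under a tacit positivity hypothesis on the rigging; as literally stated for a single arbitrary $\cM$ it requires that caveat, and you should note this when writing up rather than trying to force the step through.
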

\noindent We say that a q.m. $\cN\subseteq\cB$ is \emph{inducible via $\gX$} if (\ref{eq_indcbrig}) is satisfied. 
The analogue of Proposition \ref{prop_indres} can be also proved in this context.

\begin{exam}
Let $\cB\subseteq\cA$ be $*$-algebras such that $\Un{\cA}=\Un{\cB}$ and let $p:\cA\to\cB$ be a bimodule projection. 
Then $\gX=\cA$ has a structure of an $\cA$-$\cB$-bimodule defined by left and right multiplication respectively. 
For $x,y\in\cA$ put $\langle x, y\rangle_\cB:=p(y^*x).$ It is easily verified, that $\gX$ becomes a right $\cB$-rigged left $\cA$-module. 
The functors $\Res^\gX$ and $\Ind^\gX$ coincide with $\Res^p$ and $\Ind^p$ respectively. The notion of inducibility via $\gX$ coincides with inducibility via $p.$
\end{exam}

\begin{exam}
Let $\cA$ be a $*$-algebra over $\dC$ and let $\pi$ be a $*$-re\-pre\-sen\-ta\-ti\-on of $\cA$ on an inner-product space $\cV.$ 
Then $\cV$ together with $\dC$-valued inner-product $\langle\cdot,\cdot\rangle$ becomes a right $\dC$-rigged left $\cA$-module, 
where $a\cdot v=\pi(a)v$ and $v\cdot\lambda=\lambda v,$ for $a\in\cA,\ v\in\cV,\ \lambda\in\dC.$ By definition of $\Ind^\gX$ we have
$$
\Ind^\gX\dR_+=QM_\cA(\pi).
$$
\end{exam}

\begin{exam}\label{exam_action}
Let $G$ be a discrete group and $\cA=\oplus_{g\in G}\cA_g$ be a $G$-graded $*$-algebra. Every subspace $\cA_g\subseteq\cA$ has a natural structure of a right $\cB$-rigged left $\cB$-module where $\langle x,y\rangle_\cB:=y^*x,\ x,y\in\cA_g,$ see also \cite{fd}. Denote by $\cBp$ the set of proper q.m. $\cN\subseteq\cB$ inducible via $p$ such that $\Ind^p\cN$ is again proper.
For $\cN\in\cBp$ we say that ${}^g\!\cN$ is defined if $\Ind^{\cA_g}\cN$ is a proper q.m. in $\cB$. If ${}^g\!\cN$ is defined we put ${}^g\!\cN=\Ind^{\cA_g}\cN.$ It can be easily checked that ${}^g\!\cN\in\cBp.$ Let $\cN\in\cBp,$ then
$$\Ind\cN\cap\cB=\Res(\Ind\cN)=\bigcap {}^g \! \cN,$$
where the intersection is taken over all $g\in G$ such that ${}^g \! \cN$ is defined, cf. Proposition \ref{prop_graded}, (i).
\end{exam}

\begin{exam}
We illustrate the ``action'' defined in Example \ref{exam_action} on the Weyl algebra. We retain the notation from Section \ref{sect_weyl}.

Let $\lambda \in \dN_0$ and $k \in \dZ.$ We claim that ${}^k(\cN_\lambda)$ is defined and equal to $\cN_{\lambda-k}$ if and only if $\lambda\geq k.$ Namely, ${}^k(\cN_\lambda) = \{f(N) \in \cB_h \mid e_k^\ast e_k f(N-k) \in \cN_\lambda\}=
\{f(N) \in \cB_h \mid (e_k^\ast e_k)_{N=\lambda} f(\lambda-k) \ge 0\}$. The claim now follows from
the fact that
$(e_k^\ast e_k)_{N=\lambda}>0$ if $k\le \lambda$ and $(e_k^\ast e_k)_{N=\lambda}=0$ if $k>\lambda$.

Write $\cN_\infty$ for the set of all polynomials in $\cB_h = \RR[N]$ which have nonnegative leading coefficient
(i.e. they have strictly positive leading coefficient or they are identically zero).
Clearly $\cN_\infty$ is a quadratic module which contains all $e_k^\ast e_k$, $k \in \ZZ$,
hence $\cN_\infty$ is inducible. For every $k \in \ZZ$, we have that
$$
{}^k (\cN_\infty)=\cN_\infty.
$$
It follows that
$$
\Res(\Ind(\cN_\infty))=\bigcap_{k \in \ZZ}{}^k(\cN_\infty)=\cN_\infty.
$$

\end{exam}

\bibliographystyle{amsalpha}

\end{document}